\newtheorem{theorem}{Theorem}[section] 
\newtheorem{proposition}[theorem]{Proposition} 
\newtheorem{lemma}[theorem]{Lemma} 
\theoremstyle{definition} 
\newtheorem{definition}[theorem]{Definition} 
\newtheorem{example}[theorem]{Example}
\newtheorem{remark}[theorem]{Remark} 
\newcommand{\CC}{{\mathbb C}} 
\newcommand{\NN}{{\mathbb N}}
\newcommand{\RR}{{\mathbb R}} 
\newcommand{\cA}{{\mathcal A}} 
\newcommand{\cB}{{\mathcal B}} 
\newcommand{\cC}{{\mathcal C}} 
\newcommand{\cE}{{\mathcal E}} 
\newcommand{\cF}{{\mathcal F}} 
\newcommand{\cG}{{\mathcal G}} 
\newcommand{\cH}{{\mathcal H}} 
\newcommand{\cI}{{\mathcal I}}
\newcommand{\cJ}{{\mathcal J}} 
\newcommand{\cK}{{\mathcal K}} 
\newcommand{\cL}{{\mathcal L}} 
\newcommand{\cM}{{\mathcal M}}
\newcommand{\cS}{{\mathcal S}} 
\newcommand{\cU}{{\mathcal U}} 
\newcommand{\cV}{{\mathcal V}} 
\newcommand{\cW}{{\mathcal W}} 
\newcommand{\cX}{{\mathcal X}}
\newcommand{\cY}{{\mathcal Y}}
\newcommand{\bH}{\mathbf{H}}
\newcommand{\fC}{\mathfrak{C}}
\newcommand{\fH}{\mathfrak{H}}
\newcommand{\fM}{\mathfrak{M}}
\newcommand{\dom}{\operatorname{Dom}}
\newcommand{\ran}{\operatorname{Ran}} 
\newcommand{\ra}{\rightarrow}
\let\phi=\varphi 
\newcommand{\iac}{\mathrm{i}}
\newcommand{\de}{\mathrm{d}}
\newcommand{\supp}{\operatorname{supp}}
\newcommand{\Lloc}{\cB_{\mathrm{loc}}}
\newcommand{\Lcoh}{\cL_{\mathrm{coh}}}
\newcommand{\essran}{\operatorname{ess-ran}}
\newcommand{\esssup}{\operatorname{ess-sup}}
\newcommand{\nr}[1]{\vspace{0.1ex}\noindent\hspace*{12mm}\llap{\textup{(#1)}}} 
\begin{document} 
\title[Representing Locally Hilbert Spaces]{Representing Locally Hilbert Spaces and Functional Models for Locally Normal Operators}

 \date{\today}
  
\author[A. Gheondea]{Aurelian Gheondea} 
\address{Institutul de Matematic\u a al Academiei Rom\^ane, Calea Grivi\c tei 21,  010702 Bucure\c sti, Rom\^ania \emph{and}
Department of Mathematics, Bilkent University, 06800 Bilkent, Ankara, 
Turkey} 
\email{A.Gheondea@imar.ro \textrm{and} aurelian@fen.bilkent.edu.tr} 

\begin{abstract} The aim of this article is to explore in all remaining aspects the spectral theory of locally 
normal operators. In a previous article we proved the spectral theorem in terms of locally spectral measures. 
Here we prove the spectral theorem in terms of projective limits of certain 
multiplication operators with locally $L^\infty$ functions. In order to do this, 
we first investigate strictly inductive systems of measure spaces and point out the concept of representing 
locally Hilbert space for which we obtain a functional model as a strictly inductive limit of 
$L^2$ type spaces.
Then, we first obtain a functional model for locally normal operators on representing locally Hilbert spaces 
combined with a spectral 
multiplicity model on a pseudo-concrete functional model for the underlying locally Hilbert space, under the 
technical condition on the directed set to be sequentially finite.
Finally, under the same technical condition on the directed set, we 
derive the spectral theorem for locally normal operators in terms of projective limits of certain 
multiplication operators with locally  $L^\infty$ functions. As a consequence of 
the main result we sketch the direct integral representation of locally normal operators under the same technical 
assumptions and the separability of the locally Hilbert space. Examples of 
strictly inductive systems of measure spaces involving the Hata tree-like self-similar set, which justify the 
technical condition on a relevant case and which may open a connection with analysis on fractal sets, 
are included.
\end{abstract} 

\subjclass[2010]{Primary 46M40; Secondary  47B15, 47L60, 46C50, 46K05}
\keywords{Locally Hilbert space, inductive limit of measure spaces, locally $C^*$-algebra, 
locally normal operator, functional model, Hata tree-like self-similar set.}
\maketitle 

\section*{Introduction}

This article is aimed at complete 
the study of the spectral theory of locally normal operators as a continuation
of our investigations from \cite{Gheondea1}, \cite{Gheondea2}, 
and \cite{Gheondea3} on operator theory on locally Hilbert spaces. 
On the one hand, these investigations are partially motivated  by the theory 
of certain locally convex $*$-algebras which was initiated by G.~Allan \cite{Allan}, C.~Apostol \cite{Apostol}, 
A.~Inoue in \cite{Inoue}, and K.~Schm\"udgen \cite{Schmudgen}, and continued 
by N.C.~Phillips \cite{Phillips},  and by the theory of Hilbert modules over locally convex 
$*$-algebras initiated by the works of A.~Mallios \cite{Mallios} and D.V.~Voiculescu \cite{Voiculescu}. On the 
other hand, a strong motivation comes from
the analysis on fractal sets, cf.\ J.~Kigami \cite{Kigami}, differential equations on fractal sets, cf.\ 
R.S.~Strichartz \cite{Strichartz}, and spectral theory on the Hata tree-like self-similar set, cf.\ A.~Brzoska
\cite{Brzoska}. More precisely, following J.~Kigami \cite{Kigami},
one of the main question in the theory of self-similar sets in the sense of J.E.~Hutchinson \cite{Hutchinson}, 
which are the most studied fractal type sets, is the definition and the spectral properties of the Laplace operator 
on such a set. The first challenging question is the definition of the Laplace operator on a given self-similar set 
$X$. There are at least two approaches to this problem. The first was the probabilistic approach which, roughly 
speaking, consists in taking a sequence of random walks on the graphs which approximate $X$ and show that 
those random walks converge to a diffusion process on $X$. The second was the analytic approach which, 
roughly speaking, means that one has to study the structures of harmonic functions, Green's functions and 
Dirichlet forms, solutions of Poisson's equations. Both ways are tedious and need a long sequence of results 
to be proven, see \cite{Kigami} and the rich bibliography cited there. Our hope is that 
the study of operators on locally Hilbert spaces will open a new method to approach these problems. In this 
article we only considered an example generated by the Hata tree-like self-similar set. This example is used to 
show a couple of variants of organising it as a strictly inductive system of measure spaces and justifies the 
technical condition in the main theorems.

Previously we obtained an operator model for locally Hilbert $C^*$-modules in \cite{Gheondea2} and then 
studied
locally Hilbert spaces from the topological point of view in \cite{Gheondea1}. From a different point of view,
operator theory in locally Hilbert spaces was considered by A.A.~Dosiev \cite{Dosi1}, who called them 
quantised domains of Hilbert spaces, and
by D.J.~Karia and Y.M.~Parmar in \cite{KariaParmar}. In \cite{Gheondea3} we 
obtained the First Spectral Theorem, that is, in terms of spectral measures, 
and the Borel Functional Calculus Theorem for locally normal operators. For the special case of sequentially 
ordered locally Hilbert spaces, an analysis of partially defined locally operators has been performed by 
E.-C.~Cismas in \cite{Cismas}.
In this article we obtain a generalisation of the Second Spectral Theorem for normal operators, that is, in the
functional model representation, see J.B.~Conway \cite{Conway}, for the case of locally normal operators.  

Our main results are Theorem~\ref{t:st2} and Theorem~\ref{t:st3} in which we obtain two variants of the 
Second Spectral Theorem on locally Hilbert spaces, in the sense of multiplication operators with locally 
$L^\infty$ functions, under the condition on the directed set to be sequentially finite. 
From here, we derive in Theorem~\ref{t:st4} the Third Spectral Theorem for locally normal operators, that is, in 
terms of direct integrals of locally Hilbert spaces, under the same conditions on the directed set and the 
separability of the locally Hilbert space. Locally normal operators generate Abelian locally 
von Neumann algebras, which are operator algebra types of locally $W^*$-algebras, see
M.~Fragoulopoulou, \cite{Fragoulopoulou} and M.~Joi\c ta, \cite{Joita1}, \cite{Joita2}. Also, A.A.~Dosi in 
\cite{Dosi3} made a study of Abelian $W^*$-algebras. Recently, C.J. Kulkarni and S.K. Pamula 
\cite{KulkarniPamula} considered direct integral representations of a special type of Abelian locally von Neumann 
algebras.  None of these results cover our main theorems.
Our approach for proving Theorem~\ref{t:st2} and Theorem~\ref{t:st3} 
is made by considering strictly inductive systems of measure spaces and points out 
the importance of representing locally Hilbert space, which are called commutative quantised domains of Hilbert 
spaces in \cite{Dosi1}, for which we obtain a functional model as a strictly 
inductive limit of $L^2$ type spaces. Then, the functional model of locally normal operators is obtained
as multiplication operators with locally $L^\infty$ type functions, in a sense which we make precise, by exploiting 
the structure of Abelian von Neumann algebras. In the following we briefly describe the contents of this article.

We briefly 
review in Section~\ref{s:ipl} the basics on inductive and projective limits to a degree of generality which is 
required by this article, a more general point of view compared to what we did in \cite{Gheondea1} 
and \cite{Gheondea2}. 
One of our main concerns is to make a clear distinguish between projective limits,
which have good properties, and inductive limits, which miss some of the good properties, and somehow 
motivates the limitation on using the latter in a greater generality, e.g.\ see Y.~Komura
\cite{Komura}. In addition, we point out the importance of
coherence, when approaching linear maps between inductive or projective limits. Although this tool is 
considered to be classical mathematics and scattered in many textbooks or monographs, e.g.\ see 
A.~Grothendieck \cite{Grothendieck} and G.~K\"othe \cite{Kothe}, 
the generality that we use in this article is seldom encountered, e.g.\ see J.L. Taylor's lecture notes 
\cite{Taylor}. In addition, in this article we need a more general approach to inductive limits and projective limits, 
in the sense of categories, see N. Bourbaki \cite{Bourbaki} and S.~Mac Lane \cite{MacLane}, but in a more 
concrete fashion, that starting with inductive systems and projective systems. 

Section~\ref{s:lhs} starts by recalling a few basic things about locally Hilbert spaces and their topological and 
geometrical properties from \cite{Gheondea1}. 
Subsection~\ref{ss:lbo} starts by reviewing the basic definitions and facts on locally bounded operators.  Two of the most useful facts that we 
get here are Proposition~\ref{p:lbo2}, 
which shows when and how we can assemble a locally bounded operator
from its many pieces, and Proposition~\ref{p:mat} which provides a block-matrix characterisation of locally 
bounded operators in terms of any fixed component. After recalling the basic terminology and facts on locally
$C^*$-algebras in Subsection~\ref{ss:lcsa}, we provide two relevant examples, one in terms of continuous
functions and the other in terms of locally $L^\infty$-functions. 

Section~\ref{s:rlhs} is meant to reformulate the definition of locally Hilbert spaces with one 
more property, namely that of commutation of the orthogonal projections onto the 
components $\cH_\lambda$, from which we get the concept of a representing locally 
Hilbert space. This concept is firstly justified by the fact that, in the original proof of Inoue \cite{Inoue} of 
the Gelfand-Naimark's Theorem, this property is implicit. This fact was observed and used for other purposes 
in \cite{Dosi2} as well. Our goal is to obtain a rather general functional 
model for locally Hilbert spaces and for this reason we consider strictly inductive systems of measurable spaces 
and strictly inductive systems of measure spaces, in connection with their inductive limits. 
Thus, this additional property of representing holds for the case of 
strictly inductive systems of measure spaces and hence, it makes the first step in the direction of functional 
models and, in the same time, provides a second justification for introducing the concept. 
The results presented in 
subsections \ref{ss:sisms} and \ref{ss:sismes} are related to previous investigations on inductive and projective 
systems of measure spaces as in N.~Dinculeanu~\cite{Dinculeanu}, Z.~Frolik~\cite{Frolik}, and 
N.D.~Macheras~\cite{Macheras}, as well as in the recent preprint \cite{KulkarniPamula}, but we have a more 
specific framework which should be made clear. Here the main result is contained in Proposition~\ref{p:eltwo} which 
says that the locally Hilbert space
which is the inductive limit of $L^2(X_\lambda,\mu_\lambda)$, for a strictly inductive system of measure spaces
$((X_\lambda,\Omega_\lambda,\mu_\lambda))_{\lambda\in\Lambda}$, can be completed to a concrete
Hilbert space $L^2(X,\widetilde\mu)$, for a measure space $(X,\widetilde\Omega,\widetilde\mu)$ that is
precisely described in Proposition~\ref{p:wimu}. 

As we mentioned before, our more general objective is to relate spectral theory on locally Hilbert spaces with the 
theory of analysis on self-similar (fractal) sets. For this reason, three relevant examples of strictly inductive 
systems of measure spaces obtained from the Hata tree-like self-similar set are presented in 
Subsection~\ref{ss:hata}. 
In order to do this, we explore with more details the topological structure of this self-similar set. In addition, the 
three variants of organising the Hata tree-like self-similar set as an inductive limit of a strictly inductive system of 
measure spaces share certain extra properties that are used to show that the technical condition on the directed 
set that we impose in the main results are natural and useful in applications. This direction of research is 
susceptible to be followed in view of a spectral theory on self-similar sets by means of locally Hilbert spaces.

In Section~\ref{s:st} we consider the basic concepts, like spectrum and resolvent sets. 
Subsection~\ref{ss:lno} is dedicated to a first encounter of locally normal operators and their basic properties. In 
Example~\ref{ex:lnop} we show that our previous efforts to develop functional models prove worthwhile,
allowing us to obtain the analogue, in its local version, of the multiplication operators with $L^\infty$ functions on 
$L^2$ type spaces. This clarifies what kind of functional model for locally normal operators we 
are looking for. Then we briefly review the First Spectral Theorem for locally normal operators, that is, in terms of 
spectral measures, previously obtained in \cite{Gheondea3}. An important fact is Example~\ref{ex:psconc} which 
shows a functional model that contains information on the spectral multiplicity and points out a one-sided 
connection between the First Spectral Theorem and the Second Spectral Theorem.

We reach our goal in two steps. First, under the technical condition on the
directed set to be sequentially finite,
in Theorem~\ref{t:st2} we show that any locally normal operator on a representing locally Hilbert space
is unitarily equivalent to a projective limit of operators of multiplication by a locally $L^\infty$ function on a 
representing pseudo-concrete locally Hilbert space, which contains the spectral multiplicity aspect as well. The 
technical tool here is the general structure theory of commutative von Neumann algebras, where the fact that 
the locally Hilbert space is representing is essential. The technical condition on the directed set to be 
sequentially finite is used in order to allow us to
use mathematical induction in the construction of the strictly inductive system of measure spaces. 
Then, in Theorem~\ref{t:st3} we transfer this into a genuine functional model on a concrete representing 
locally Hilbert space. In Subsection~\ref{ss:ttst} we sketch the direct integral representation, for locally normal 
operators, which is usually called the Third Spectral Theorem, as a consequence of 
Theorem~\ref{t:st2}, hence under the same assumptions, and, in addition, under the assumption of
separability of all Hilbert spaces $\cH_\lambda$. In Subsection~\ref{ss:last} we make some 
final comments on the relations between the three spectral theorems on locally normal operators that we present 
in this article, which are quite different when compared to the classical spectral theorems on normal operators, and further research questions that these ideas may trigger.
\medskip

\textbf{Acknowledgements.} We thank Daniel Belti\c t\u a for many conversations on the topics of this manuscript 
and useful observations. Thanks are also due to L\'aszlo Zsid\'o for an exchange of messages 
on Theorem~\ref{t:abvn} and to Chaitanya J.~Kulkarni and Santhosh Kumar Pamula for very useful 
observations which improved the form of Lemma~\ref{l:letexem}. 
During the Spring semester in 2024 I enjoyed the supervision of 
a Senior Project of Ziya Ka\u gan  Akman on analysis on self-similar sets and this experience was useful for me 
in Subsection~\ref{ss:hata}.   

\section{Inductive and Projective Limits}\label{s:ipl}

In this section and throughout this article, we use inductive limits, projective limits, and coherent maps. 
We briefly recall the notation, terminology, and some basic facts for the reader's convenience. In the following
we fix a category $\fC$.

\subsection{Projective Limits.}\label{ss:pllcs}
A  \emph{preorder} on a nonempty set $\Lambda$ is a relation, usually denoted by $\leq$, 
on $\Lambda$ that is reflexive and transitive. A \emph{directed set} (sometimes called \emph{upward directed set})
is a nonempty set $\Lambda$ 
together with a preorder $\leq$ on it with the property that for any $\lambda,\mu\in\Lambda$ there exists 
$\epsilon\in\Lambda$ such that $\lambda\leq \epsilon$ and $\mu\leq\epsilon$. If $\Lambda$ is a directed set,
a family $(O_\lambda)_{\lambda\in\Lambda}$ will be called a \emph{net} indexed on $\Lambda$.

A \emph{projective system} in the category $\fC$ is a pair 
$((\cV_\alpha)_{\alpha\in A};(\phi_{\alpha,\beta})_{\alpha\leq\beta})$
subject to the following properties.
\begin{itemize}
\item[(ps1)] $(A;\leq)$ is a directed set.
\item[(ps2)] $(\cV_\alpha)_{\alpha\in A}$ is a net of objects in the category $\fC$.
\item[(ps3)] $\{\phi_{\alpha,\beta}\mid \phi_{\alpha,\beta}\colon 
\cV_\beta\ra \cV_\alpha,\ \alpha,\beta\in A,\ \alpha\leq\beta\}$ is a family of morphisms in the category $\fC$ 
such that $\phi_{\alpha,\alpha}$ is the identity morphism on $\cV_\alpha$, for all $\alpha\in A$.
\item[(ps4)] The following {transitivity} condition holds
\begin{equation}\phi_{\alpha,\gamma}
=\phi_{\alpha,\beta}\circ\phi_{\beta,\gamma},\mbox{ for all } \alpha,\beta,\gamma\in A,
\mbox{ such that } \alpha\leq\beta\leq\gamma.
\end{equation}
\end{itemize}

Given a projective system $((\cV_\alpha)_{\alpha\in A};(\phi_{\alpha,\beta})_{\alpha\leq\beta})$ in the category 
$\fC$, a \emph{projective limit} associated to it is a pair $(\cV,(\phi_\alpha)_{\alpha\in A}$, where $\cV$ is an 
object of $\fC$ and $\phi_\alpha\colon \cV\ra\cV_\alpha$, for all $\alpha\in A$, are morphisms in the category 
$\fC$, subject to the following conditions.

\begin{itemize}
\item[(pl1)] For any $\alpha,\beta\in A$ such that $\alpha\leq \beta$ we have 
$\phi_\alpha=\phi_{\alpha,\beta}\circ \phi_\beta$. (compatibility)
\item[(pl2)] For any object $\cU$ of $\fC$ and any net $(\psi_\alpha)_{\alpha\in A}$, $\psi_\alpha\colon \cU\ra
\cV_\alpha$, of morphisms in $\fC$, such that the compatibility condition
\begin{equation*}
\psi_\alpha=\phi_{\alpha,\beta}\circ \psi_\beta,\mbox{ for all }\alpha\leq\beta,
\end{equation*} holds,
there exists a unique morphism $\psi \colon \cU\ra \cV$ such that
$\phi_\alpha\circ \psi=\psi_\alpha$, for all $\alpha\in A$. (universality)
\end{itemize}
The projective limit may or may not exist in the given category but, due to universality, if exists it is unique up to 
an isomorphism in $\fC$.

For the special category of locally convex spaces, 
the projective limit of a projective system always exists and, if all the morphisms are  Hausdorff locally convex 
spaces  then the projective limit is always
Hausdorff and, if all locally convex spaces of the projective system are complete, then the projective limit 
is complete, e.g.\ see \cite{Taylor}.

\subsection{Inductive Limits.}\label{ss:illcs}
An \emph{inductive system} in the category $\fC$ is a pair 
$((\cX_\alpha)_{\alpha\in A};(\chi_{\beta,\alpha})_{\alpha\leq\beta})$ subject to the following conditions.
\begin{itemize}
\item[(is1)] $(A;\leq)$ is a directed set. 
\item[(is2)] $(\cX_\alpha)_{\alpha\in A}$ is a net of objects of $\fC$.
\item[(is3)] $\{\chi_{\beta,\alpha}\colon\cX_\alpha\ra\cX_\beta\mid \alpha,\beta\in A,\ \alpha\leq\beta\}$ is a family of morphisms such that $\chi_{\alpha,\alpha}$ is the identity morphism on $\cX_\alpha$, for all $\alpha\in A$.
\item[(is4)] The following transitivity condition holds.
\begin{equation}\chi_{\delta,\alpha}=\chi_{\delta,\beta}\circ\chi_{\beta,\alpha},\mbox{ for all } \alpha,
\beta,\gamma\in A\mbox{ with }
\alpha\leq\beta\leq\delta.
\end{equation}
\end{itemize}

Given an inductive system $((\cX_\alpha)_{\alpha\in A};(\chi_{\beta,\alpha})_{\alpha\leq\beta})$ in the category 
$\fC$, an \emph{inductive limit} in the category $\fC$ is a pair $(\cX,(\chi_\alpha)_{\alpha\in A})$, where $\cX$
is an object in the category $\fC$ and $\chi_\alpha\colon \cX_\alpha\ra \cX$ are morphisms in the category 
$\fC$, subject to the following conditions.

\begin{itemize} 
\item[(il1)] For all $\alpha,\beta\in A$ such that $\alpha\leq \beta$ we have  
$\chi_\beta\circ \chi_{\beta,\alpha}=\chi_\alpha$. (compatibility)
\item[(il2)] For any object $\cY\in\fC$ and any net $\kappa_\alpha\colon \cX_\alpha\ra \cY$,  $\alpha\in A$, 
of morphisms in $\cF$ such that the following compatibility condition
\begin{equation*}
\kappa_\beta\circ \chi_{\beta,\alpha}=\kappa_\alpha,\mbox{ for all }\alpha\leq\beta,
\end{equation*} holds,
there exists a unique morphism $\kappa\colon \cX\ra\cY$ such that $\kappa\circ \chi_\alpha=\kappa_\alpha$ 
for all $\alpha\in A$. (universality)
\end{itemize}

An inductive limit associated to an inductive system in a given category $\fC$ may or may not exist but, due to
the universality property, if it exists it is unique up to an isomorphism in the category $\fC$. However,
there are some important differences between inductive limits and projective limits. For example, a case that 
interests us in this article, in the category of locally convex spaces, although the inductive limit exists,
under the assumption that all 
locally convex spaces $\cX_\alpha$, $\alpha\in A$ are Hausdorff, 
the inductive limit topology may
not be Hausdorff, unless a certain subspace $\cX_0$ is closed in $\bigoplus_{\alpha\in A}\cX_\beta$.
Also, in general, the inductive limit of an inductive system of complete locally convex spaces
may not be complete. See \cite{Taylor} for the construction and \cite{Komura} for some anomalies.

\subsection{Coherent Maps between Inductive Limits.}\label{ss:cm}
We consider the category of locally convex spaces.
Let $(\cX;(\chi_\alpha)_{\alpha\in A})$, $\cX=\varinjlim_{\alpha\in A}\cX_\alpha$, and 
$(\cY;(\kappa_\alpha)_{\alpha\in A})$, $\cY=\varinjlim_{\alpha\in A}\cY_\alpha$, 
be two inductive limits of locally convex spaces indexed by the same directed set $A$. In addition, we 
assume that all canonical maps $\chi_\alpha$ and $\kappa_\alpha$ are injective.
 
A linear map $g\colon \cX\ra\cY$ is called
\emph{coherent} if
\begin{itemize}
\item[(cim)] There exists $(g_\alpha)_{\alpha\in A}$ a net of linear maps 
$g_\alpha\colon\cX_\alpha\ra\cY_\alpha$, $\alpha\in A$, such that 
$g\circ\chi_\alpha= \kappa_\alpha\circ g_\alpha$ for all $\alpha\in A$.
\end{itemize}
In terms of the underlying inductive systems 
$((\cX_\alpha)_{\alpha\in A};(\chi_{\beta,\alpha})_{\alpha\leq\beta})$ and 
$((\cY_\alpha)_{\alpha\in A};(\kappa_{\beta,\alpha})_{\alpha\leq\beta})$, (cim) is equivalent with
\begin{itemize}
\item[(cim)$^\prime$] There exists $(g_\alpha)_{\alpha\in A}$ a net of linear maps 
$g_\alpha\colon\cX_\alpha\ra\cY_\alpha$, $\alpha\in A$, such that $\kappa_{\beta,\alpha}\circ
g_\alpha=g_\beta\circ \chi_{\beta,\alpha}$, for all $\alpha,\beta\in A$ with $\alpha\leq\beta$.
\end{itemize}
There is a one-to-one correspondence between the class of all coherent linear maps 
$g\colon\cX\ra\cY$ and the class of all nets $(g_\alpha)_{\alpha\in A}$ as in (cim) or, 
equivalently, as in (cim)$^\prime$.

Let $\Lcoh(\cX,\cY)$ denote the set of all coherent linear maps $g\colon\cX\ra\cY$. It is easy to see
that $\Lcoh(\cX,\cY)$ is a linear space: if $f,g\in\Lcoh(\cX,\cY)$ corresponds by (cim) to the nets 
$(f_\alpha)_{\alpha\in A}$ and, respectively, $(g_\alpha)_{\alpha\in A}$, and $s,t\in\CC$, then 
$sf+tg$ is a coherent linear map corresponding to the net $(sf_\alpha+tg_\alpha)_{\alpha\in A}$.  
For each $\beta\in A$ let $\pi_\beta\colon\Lcoh(\cX,\cY)\ra \cL(\cX_\beta,\cY_\beta)$ be defined 
by $\pi_\beta(f)=f_\beta$, where $f\in\Lcoh(\cX,\cY)$ corresponds by (cim) to the net 
$(f_\alpha)_{\alpha\in A}$. Since $\pi_\beta$ is linear, its range $\pi_\beta(\Lcoh(\cX,\cY))$ is 
a linear subspace of $\cL(\cX,\cY)$.  
Then $\Lcoh(\cX,\cY)$ is naturally identified with the projective limit 
$\varprojlim_{\alpha\in A}\pi_\alpha(\Lcoh(\cX,\cY))$, where the canonical projections 
are the linear maps $\pi_\beta\colon\Lcoh(\cX,\cY)\ra\pi_\beta(\Lcoh(\cX,\cY))$. For this reason,
if $f\in\Lcoh(\cX,\cY)$ corresponds by (cim) to the net $(f_\alpha)_{\alpha\in A}$, we use the 
notation
\begin{equation}\label{e:cohproj} f=\varprojlim_{\alpha\in A}f_\alpha.
\end{equation}

\subsection{Coherent Maps between Projective Limits}\label{ss:cmpl}
We consider again the category of locally convex spaces.
Let $(\cV;(\phi_\alpha)_{\alpha\in A})$, $\cV=\varprojlim_{\alpha\in A}\cV_\alpha$, and 
$(\cW;(\psi_\alpha)_{\alpha\in A})$, $\cW=\varprojlim_{\alpha\in A}\cW_\alpha$, 
be two projective limits of locally convex spaces indexed by the same directed set $A$. 
A linear map $f\colon \cV\ra\cW$ is called \emph{coherent} if
\begin{itemize}
\item[(cpm)] There exists $(f_\alpha)_{\alpha\in A}$ a net of linear maps $f_\alpha\colon\cV_\alpha\ra\cW_\alpha$, $\alpha\in A$, such that $\psi_\alpha\circ f=f_\alpha\circ \phi_\alpha$ for all
$\alpha\in A$.
\end{itemize}
In terms of the underlying projective systems 
$((\cV_\alpha)_{\alpha\in A};(\phi_{\alpha,\beta})_{\alpha\leq\beta})$ and 
$((\cW_\alpha)_{\alpha\in A};(\psi_{\alpha,\beta})_{\alpha\leq\beta})$, (cpm) is equivalent with
\begin{itemize}
\item[(cpm)$^\prime$] There exists $(f_\alpha)_{\alpha\in A}$ a net of linear maps 
$f_\alpha\colon\cV_\alpha\ra\cW_\alpha$, $\alpha\in A$, such that $\psi_{\alpha,\beta}\circ
f_\beta=f_\alpha\circ \phi_{\alpha,\beta}$, for all $\alpha,\beta\in A$ with $\alpha\leq\beta$.
\end{itemize}
There is a one-to-one correspondence between the class of all coherent linear maps 
$f\colon\cV\ra\cW$ and the class of all nets $(f_\alpha)_{\alpha\in A}$ as in (cpm) or, 
equivalently, as in (cpm)$^\prime$.

Let $\cL_\mathrm{coh}(\cV,\cW)$ denote the set of all coherent linear maps $f\colon \cV\ra \cW$. Clearly,
$\cL_\mathrm{coh}(\cV,\cW)$ is a linear space and, for any $f\in\cL_\mathrm{coh}(\cV,\cW)$, if $(f_\alpha)_{\alpha\in A}$ is associated to $f$ as in (cpm), we denote this by $f=\varprojlim_{\alpha\in A}f_\alpha$.
For any $f=\varprojlim_{\alpha\in A}f_\alpha\in\cL_{\mathrm{coh}}(\cV,\cW)$, we have that $f$ is
continuous with respect to the projective limit topologies on $\cV$ and respectively $\cW$,
if and only if $f_\alpha\colon\cV_\alpha\ra\cW_\alpha$ is continuous for all $\alpha\in A$.

\section{Locally Hilbert Spaces and Their Linear Operators}\label{s:lhs}

Locally Hilbert spaces have implicitly shown up in \cite{Inoue} and, from 
a different perspective, they were called quantised domains in Hilbert spaces in \cite{Dosi1}. 

\subsection{General Properties}\label{ss:gp}
A \emph{locally Hilbert space} is an \emph{inductive limit}
\begin{equation}\label{e:injlim} 
\cH=\varinjlim\limits_{\lambda\in\Lambda}\cH_\lambda=\bigcup_{\lambda\in\Lambda}
\cH_\lambda,
\end{equation} of a 
\emph{strictly inductive system of 
Hilbert spaces} $(\cH_\lambda)_{\lambda\in
\Lambda}$, that is,
\begin{itemize}
\item[(lhs1)] $(\Lambda,\leq)$ is a directed set; 
\item[(lhs2)] $(\cH_\lambda, \langle\cdot,\cdot\rangle_{\cH_\lambda})_{\lambda\in\Lambda}$ 
is a net of Hilbert spaces;
\item[(lhs3)] for each $\lambda,\mu\in\Lambda$ with $\lambda\leq \mu$  we have 
$\cH_\lambda\subseteq\cH_\mu$; 
\item[(lhs4)] for each $\lambda,\mu\in\Lambda$ with $\lambda\leq\mu$ the inclusion map 
$J_{\mu,\lambda}\colon \cH_\lambda\ra\cH_\mu$ is isometric, that is, 
\begin{equation}\langle x,y\rangle_{\cH_\lambda}=\langle x,y\rangle_{\cH_\mu},\mbox{ for all }
x,y\in\cH_\lambda.\end{equation}
\end{itemize} 
For each $\lambda\in\Lambda$, letting $J_\lambda\colon\cH_\lambda\ra\cH$ be the inclusion
of $\cH_\lambda$ in $\bigcup\limits_{\lambda\in\Lambda}\cH_\lambda$, the \emph{inductive limit
topology} on $\cH$ is the strongest which makes the linear maps $J_\lambda$ 
continuous for all $\lambda\in\Lambda$. 

On $\cH$ a canonical inner product $\langle\cdot,\cdot\rangle_\cH$ can be defined as follows: 
\begin{equation}\label{e:lip}
\langle h,k\rangle_\cH=\langle h,k\rangle_{\cH_\lambda},\quad h,k\in\cH,
\end{equation} where $\lambda\in\Lambda$ is any index for which $h,k\in\cH_\lambda$. 
With notation as before, it
follows that the definition of the inner product as in \eqref{e:lip}
is correct and, for each $\lambda\in\Lambda$, 
the inclusion map $J_\lambda\colon(\cH_\lambda,\langle\cdot,\cdot\rangle_{\cH_\lambda})\ra
(\cH,\langle\cdot,\cdot\rangle_{\cH})$ is isometric. This implies that, letting $\|\cdot\|_\cH$ 
denote the norm induced by the inner product $\langle\cdot,\cdot\rangle_\cH$ on $\cH$, the
\emph{norm topology} on $\cH$ is weaker than the inductive limit topology of $\cH$. 
Since the norm topology is Hausdorff, it follows that the inductive limit topology on $\cH$ 
is Hausdorff as well. In the following we let $\widetilde\cH$ denote the completion of the
inner product space $(\cH,\langle\cdot,\cdot\rangle)$ to a Hilbert space. 

\begin{remark}\label{r:lch} Let $\fH$ be the category of Hilbert spaces with morphisms isometric operators.
Given a strictly inductive system of Hilbert spaces $(\cH_\lambda)_{\lambda\in
\Lambda}$, the Hilbert space $\widetilde\cH$, which is the Hilbert space completion of $\cH$ defined at 
\eqref{e:injlim} endowed with the inner product defined at \eqref{e:lip}, 
is its injective limit in the category $\fH$. More precisely, 
let $j_\lambda\colon \cH_\lambda\hookrightarrow\widetilde\cH$, for $\lambda\in\Lambda$, 
be the embedding operator. The following statements hold.
\begin{itemize}
\item[(h1)] $\widetilde\cH\in\fH$.
\item[(h2)] For any $\nu\geq \lambda$ we have $j_\nu\circ J_{\nu,\lambda}=j_\lambda$.
\item[(h3)] For any Hilbert space $\cG$ and  any net $(\phi_\lambda)_{\lambda\in\Lambda}$, where 
$\phi_\lambda\colon \cH_\lambda\ra \cG$ is an isometry, for all $\lambda\in \Lambda$, there exists a unique 
isometry $\phi\colon \widetilde\cH\ra \cG$ such that $\phi\circ j_\lambda=\phi_\lambda$, for all 
$\lambda\in\Lambda$.
\end{itemize}
We make this observation in order to point out that the locally Hilbert space of a strictly inductive system of 
Hilbert spaces is the inductive limit in the larger category of locally convex spaces and not in the category of 
Hilbert spaces.
\end{remark}

In addition, on the locally Hilbert space $\cH$ 
we consider the \emph{weak topology} as well, that is, the locally convex 
topology induced by the family of seminorms $\cH\ni h\ra |\langle h,k\rangle|$, indexed by 
$k\in\cH$.

\begin{remark}\label{r:wp}
Clearly, the weak topology on any locally Hilbert space is Hausdorff separated as 
well. On the other hand, there 
is a weak topology on the Hilbert space $\widetilde\cH$, determined by all linear functionals 
$\widetilde\cH\ni h\mapsto \langle h,k\rangle$, for $k\in\widetilde\cH$, 
and this induces a topology on 
$\cH$, determined by all linear functionals $\cH\ni h\mapsto \langle h,k\rangle$, 
for $k\in\widetilde\cH$, different than the weak topology on $\cH$; in general, 
the weak topology of $\cH$ is weaker 
than the topology induced by the weak topology of $\widetilde\cH$ on $\cH$.
\end{remark}

For an arbitrary nonempty subset $\cS$ of a locally Hilbert space $\cH$ we denote, as usual, the 
\emph{orthogonal companion} of 
$\cS$ by $\cS^\perp=\{k\in\cH\mid \langle h,k\rangle=0\mbox{ for all }h\in\cH\}$. 
If $\cL$ is a subspace of $\cH$ and we denote
by $\overline{\cL}^\mathrm{w}$ its weak closure, then
$\cL^\perp$ is weakly closed and $\cL^\perp={\overline{\cL}^\mathrm{w}}^\perp$, cf.\ Lemma~2.2
in \cite{Gheondea1}.
In particular, a subspace $\cL$ of the  inner product space $\cH$ is weakly closed if and only if
$\cL=\cL^{\perp\perp}$, cf.\ Proposition~2.3 in \cite{Gheondea1}.

For two linear subspaces $\cS$ and $\cL$ of $\cH$, which are mutually orthogonal, denoted 
$\cS\perp \cL$,
we denote by $\cS\oplus\cL$ their algebraic sum. Also, a linear operator $T\colon \cH\ra\cH$
is called \emph{projection} if $T^2=T$ and \emph{Hermitian} if 
$\langle Th,k\rangle=\langle h,Tk\rangle$ for all $h,k\in\cH$. It is easy to see that any Hermitian 
projection $T$ is \emph{positive} in the sense $\langle Th,h\rangle\geq 0$ for all $h\in\cH$ 
and that $T$ is a Hermitian projection if and only if $I-T$ is the same.

For an arbitrary subspace $\cS$ of the locally Hilbert space $\cH$, the \emph{weak topology of} 
$\cS$ is defined as the locally convex topology generated by the family of seminorms $\cS\ni h
\mapsto |\langle h,k\rangle|$, indexed on $k\in\cS$.  The weak topology of $\cS$ is
weaker than the topology on $\cS$ induced by the weak topology of $\cH$.

On the other hand, a linear functional $\phi\colon 
\cS\ra\CC$ is continuous with respect to the weak topology of $\cS$ if and only if there exists a 
unique $k_\phi\in\cS$ such that $\phi(h)=\langle h,k_\phi\rangle$ for all $h\in\cS$, cf.\ 
Proposition~2.1 in \cite{Gheondea1}.

The next proposition provides equivalent characterisations for orthocomplementarity of subspaces
of a locally Hilbert space and it is actually more general, in the context of inner product spaces,
cf.\ Proposition~2.4 in \cite{Gheondea1}.
However, we state it as it is, since this is the only case when we use it.

\begin{proposition}\label{p:orthocomplement}
Let $\cS$ be a linear subspace of $\cH$. The following assertions are equivalent.
\begin{itemize}
\item[(i)] The weak topology of $\cS$ coincides with the topology induced on $\cS$
by the weak topology of $\cH$, in particular $\cS$ is weakly closed in $\cH$.
\item[(ii)] For each $h\in\cH$ the functional $\cS\ni y\mapsto \langle y,h\rangle$ is continuous with 
respect to the weak topology of $\cS$.
\item[(iii)] $\cH=\cS\oplus\cS^\perp$.
\item[(iv)] There exists a Hermitian projection $P\colon\cH\ra\cH$ such that $\ran(P)=\cS$.
\end{itemize}
\end{proposition}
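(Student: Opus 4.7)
The plan is to prove the chain of implications $(\mathrm{i})\Rightarrow(\mathrm{ii})\Rightarrow(\mathrm{iii})\Rightarrow(\mathrm{iv})\Rightarrow(\mathrm{iii})\Rightarrow(\mathrm{i})$, arranging the arguments so that the characterisation of weakly continuous functionals on an inner product subspace (Proposition~2.1 in \cite{Gheondea1}, quoted just above the statement) bears the main technical weight.

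First I would handle $(\mathrm{i})\Rightarrow(\mathrm{ii})$, which is essentially immediate: for every $h\in\cH$ the functional $\cH\ni y\mapsto\langle y,h\rangle$ is, by definition, continuous in the weak topology of $\cH$, so its restriction to $\cS$ is continuous in the topology induced from the weak topology of $\cH$, and by (i) that induced topology coincides with the weak topology of $\cS$. For $(\mathrm{ii})\Rightarrow(\mathrm{iii})$ I would fix $h\in\cH$ and apply Proposition~2.1 of \cite{Gheondea1} to the weakly continuous functional $\cS\ni y\mapsto\langle y,h\rangle$ to obtain a unique $k_h\in\cS$ with $\langle y,h\rangle=\langle y,k_h\rangle$ for all $y\in\cS$; then $h-k_h\in\cS^\perp$ by construction, yielding $h=k_h+(h-k_h)\in\cS+\cS^\perp$, and $\cS\cap\cS^\perp=\{0\}$ is automatic from positive definiteness of $\langle\cdot,\cdot\rangle$, so the sum is algebraically direct.

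For $(\mathrm{iii})\Rightarrow(\mathrm{iv})$ I would let $P$ be the algebraic projection onto $\cS$ along $\cS^\perp$ furnished by the direct sum decomposition, and check hermiticity by computing, for $h=s+t$ and $k=s'+t'$ with $s,s'\in\cS$ and $t,t'\in\cS^\perp$, that $\langle Ph,k\rangle=\langle s,s'\rangle=\langle h,Pk\rangle$ using $s\perp t'$ and $t\perp s'$. Conversely, for $(\mathrm{iv})\Rightarrow(\mathrm{iii})$, given a Hermitian projection $P$ with range $\cS$, I would verify that $\ran(I-P)\subseteq\cS^\perp$ using $P(I-P)=0$ together with hermiticity of $P$, which together with $h=Ph+(I-P)h$ and $\cS\cap\cS^\perp=\{0\}$ gives $\cH=\cS\oplus\cS^\perp$.

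Finally, to close the cycle with $(\mathrm{iii})\Rightarrow(\mathrm{i})$, I would observe that if $h=s+t$ with $s\in\cS$ and $t\in\cS^\perp$, then for every $y\in\cS$ we have $\langle y,h\rangle=\langle y,s\rangle$, so every seminorm on $\cS$ induced by the weak topology of $\cH$ is in fact one of the seminorms defining the weak topology of $\cS$; this gives the equality of topologies, and then $\cS=\cS^{\perp\perp}$ (which follows from the decomposition $\cH=\cS\oplus\cS^\perp$) together with the fact already recorded in the paper that $\cS^\perp$ is weakly closed shows that $\cS$ itself is weakly closed in $\cH$. The only delicate point is $(\mathrm{ii})\Rightarrow(\mathrm{iii})$, where one must invoke the representation theorem for weakly continuous functionals on the inner product space $\cS$ to produce the required element in $\cS$ rather than merely in $\widetilde{\cH}$; everything else is straightforward bookkeeping with the algebraic direct sum and the defining property of Hermitian projections.
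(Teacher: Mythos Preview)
Your proof is correct. The chain of implications is well-organised, and you correctly identify that the only nontrivial step is $(\mathrm{ii})\Rightarrow(\mathrm{iii})$, where the representation theorem for weakly continuous functionals on an inner product space (Proposition~2.1 in \cite{Gheondea1}) is exactly the right tool to produce an element of $\cS$ rather than of the completion. The remaining implications are indeed routine.

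Note, however, that the paper does not actually prove this proposition in the text: it merely states the result and refers to Proposition~2.4 in \cite{Gheondea1}, where the proof is given in the more general setting of inner product spaces. Your argument is presumably close in spirit to that original proof, since it uses precisely the auxiliary result (Proposition~2.1 of \cite{Gheondea1}) that the present paper quotes immediately before the statement; in any case, what you have written constitutes a complete and self-contained proof.
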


For a given locally Hilbert space $\cH=\varinjlim_{\lambda\in\Lambda}\cH_\lambda$, it is important
to understand the geometry of the components $\cH_\lambda$ and their orthogonal 
complements $\cH_\lambda^\perp$ within $\cH$. For the proof of the next lemma we refer
to Lemma~3.1 in \cite{Gheondea1}.

\begin{lemma}\label{l:perp}
For each $\lambda\in\Lambda$ we have $\cH=\cH_\lambda\oplus \cH_\lambda^\perp$, in particular there exists a unique Hermitian projection $P_\lambda\colon\cH\ra\cH$ such that $\ran(P_\lambda)=\cH_\lambda$.
\end{lemma}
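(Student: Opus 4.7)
The plan is to show the orthogonal decomposition $\cH=\cH_\lambda\oplus\cH_\lambda^\perp$ directly, and then invoke Proposition~\ref{p:orthocomplement} to obtain the Hermitian projection. The existence of $P_\lambda$ then follows from the equivalence (iii)$\Leftrightarrow$(iv) in that proposition.

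First, I would fix $\lambda\in\Lambda$ and an arbitrary $h\in\cH$. Since $\cH=\bigcup_{\mu\in\Lambda}\cH_\mu$, there exists $\mu\in\Lambda$ with $h\in\cH_\mu$. Using that $(\Lambda,\leq)$ is directed, I would pick $\nu\in\Lambda$ with $\lambda\leq\nu$ and $\mu\leq\nu$, so that both $h\in\cH_\nu$ and $\cH_\lambda\subseteq\cH_\nu$. Because $(\cH_\nu,\langle\cdot,\cdot\rangle_{\cH_\nu})$ is a Hilbert space and $\cH_\lambda$ is a closed subspace of it (it is already a Hilbert space with the inner product induced from $\cH_\nu$ by the isometric inclusion $J_{\nu,\lambda}$), the classical orthogonal decomposition gives $h=h_1+h_2$ with $h_1\in\cH_\lambda$ and $h_2\in\cH_\nu$ satisfying $\langle h_2,k\rangle_{\cH_\nu}=0$ for every $k\in\cH_\lambda$.

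Next, I would upgrade $h_2\in\cH_\lambda^\perp$ (orthogonal complement inside $\cH$). For any $k\in\cH_\lambda$, by \eqref{e:lip} and the isometry of the inclusions $J_{\nu,\lambda}$ and $J_\nu$, one has $\langle h_2,k\rangle_\cH=\langle h_2,k\rangle_{\cH_\nu}=0$. Thus $h=h_1+h_2\in\cH_\lambda+\cH_\lambda^\perp$, showing $\cH=\cH_\lambda+\cH_\lambda^\perp$. The sum is direct, since any $x\in\cH_\lambda\cap\cH_\lambda^\perp$ satisfies $\langle x,x\rangle_\cH=0$, hence $x=0$ by positive definiteness of the inner product on $\cH$. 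This establishes assertion (iii) of Proposition~\ref{p:orthocomplement}, so by (iii)$\Leftrightarrow$(iv) there exists a Hermitian projection $P_\lambda\colon\cH\ra\cH$ with $\ran(P_\lambda)=\cH_\lambda$.

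For uniqueness of $P_\lambda$, I would observe that any Hermitian projection $P$ with $\ran(P)=\cH_\lambda$ must satisfy $(I-P)h\in\cH_\lambda^\perp$ for every $h\in\cH$: indeed, for $k\in\cH_\lambda$ write $k=Pk$ and compute $\langle (I-P)h,k\rangle=\langle (I-P)h,Pk\rangle=\langle P(I-P)h,k\rangle=0$, using that $P$ is Hermitian and $P-P^2=0$. Hence $h=Ph+(I-P)h$ is precisely the decomposition of $h$ along $\cH_\lambda\oplus\cH_\lambda^\perp$, which is unique by the direct sum property, so $Ph$ is uniquely determined by $h$.

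There is no serious obstacle; the one point that needs care is the passage from orthogonality inside the single Hilbert space $\cH_\nu$ to orthogonality inside the whole locally Hilbert space $\cH$, which is precisely where the isometric compatibility (lhs4) of the inner products along the inductive system is essential.
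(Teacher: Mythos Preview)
Your proof is correct and follows the natural approach: pick a common $\cH_\nu$ containing both $h$ and $\cH_\lambda$ using directedness, perform the Hilbert space orthogonal decomposition there, and then use the isometric compatibility (lhs4) to transfer orthogonality to $\cH$. The paper does not give its own proof but refers to Lemma~3.1 in \cite{Gheondea1}, where the same idea is carried out; your argument is essentially that proof, with the added observation that the existence and uniqueness of $P_\lambda$ can be read off directly from Proposition~\ref{p:orthocomplement}.
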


With respect to the decomposition provided by Lemma~\ref{l:perp}, the underlying locally Hilbert 
space structure of $\cH_\lambda^\perp$ can be explicitly described. In the following proposition and throughout 
this article, for each $\lambda\in\Lambda$ we use the notation
\begin{equation}\label{e:lal}
\Lambda_\lambda:=\{\alpha\in\Lambda \mid \alpha\leq \lambda\}.
\end{equation}
For the proof of the next
proposition we refer to Proposition~3.2 in \cite{Gheondea1}.

\begin{proposition}\label{l:lhorth} Let $\cH=\varinjlim\limits_{\lambda\in\Lambda}\cH_\lambda$ and,
for a fixed but
arbitrary $\lambda\in\Lambda$, let us 
denote by $\Lambda^\lambda=\{\mu\in\Lambda\mid \lambda\leq \mu\}$ the 
branch of $\Lambda$ defined by $\lambda$. Then,
with respect to the induced order relation $\leq$, $\Lambda^\lambda$ is a directed set, 
$(\cH_{\mu}\ominus\cH_\lambda)_{ \mu\in\Lambda_\lambda}$ 
is a strictly inductive system of Hilbert spaces and, modulo a canonical identification
of $\varinjlim\limits_{\mu\in\Lambda_\lambda} (\cH_\mu\ominus\cH_\lambda)$ with a subspace
of $\cH$, we have
\begin{equation}\label{e:ilperp} \cH_\lambda^\perp=\varinjlim_{\mu\in\Lambda^\lambda} 
(\cH_\mu\ominus\cH_\lambda).
\end{equation}
\end{proposition}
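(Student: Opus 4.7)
The plan is to verify the three assertions of the proposition in order, each one reducing to an application of the isometry condition (lhs4).

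First, that $(\Lambda^\lambda,\leq)$ is directed: given $\mu_1,\mu_2\in\Lambda^\lambda$, by directedness of $\Lambda$ pick $\nu\in\Lambda$ with $\mu_1,\mu_2\leq\nu$; since $\lambda\leq \mu_1\leq \nu$, we have $\nu\in\Lambda^\lambda$.

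Next, I would check that $(\cH_\mu\ominus\cH_\lambda)_{\mu\in\Lambda^\lambda}$ is a strictly inductive system of Hilbert spaces. For each $\mu\in\Lambda^\lambda$, by (lhs3) and (lhs4) the space $\cH_\lambda$ is isometrically contained in the Hilbert space $\cH_\mu$, hence is a closed subspace, so $\cH_\mu\ominus\cH_\lambda$ is itself a Hilbert space. For $\mu_1\leq\mu_2$ in $\Lambda^\lambda$, if $x\in\cH_{\mu_1}\ominus\cH_\lambda$, then $x\in\cH_{\mu_1}\subseteq\cH_{\mu_2}$, and for every $y\in\cH_\lambda$, one has $\langle x,y\rangle_{\cH_{\mu_2}}=\langle x,y\rangle_{\cH_{\mu_1}}=0$ by (lhs4); this shows $\cH_{\mu_1}\ominus\cH_\lambda\subseteq \cH_{\mu_2}\ominus\cH_\lambda$, and the inclusion is isometric, again by (lhs4).

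For the identification \eqref{e:ilperp}, I would show both set-theoretic inclusions, having in mind that $\varinjlim_{\mu\in\Lambda^\lambda}(\cH_\mu\ominus\cH_\lambda)$ is canonically identified with the union $\bigcup_{\mu\in\Lambda^\lambda}(\cH_\mu\ominus\cH_\lambda)$, viewed as a subspace of $\cH$ via the inclusions $\cH_\mu\ominus\cH_\lambda\hookrightarrow\cH_\mu\hookrightarrow\cH$. For the easy inclusion, if $x\in\cH_\mu\ominus\cH_\lambda$ for some $\mu\in\Lambda^\lambda$, then for all $y\in\cH_\lambda$ we have $\langle x,y\rangle_\cH=\langle x,y\rangle_{\cH_\mu}=0$ by \eqref{e:lip}, so $x\in\cH_\lambda^\perp$. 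For the other inclusion, let $x\in\cH_\lambda^\perp$. Since $x\in\cH$ there exists $\nu\in\Lambda$ with $x\in\cH_\nu$; by directedness of $\Lambda$ pick $\mu\in\Lambda$ with $\lambda,\nu\leq\mu$, so $\mu\in\Lambda^\lambda$ and $x\in\cH_\nu\subseteq\cH_\mu$. Then for any $y\in\cH_\lambda\subseteq\cH_\mu$, $\langle x,y\rangle_{\cH_\mu}=\langle x,y\rangle_\cH=0$ by \eqref{e:lip} and $x\in\cH_\lambda^\perp$, hence $x\in\cH_\mu\ominus\cH_\lambda$, as required.

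The main care in the argument lies not in any deep obstacle but in the bookkeeping of the inner products across the various levels: one must consistently invoke (lhs4) and the definition \eqref{e:lip} to identify the "internal" Hilbert-space orthogonal complement taken inside each $\cH_\mu$ with the orthogonal companion computed inside the inner product space $\cH$. Once this compatibility is recorded, together with Lemma~\ref{l:perp} guaranteeing that $\cH_\lambda^\perp$ is genuinely a direct complement of $\cH_\lambda$ in $\cH$, the coincidence of the two sides of \eqref{e:ilperp} becomes automatic, and the canonical identification of the inductive limit as a subspace of $\cH$ is the one induced by the isometric embeddings $J_\mu|_{\cH_\mu\ominus\cH_\lambda}$, $\mu\in\Lambda^\lambda$.
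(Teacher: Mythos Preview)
Your proof is correct and complete: each of the three claims is verified directly from the axioms (lhs1)--(lhs4) and the definition \eqref{e:lip}, with the only real care being the consistent tracking of which inner product is in play. The paper itself does not supply a proof here but refers to Proposition~3.2 in \cite{Gheondea1}; your direct verification is the natural argument and is exactly what one expects that reference to contain.
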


\subsection{Locally Bounded Operators.}\label{ss:lbo}
With notation as in Subsection~\ref{ss:gp}, let $\cH=\varinjlim_{\lambda\in A}\cH_\lambda$ and 
$\cK=\varinjlim_{\lambda\in A}\cK_\lambda$ be two locally Hilbert spaces generated by strictly
inductive systems of Hilbert spaces 
$((\cH_\lambda)_{\lambda\in\Lambda},(J_{\nu,\lambda}^\cH)_{\lambda\leq\nu})$ 
and, respectively, 
$((\cK_\lambda)_{\lambda\in\Lambda},(J_{\nu,\lambda}^\cK)_{\lambda\leq\nu})$,
indexed on the same directed set $\Lambda$. 
 A linear map 
$T\colon \cH\ra\cK$ is called a \emph{locally bounded operator} if it is a continuous double
coherent linear map, in the sense defined in Subsection~\ref{ss:cm}, 
more precisely,
\begin{itemize}
\item[(lbo1)] There exists a net of operators $(T_\lambda)_{\lambda\in\Lambda}$, 
with $T_\lambda\in\cB(\cH_\lambda,\cK_\lambda)$ such that 
$TJ^\cH_\lambda=J_\lambda^\cK T_\lambda$ for all $\lambda\in\Lambda$. 
\item[(lbo2)] The net of operators $(T_\lambda^*)_{\lambda\in\Lambda}$ 
is coherent as well, that is, $T_\nu^* J_{\nu,\lambda}^\cK=J_{\nu,\lambda}^\cH T_\lambda^*$,
for all $\lambda,\nu\in \Lambda$ such that $\lambda\leq\nu$.
\end{itemize}
We denote by $\Lloc(\cH,\cK)$ the collection of all locally bounded operators 
$T\colon\cH\ra\cK$. We observe that $\Lloc(\cH,\cK)$ is a vector space and that
there is a canonical embedding
\begin{equation}\label{e:lbl} 
\Lloc(\cH,\cK)\subseteq\varprojlim_{\lambda\in\Lambda}\cB(\cH_\lambda,\cK_\lambda).
\end{equation}

The correspondence between $T\in\Lloc(\cH,\cK)$ and the net of operators 
$(T_\lambda)_{\lambda\in\Lambda}$ as in (lbo1) and (lbo2) is unique. Given $T\in\Lloc(\cH,\cK)$,
for arbitrary $\lambda\in\Lambda$ we have $T_\lambda h=Th$, for all $h\in\cH_\lambda$, with the
observation that $Th\in\cK_\lambda$. Conversely, if $(T_\lambda)_{\lambda\in\Lambda}$ is a 
net of operators $T_\lambda\in\cB(\cH_\lambda,\cK_\lambda)$ satisfying (lbo2), then letting
$Th=T_\lambda h$ for arbitrary $h\in\cH$, where $\lambda\in\Lambda$ is such that 
$h\in\cH_\lambda$, it follows that $T$ is a locally bounded operator: 
this definition is correct by (lbo2). In view of \eqref{e:cohproj},
we will use the notation 
\begin{equation}\label{e:tpl} 
T=\varprojlim\limits_{\lambda\in\Lambda}T_\lambda.\end{equation}

On the other hand, in order to perform operator theory with locally bounded operators, 
we will need to assemble a net
of bounded operators acting between component spaces into a locally bounded 
operator acting between the corresponding locally Hilbert spaces. 
The following result tells us which additional properties this net of bounded operators 
must have in order to produce a locally bounded operator, see Proposition~1.12 in \cite{Gheondea3}.

\begin{proposition}\label{p:lbo2} Let
$(T_\lambda)_{\lambda\in\Lambda}$ be a net with $T_\lambda\in\cB(\cH_\lambda,\cK_\lambda)$ 
for all $\lambda\in\Lambda$. The following assertions are equivalent.

\nr{1} For every $\lambda,\nu\in\Lambda$ such that $\lambda\leq\nu$ we have:
\begin{equation}\label{e:temuha}
T_\nu|_{\cH_\lambda}=J_{\nu,\lambda}^\cK T_\lambda,\mbox{ and }
T_\nu P_{\lambda,\nu}^\cH=P_{\lambda,\nu}^\cK T_\nu,\end{equation} 
where $P^\cH_{\lambda,\nu}$ is the orthogonal projection of $\cH_\nu$ onto its subspace 
$\cH_\lambda$.

\nr{2} For every $\lambda,\nu\in\Lambda$ such that $\lambda\leq\nu$, with respect to the
decompositions
\begin{equation*}\cH_\nu=\cH_\lambda\oplus(\cH_\nu\ominus\cH_\lambda),\quad
\cK_\nu=\cK_\lambda\oplus(\cK_\nu\ominus\cK_\lambda),
\end{equation*} the operator $T_\nu$ has the following block matrix representation
\begin{equation}\label{e:temule} 
T_\nu=\left[\begin{matrix} T_\lambda & 0 \\ 0 & T_{\lambda,\nu}\end{matrix}\right],
\end{equation}
for some bounded linear operator $T_{\lambda,\nu}\colon\cH_\nu\ominus\cH_\lambda\ra
\cK_\nu\ominus\cK_\lambda$.

\nr{3} There exists an operator $T\in\Lloc(\cH,\cK)$ such that 
$T|_{\cH_\lambda}=J^\cK_\lambda T_\lambda$ for all $\lambda\in\Lambda$.

In addition, if any of these assertions holds (hence all of them hold), 
the operator $T\in\Lloc(\cH,\cK)$ as in \emph{(3)} is uniquely determined by 
$(T_\lambda)_{\lambda\in\Lambda}$ and
\begin{equation*} T=\varprojlim_{\lambda\in\Lambda}T_\lambda.
\end{equation*}
\end{proposition}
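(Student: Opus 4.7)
The plan is to verify the cyclic chain (1) $\Leftrightarrow$ (2) $\Rightarrow$ (3) $\Rightarrow$ (1) and then obtain the uniqueness statement at the end. The main technical observation that underlies everything is that, for a fixed pair $\lambda\leq\nu$, the commutation identity $T_\nu P^\cH_{\lambda,\nu}=P^\cK_{\lambda,\nu}T_\nu$ is equivalent to $\cH_\lambda$ and $\cH_\nu\ominus\cH_\lambda$ being invariant under $T_\nu$ in the appropriate sense, i.e.\ to the vanishing of the two off-diagonal blocks of $T_\nu$ with respect to the decompositions $\cH_\nu=\cH_\lambda\oplus(\cH_\nu\ominus\cH_\lambda)$ and $\cK_\nu=\cK_\lambda\oplus(\cK_\nu\ominus\cK_\lambda)$.

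For (1) $\Leftrightarrow$ (2), I would fix $\lambda\leq\nu$ and observe that the commutation identity in (1) is precisely the block-diagonal condition on $T_\nu$ in (2), while the identity $T_\nu|_{\cH_\lambda}=J^\cK_{\nu,\lambda}T_\lambda$ specifies that the upper-left block of $T_\nu$ is exactly $T_\lambda$. The off-diagonal blocks then vanish and the lower-right block, which I would call $T_{\lambda,\nu}$, is some bounded operator from $\cH_\nu\ominus\cH_\lambda$ to $\cK_\nu\ominus\cK_\lambda$; conversely, reading off \eqref{e:temule} immediately yields both equalities in \eqref{e:temuha}.

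For (2) $\Rightarrow$ (3), I would define $T\colon\cH\to\cK$ by $Th:=T_\lambda h$ whenever $h\in\cH_\lambda$. Well-definedness requires that if $h\in\cH_\lambda\cap\cH_\mu$ and $\epsilon\in\Lambda$ satisfies $\lambda,\mu\leq\epsilon$, then $T_\lambda h$ and $T_\mu h$ agree as elements of $\cK_\epsilon$; applying the block-diagonal form \eqref{e:temule} with pairs $(\lambda,\epsilon)$ and $(\mu,\epsilon)$ shows both quantities equal $T_\epsilon h$. Property (lbo1) is immediate by construction. For (lbo2), I would take adjoints in \eqref{e:temule}: the adjoint of a block-diagonal operator is block-diagonal with the adjoint blocks on the diagonal, so $T_\nu^*|_{\cK_\lambda}=J^\cH_{\nu,\lambda}T_\lambda^*$, which is exactly the required coherence relation for $(T_\lambda^*)_{\lambda\in\Lambda}$.

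Finally, for (3) $\Rightarrow$ (1), the first equality in \eqref{e:temuha} follows directly by applying $T|_{\cH_\lambda}=J^\cK_\lambda T_\lambda$ at the two indices $\lambda$ and $\nu$ and using the transitivity of the inclusions. The commutation identity then follows because (lbo2), applied to the coherent adjoint net, tells us that $T_\nu^*$ maps $\cK_\lambda$ into $\cH_\lambda$, while the first identity says $T_\nu$ maps $\cH_\lambda$ into $\cK_\lambda$; these two reducing-type properties together are exactly the block-diagonal condition, hence the commutation with projections. The uniqueness of $T\in\Lloc(\cH,\cK)$ in (3) is then immediate since a locally bounded operator is determined by its restrictions to the components, and the notation $T=\varprojlim_{\lambda\in\Lambda}T_\lambda$ is justified by \eqref{e:tpl}. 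The only mildly delicate step is keeping the inclusion maps $J^\cH_{\nu,\lambda}$, $J^\cK_{\nu,\lambda}$ notationally explicit while simultaneously reading $T_\nu$ as a block matrix with respect to the identified subspaces; this bookkeeping, rather than any genuinely difficult argument, is the main obstacle.
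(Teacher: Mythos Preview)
Your argument is correct and complete. Note, however, that the paper does not supply its own proof of this proposition: it is stated with a reference to Proposition~1.12 in \cite{Gheondea3}, so there is no in-paper proof to compare against. Your cyclic scheme (1)$\Leftrightarrow$(2)$\Rightarrow$(3)$\Rightarrow$(1), with the key observation that the projection-commutation identity is equivalent to block-diagonality and that (lbo2) is exactly the block-diagonality of the adjoint net, is the natural approach and matches what one would expect the cited proof to contain.
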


As a consequence of the previous proposition, we can introduce the adjoint operation on 
$\Lloc(\cH,\cK)$. Let $T=\varprojlim_{\lambda\in\Lambda}T_\lambda\in\Lloc(\cH,\cK)$ and
hence the net $(T_\lambda)_{\lambda\in\Lambda}$ satisfies the conditions \eqref{e:temuha}. Then,
consider the net of bounded operators $(T_\lambda^*)_{\lambda\in\Lambda}$, 
$T_\lambda^*\in\cB(\cH_\lambda,\cK_\lambda)$ for all $\lambda\in\Lambda$ and observe that,
for all $\lambda,\nu\in\Lambda$ with $\nu\geq \lambda$, we have
\begin{equation}
T_\nu^*|_{\cK_\lambda}=J_{\nu,\lambda}^\cH T_\lambda^*,\mbox{ and }
T_\nu^* P_{\lambda,\nu}^\cK=P_{\lambda,\nu}^\cH T_\nu^*,\end{equation}
hence, by Proposition~\ref{p:lbo2} there exists a unique operator $T^*\in\Lloc(\cK,\cH)$ such that
\begin{equation}\label{e:temuhadj}
T^*=\varprojlim_{\lambda\in\Lambda}T_\lambda^*.
\end{equation}

Given three locally Hilbert spaces $\cH=\varinjlim_{\lambda\in\Lambda}\cH_\lambda$,
$\cK=\varinjlim_{\lambda\in\Lambda}\cK_\lambda$, and 
$\cG=\varinjlim_{\lambda\in\Lambda}\cG_\lambda$, indexed on the same directed set $\Lambda$, we
observe that the composition of locally bounded operators yields locally bounded operators,
more precisely, whenever $T\in\Lloc(\cH,\cK)$ and $S\in\Lloc(\cK,\cG)$ it follows that 
$ST\in\Lloc(\cH,\cG)$ and usual algebraic properties as associativity and distributivity with
respect to addition and multiplication hold. Moreover, for each $\lambda\in\Lambda$ 
we have
\begin{equation*} (ST)_\lambda =S_\lambda T_\lambda
\end{equation*}
and hence
\begin{equation}\label{e:setep} ST=\varprojlim_{\lambda\in\Lambda} S_\lambda T_\lambda.
\end{equation}
In addition, composition of locally bounded
operators behaves as usual with respect to the adjoint operation, that is,
\begin{equation}\label{e:dadj}
(ST)^*=T^*S^*,\quad T\in\Lloc(\cH,\cK),\ S\in\Lloc(\cK,\cG).
\end{equation} 

Once we have the concept of adjoint of a locally bounded operator 
$T=\varprojlim_{\lambda\in\Lambda}T_\lambda\in\Lloc(\cH,\cK)$ and the concept of composition of locally 
bounded operators, it is natural to call it \emph{locally isometric} if 
$T^*T=I_{\cH}$. It is easy to see that $T$ is locally isometric if and only if 
$T_\lambda\in\cB(\cH_\lambda,\cK_\lambda)$ is isometric for all $\lambda\in \Lambda$. 
Also, $T$ is called \emph{locally unitary} if both $T$ and $T^*$ are locally isometric and then, it is clear that $T$ 
is locally unitary if and only if $T_\lambda\in\cB(\cH_\lambda,\cK_\lambda)$ is unitary for all 
$\lambda\in\Lambda$.

With respect to the pre-Hilbert spaces $\cH$ and $\cK$, a locally bounded operator $T\in \Lloc(\cH,\cK)$
is not, in general, a bounded operator. However, when considering $\cH$  and $\cK$ as dense linear 
subspaces of its Hilbert space completions $\widetilde\cH$ and, respectively, $\widetilde\cK$, it follows that
both $T$ and its adjoint $T^*$ are densely defined hence, they are closable and, letting $\widetilde T$ denote
the closure of $T$ then the closure of $T^*$ is exactly $\widetilde {T^*}$, the closure of $T^*$. So, when
dealing with locally bounded operators we deal with a collection of closed and densely defined operators
which have a common core for them, and a common core for all their adjoint operators.

In the following we recall, see Proposition~1.14 in \cite{Gheondea3}, an operator theoretic characterisation 
of locally bounded operators. Let $T\colon\cH\ra\cK$ be a linear operator and, for arbitrary 
$\lambda\in\Lambda$, by 
Lemma~\ref{l:perp}, with respect to the decompositions,
\begin{equation}\label{e:decomp} \cH=\cH_\lambda\oplus\cH_\lambda^\perp,\quad 
\cK=\cK_\lambda\oplus\cK_\lambda^\perp,
\end{equation} $T$ has the following block matrix representation
\begin{equation}\label{e:mat} T=\left[\begin{matrix} T_{11} & T_{12} \\ T_{21} & T_{22}\end{matrix}\right].
\end{equation}

\begin{proposition}\label{p:mat} A linear operator $T\colon\cH\ra\cK$ is locally bounded if and only 
if, for every $\lambda\in\Lambda$, its matrix representation \eqref{e:mat} is diagonal, i.e.\ 
$T_{12}=0$ and $T_{21}=0$, and $T_{11}$ is bounded.

In addition, if $T=\varprojlim_{\nu\in\Lambda}
{T_\nu}\in\Lloc(\cH,\cK)$ then, for every $\lambda\in\Lambda$, with 
respect to the block matrix representation \eqref{e:mat}, we have $T_{11}=T_\lambda$ and, with
respect to the locally Hilbert spaces 
$\cH_\lambda^\perp$ and $\cK_\lambda^\perp$ as in \eqref{e:ilperp}, 
$T_{22}\in\Lloc(\cH_\lambda^\perp,\cK_\lambda^\perp)$.
\end{proposition}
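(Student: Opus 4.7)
The plan is to prove both implications by combining the orthogonal decomposition $\cH=\cH_\lambda\oplus\cH_\lambda^\perp$ (and its counterpart for $\cK$) from Lemma~\ref{l:perp} with criterion~(2) of Proposition~\ref{p:lbo2}, which already encodes a block-diagonal structure at each comparable pair $\lambda\leq\nu$.

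For the necessity, I would fix $T=\varprojlim_\nu T_\nu\in\Lloc(\cH,\cK)$ and $\lambda\in\Lambda$. Condition~(lbo1) gives $Th=T_\lambda h\in\cK_\lambda$ for every $h\in\cH_\lambda$, so $T_{11}=T_\lambda$ (which is bounded) and $T_{21}=0$. To kill $T_{12}$, I would use Proposition~\ref{l:lhorth} to represent any vector of $\cH_\lambda^\perp$ as an element of some $\cH_\mu\ominus\cH_\lambda$ with $\mu\geq\lambda$, and then compute, for $k\in\cK_\lambda$,
\[
\langle Th,k\rangle = \langle T_\mu h,k\rangle = \langle h, T_\mu^* k\rangle = \langle h, T_\lambda^* k\rangle = 0,
\]
where the third equality invokes the adjoint coherence~(lbo2) and the last uses $T_\lambda^* k\in\cH_\lambda$ together with $h\perp\cH_\lambda$. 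Hence $T\cH_\lambda^\perp\subseteq\cK_\lambda^\perp$, so $T_{12}=0$.

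For the sufficiency, suppose that for every $\lambda$ the representation~\eqref{e:mat} is diagonal with $T_{11}$ bounded; equivalently, $T\cH_\lambda\subseteq\cK_\lambda$ and $T\cH_\lambda^\perp\subseteq\cK_\lambda^\perp$. Setting $T_\lambda:=T|_{\cH_\lambda}\in\cB(\cH_\lambda,\cK_\lambda)$ for each $\lambda$, I would verify criterion~(2) of Proposition~\ref{p:lbo2}: for $\lambda\leq\nu$, the inclusion $\cH_\nu\ominus\cH_\lambda\subseteq\cH_\lambda^\perp$ combined with the $\lambda$-diagonal property yields
\[
T_\nu(\cH_\nu\ominus\cH_\lambda)\subseteq\cK_\nu\cap\cK_\lambda^\perp=\cK_\nu\ominus\cK_\lambda,
\]
while $T_\nu|_{\cH_\lambda}=T_\lambda$ by construction. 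This is precisely the block form~\eqref{e:temule}, and Proposition~\ref{p:lbo2} concludes that $T=\varprojlim_\lambda T_\lambda\in\Lloc(\cH,\cK)$.

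For the addendum on $T_{22}$, I would realise $\cH_\lambda^\perp=\varinjlim_{\mu\in\Lambda^\lambda}(\cH_\mu\ominus\cH_\lambda)$ and similarly for $\cK_\lambda^\perp$ via Proposition~\ref{l:lhorth}, and then assemble the net $(T_{\lambda,\mu})_{\mu\in\Lambda^\lambda}$ extracted from~\eqref{e:temule} as the component operators of $T_{22}$; both coherence and adjoint-coherence conditions descend as restrictions of those already enjoyed by $(T_\mu)_{\mu\in\Lambda^\lambda}$. The main obstacle I anticipate is purely organisational rather than conceptual: carefully confirming that compressing the coherence data along the branch $\Lambda^\lambda$ produces a bona fide locally bounded operator on the shifted locally Hilbert spaces $\cH_\lambda^\perp$ and $\cK_\lambda^\perp$, with all the identifications arising from Proposition~\ref{l:lhorth} commuting appropriately with the block decompositions~\eqref{e:temule}.
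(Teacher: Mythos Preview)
Your argument is correct and follows the natural route via Proposition~\ref{p:lbo2} and Proposition~\ref{l:lhorth}. Note that the present paper does not actually supply a proof of this proposition; it is recalled from Proposition~1.14 in \cite{Gheondea3}, and your write-up is entirely consistent with the surrounding machinery the paper sets up (in particular the block-diagonal criterion \eqref{e:temule} and the branch decomposition \eqref{e:ilperp}).
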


Also, as a consequence of coherence, see Subsection~\ref{ss:cm}, 
any locally bounded operator $T\colon\cH\ra\cK$ 
is continuous with respect to the inductive limit topologies of $\cH$ and $\cK$. However, 
in general, a locally bounded operator $T\colon\cH\ra\cK$
may not be continuous with respect to the norm topologies of $\cH$ and $\cK$. An arbitrary
linear operator $T\in\Lloc(\cH,\cK)$ is continuous with respect to the norm topologies of $\cH$ 
and $\cK$ if and only if, with respect to the notation as in (lbo1) and (lbo2), 
$\sup_{\lambda\in\Lambda}\|T_\lambda\|_{\cB(\cH_\lambda,\cK_\lambda)}<\infty$. In this case,
the operator $T$ uniquely extends to an operator $\widetilde T\in\cB(\widetilde\cH,\widetilde\cK)$, 
where
$\widetilde\cH$ and $\widetilde\cK$ are the Hilbert space completions of $\cH$ and $\cK$, respectively, 
and $\|\widetilde T\|
=\sup_{\lambda\in\Lambda}\|T_\lambda\|_{\cB(\cH_\lambda,\cK_\lambda)}$.

As a consequence of \eqref{e:lbl}, $\Lloc(\cH,\cK)$ has a natural locally convex topology
induced by the projective limit locally convex topology of 
$\varprojlim_{\lambda\in\Lambda}\cB(\cH_\lambda,\cK_\lambda)$, more precisely, 
generated by the seminorms $\{q_\lambda\}_{\lambda\in\Lambda}$ defined by
\begin{equation}\label{e:qmt}
q_\nu(T)=\|T_\nu\|_{\cB(\cH_\nu,\cK_\nu)},\quad T
=(T_\lambda)_{\lambda\in\Lambda}\in\varprojlim_{\lambda\in\Lambda}
\cB(\cH_\lambda,\cK_\lambda).
\end{equation}
With respect to the embedding \eqref{e:lbl}, $\Lloc(\cH,\cK)$ is closed
in $\varprojlim_{\lambda\in\Lambda}\cB(\cH_\lambda,\cK_\lambda)$, hence complete.
               
The locally convex space $\Lloc(\cH,\cK)$ can be organised as a projective limit of locally 
convex spaces, in view of \eqref{e:lbl}, more precisely, letting 
$\pi_\nu\colon \varprojlim_{\lambda\in\Lambda} \cB(\cH_\lambda,\cK_\lambda)\ra 
\cB(\cH_\nu,\cK_\nu)$, for $\nu\in\Lambda$, 
be the canonical projection, then
\begin{equation}\label{e:pll}
\Lloc(\cH,\cK)=\varprojlim_{\lambda\in\Lambda} \pi_\lambda (\Lloc(\cH,\cK)).
\end{equation}

\subsection{Locally $C^*$-Algebras.}\label{ss:lcsa}
A $*$-algebra $\cA$ is called a \emph{locally $C^*$-algebra}
if it has a complete Hausdorff locally convex topology which is induced by a family of 
$C^*$-seminorms, that is,
seminorms $p$ with the property $p(a^*a)=p(a)^2$ for all $a\in\cA$, see \cite{Inoue}. 
Any $C^*$-seminorm $p$ has also the properties $p(a^*)=p(a)$ and $p(ab)\leq p(a)p(b)$ 
for all $a,b\in\cA$, cf.\ \cite{Sebestyen}. 
Locally $C^*$-algebras have been also called \emph{$LMC^*$-algebras} 
\cite{Schmudgen}, \emph{$b^*$-algebras} \cite{Allan}, \emph{pro $C^*$-algebras} 
\cite{Voiculescu}, \cite{Phillips}, and \emph{multinormed $C^*$-algebras} \cite{Dosi1}. 

If $\cA$ is a locally $C^*$-algebra, let $S(\cA)$ denote the collection of all continuous 
$C^*$-seminorms and note that $S(\cA)$ is a directed set with respect to the partial order defined as
follows:
$p\leq q$ if $p(a)\leq q(a)$ for all $a\in\cA$. If $p\in S(\cA)$ then 
\begin{equation}\label{e:cisp} 
\cI_p=\{a\in\cA\mid p(a)=0\}
\end{equation} is a
closed two sided $*$-ideal of $\cA$ and $\cA_p=\cA/\cI_p$ becomes a $*$-algebra with respect 
to the $C^*$-norm $\|\cdot\|_p$ induced by $p$, more precisely,
\begin{equation}\label{e:tlp} 
\|a+\cI_p\|_p=p(a),\quad a\in\cA.
\end{equation} 
It is proven by C.~Apostol in \cite{Apostol} that the $C^*$-norm  is complete and hence $\cA_p$ is a $C^*$-
algebra already and no completion is needed, as assumed in \cite{Schmudgen} and \cite{Phillips}.
Letting $\pi_p\colon \cA\ra\cA_p$ denote the 
canonical projection, for any $p,q\in S(\cA)$ such that $p\leq q$ there exists a canonical 
$*$-epimorphism of $C^*$-algebras $\pi_{p,q}\colon \cA_q\ra \cA_p$ such that 
$\pi_p=\pi_{p,q}\circ \pi_q$, with respect to which 
$(\cA_p)_{p\in S(\cA)}$ becomes a projective system of $C^*$-algebras such that 
\begin{equation}\label{e:alim}
\cA=\varprojlim\limits_{p\in S(\cA)} \cA_p,\end{equation}
see \cite{Schmudgen}, \cite{Phillips}. This projective limit is taken
in the category of locally convex $*$-algebras and hence all the morphisms are continuous
$*$-morphisms of locally convex $*$-algebras.

\begin{remark}\label{r:bac}
Letting $b(\cA)=\{a\in\cA\mid \sup_{p\in S(\cA)} p(a)<+\infty\}$, it follows that 
$\|a\|=\sup_{p\in S(\cA)} p(a)$ is a $C^*$-norm on the $*$-algebra $b(\cA)$ and, with respect to
this norm, $b(\cA)$ is a $C^*$-algebra, dense in $\cA$, see \cite{Apostol}. The elements
of $b(\cA)$ are called \emph{bounded}.

The $C^*$-algebra $b(\cA)$, together with the system of $*$-morphisms $\{\pi_p\}_{p\in S(\cA)}$, where, with an 
abuse of notation, we denote by the same symbol $\pi_p\colon b(\cA)\ra \cA_p$ 
the canonical projection  $\pi_p\colon \cA\ra\cA_p=\cA/\cI_p$ restricted to $b(\cA)$,
is the projective limit of the projective system of 
$C^*$-algebras $(\cA_p,\pi_p)_{p\in S(\cA)}$ in the category $\fC^*$ of all $C^*$-algebras. Indeed, one can 
show that the following statements hold.
\begin{itemize}
\item[(cs1)] $b(\cA)\in\fC^*$.
\item[(cs2)] For any $p,q\in S(\cA)$, with $p\leq q$ we have $\pi_p=\pi_{p,q}\circ \pi_q$.
\item[(cs3)] For any $C^*$-algebra $\cB$ and $\{\phi_p\}_{p\in S(\cA)}$, where $\phi_p\colon \cB\ra \cA$ are $*$-
morphisms for all $p\in S(\cA)$, there exists a unique $*$-morphism $\phi\colon \cB\ra b(\cA)$ such that
$\pi_p\circ \phi=\phi_p$ for all $p\in S(\cA)$.
\end{itemize}
\end{remark}

\subsection{The Locally $C^*$-Algebra $\Lloc(\cH)$.} \label{ss:lcsab}
Let $\cH=\varinjlim\limits_{\lambda\in\Lambda}\cH_\lambda$
be a locally Hilbert space and $\Lloc(\cH)$ be the locally convex space of all locally 
bounded operators $T\colon \cH\ra\cH$,
as in Subsection~\ref{ss:lbo}.
$\Lloc(\cH)$ has a natural product and a natural involution $*$, 
with respect to which it is a $*$-algebra, see \eqref{e:temuhadj} and \eqref{e:setep}.
For each $\mu\in\Lambda$, consider the $C^*$-algebra 
$\cB(\cH_\mu)$ of all bounded linear operators in $\cH_\mu$ and 
$\pi_\mu\colon\Lloc(\cH)\ra\cB(\cH_\mu)$ be the canonical map: 
\begin{equation}\label{e:pimu}
\pi_\mu(T)=T_\mu,\quad T=\varprojlim\limits_{\lambda\in\Lambda}T_\lambda\in\Lloc(\cH).
\end{equation} Let $\Lloc(\cH_\mu)$ denote the range of $\pi_\mu$ and note that it is 
a $C^*$-subalgebra of $\cB(\cH_\mu)$.
It follows that $\pi_\mu\colon\Lloc(\cH)\ra\Lloc(\cH_\mu)$ is a $*$-morphism of $*$-algebras 
and, for each 
$\lambda,\mu\in \Lambda$ with $\lambda\leq\mu$, 
there is a unique $*$-epimorphism of $C^*$-algebras 
$\pi_{\lambda,\mu}\colon \Lloc(\cH_\mu)\ra\Lloc(\cH_\lambda)$, such that 
$\pi_\lambda=\pi_{\lambda,\mu}\pi_\mu$. More precisely,
$\pi_{\lambda,\mu}$ is the compression of $\cH_\mu$ to $\cH_\lambda$,
\begin{equation}\label{e:plmh} 
\pi_{\lambda,\mu}(S)=J_{\mu,\lambda}^*SJ_{\mu,\lambda},\quad S\in\Lloc(\cH_\mu).
\end{equation}
Then $((\Lloc(\cH_\lambda))_{\lambda\in\Lambda},
(\pi_{\lambda,\mu})_{\lambda,\mu\in\Lambda,\, \lambda\leq \mu})$ is
a projective system of $C^*$-algebras, in the sense that,
\begin{equation}\label{e:ple} 
\pi_{\lambda,\eta}=\pi_{\lambda,\mu}\circ\pi_{\mu,\eta},\quad \lambda,\mu,\eta\in\Lambda,\
\lambda\leq\mu\leq\eta,
\end{equation} and, in addition,
\begin{equation}\label{e:coh}
\pi_\mu(S)P_{\lambda,\mu}=P_{\lambda,\mu}\pi_\mu(S),\quad \lambda,\mu\in\Lambda,\ 
\lambda\leq\mu,\ S\in\Lloc(\cH_\mu),
\end{equation}
such that
\begin{equation}\label{e:loclhs} \Lloc(\cH)=\varprojlim_{\lambda\in\Lambda} \Lloc(\cH_\lambda),
\end{equation}
where, the projective limit is considered in the category of locally convex $*$-algebras. 
In particular, $\Lloc(\cH)$ is a locally $C^*$-algebra.

For each $\lambda\in \Lambda$, letting $p_\lambda\colon\Lloc(\cH)\ra\RR$ be defined by 
\begin{equation}\label{e:psmu}
 p_\lambda(T)=\|T_\lambda\|_{\cB(\cH_\lambda)},\quad T=\varprojlim_{\nu\in\Lambda}
T_\nu\in\Lloc(\cH),
\end{equation} then $p_\lambda$ is a $C^*$-seminorm on $\Lloc(\cH)$. Then $\Lloc(\cH)$ 
becomes a unital locally $C^*$-algebra with the topology induced by  
$\{p_\lambda\}_{\lambda\in\Lambda}$. 

The $C^*$-algebra $b(\Lloc(\cH))$, see Remark~\ref{r:bac}, 
coincides with the set of all locally bounded operators 
$T=\varprojlim_{\lambda\in\Lambda}T_\lambda$ such that $(T_\lambda)_{\lambda\in\Lambda}$
is uniformly bounded, i.e.\ $\sup_{\lambda\in\Lambda}\|T_\lambda\|<\infty$, 
equivalently, those locally bounded operators $T\colon \cH\ra\cH$ which are bounded with respect
to the canonical norm $\|\cdot\|_\cH$ on the pre-Hilbert space 
$(\cH,\langle\cdot,\cdot\rangle_\cH)$. In particular $b(\cA)$ is a $C^*$-subalgebra of 
$\cB(\widetilde\cH)$, where $\widetilde\cH$ denotes the completion of 
$(\cH,\langle\cdot,\cdot\rangle_\cH)$ to a Hilbert space.

By taking the closure, any locally bounded operator $T\in\Lloc(\cH)$ uniquely extends
to a closed and densely defined operator $\widetilde T$ on the Hilbert space $\widetilde\cH$.

\section{Representing Locally Hilbert Spaces}\label{s:rlhs}

\subsection{Why Representing Locally Hilbert Spaces?}\label{ss:wrlhs} The original motivation for considering
locally Hilbert spaces comes from A.~Inoue's construction in the generalisation of the Gelfand-Naimark 
Theorem for locally $C^*$-algebras, see \cite{Inoue}. 
A first observation is that, in that construction, the locally Hilbert space has one 
additional property which was not included in the definition but which becomes necessary 
in order to obtain functional models.

\begin{definition}\label{d:rep}
A locally Hilbert space $\cH=\varinjlim_{\lambda\in\Lambda}\cH_\lambda $ is called \emph{representing}
if, letting $P_\lambda\colon\cH\ra\cH$ denote the Hermitian projection onto $\cH_\lambda$, for arbitrary 
$\lambda\in\Lambda$, as in Lemma~\ref{l:perp}, we have
\begin{itemize}
\item[(lch5)] $P_\lambda P_\nu=P_\nu P_\lambda$ for all $\lambda,\nu\in\Lambda$.
\end{itemize}
In this case, the strictly inductive system of Hilbert spaces $(\cH_\lambda)_{\lambda\in\Lambda}$ is called
\emph{representing} as well.
\end{definition}

Representing locally Hilbert spaces have shown up already for different purposes 
in \cite{Dosi2} under the name of commutative quantised domains of Hilbert spaces. 
Our motivation is strongly related to spectral theory. We firstly relate this property with the 
orthogonal projections between the components Hilbert spaces.

\begin{lemma}\label{l:repof} A locally Hilbert space is representing if and only if, for any 
$\lambda,\nu\in\Lambda$ and any $\epsilon\in\Lambda$ such that $\lambda\leq\epsilon$ and 
$\nu\leq\epsilon$ (since $\Lambda$ is directed, we always can find such an $\epsilon$), letting 
$P_{\lambda,\epsilon},P_{\nu,\epsilon}\in\cB(\cH_\epsilon)$ 
be the orthogonal projections onto $\cH_\lambda$ and $\cH_\nu$, respectively,  we have
\begin{equation}\label{e:pale}
P_{\lambda,\epsilon}P_{\nu,\epsilon}=P_{\nu,\epsilon}P_{\lambda,\epsilon}.
\end{equation}
\end{lemma}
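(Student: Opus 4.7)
The plan is to reduce both directions to the key identification that, whenever $\lambda\leq\epsilon$, the restriction of the global Hermitian projection $P_\lambda\colon\cH\to\cH$ to the component $\cH_\epsilon$ coincides with the orthogonal projection $P_{\lambda,\epsilon}\colon\cH_\epsilon\to\cH_\epsilon$. Once this identification is in place, the equivalence is essentially a matter of passing between statements on $\cH$ and statements on an $\cH_\epsilon$ large enough to contain both $\cH_\lambda$ and $\cH_\nu$.

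To establish the identification, first I would pick $h\in\cH_\epsilon$. By Lemma~\ref{l:perp} we decompose $h=P_\lambda h+(I-P_\lambda)h$ with $P_\lambda h\in\cH_\lambda$ and $(I-P_\lambda)h\in\cH_\lambda^\perp$. Because the inclusion $\cH_\epsilon\hookrightarrow\cH$ is isometric, i.e.\ the inner product on $\cH_\epsilon$ is the restriction of $\langle\cdot,\cdot\rangle_\cH$, the orthogonality relation $(I-P_\lambda)h\perp\cH_\lambda$ inherited from $\cH$ is exactly the orthogonality in $\cH_\epsilon$. Since $\cH_\lambda\subseteq\cH_\epsilon$, uniqueness of the orthogonal decomposition inside the Hilbert space $\cH_\epsilon$ forces $P_\lambda h=P_{\lambda,\epsilon}h$. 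This is the same fact as saying $P_\lambda\in\Lloc(\cH)$ with component $(P_\lambda)_\epsilon=P_{\lambda,\epsilon}$, a consequence of Proposition~\ref{p:mat}.

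For the forward direction, assume $P_\lambda P_\nu=P_\nu P_\lambda$ on $\cH$. Given any $\epsilon\geq\lambda,\nu$, restrict both sides to $\cH_\epsilon$; since $P_\nu$ (respectively $P_\lambda$) maps $\cH_\epsilon$ into $\cH_\nu\subseteq\cH_\epsilon$ (respectively $\cH_\lambda\subseteq\cH_\epsilon$), the identification from the previous paragraph yields $P_{\lambda,\epsilon}P_{\nu,\epsilon}=P_{\nu,\epsilon}P_{\lambda,\epsilon}$ on $\cH_\epsilon$. Conversely, assume \eqref{e:pale} holds. Given arbitrary $h\in\cH$, choose $\mu\in\Lambda$ with $h\in\cH_\mu$ and, using that $\Lambda$ is directed, pick $\epsilon\in\Lambda$ with $\lambda\leq\epsilon$, $\nu\leq\epsilon$, and $\mu\leq\epsilon$, so $h\in\cH_\epsilon$. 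Applying the identification again gives $P_\lambda P_\nu h=P_{\lambda,\epsilon}P_{\nu,\epsilon}h=P_{\nu,\epsilon}P_{\lambda,\epsilon}h=P_\nu P_\lambda h$, proving (lch5).

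The only subtlety is the identification $P_\lambda|_{\cH_\epsilon}=P_{\lambda,\epsilon}$, which is not an obstacle once the isometric inclusion property (lhs4) is invoked carefully, so the argument is essentially a direct verification; no further ingredients from subsequent sections are needed.
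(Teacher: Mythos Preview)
Your proof is correct and follows essentially the same route as the paper's: both directions hinge on the identification $P_\lambda|_{\cH_\epsilon}=P_{\lambda,\epsilon}$ for $\lambda\leq\epsilon$, which you make explicit and the paper uses tacitly. The only cosmetic difference is that you justify this identification via the orthogonal decomposition and (lhs4), whereas the paper simply asserts it and chains the equalities directly.
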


\begin{proof} We first assume that the locally Hilbert space $\cH$ is representing and
fix $\lambda,\nu\in\Lambda$ and $\epsilon\in\Lambda$ such that $\lambda\leq\epsilon$ and $\nu\leq\epsilon$.
Then, for any $h\in\cH_\epsilon$ we have $P_\lambda h=P_{\lambda,\epsilon}h$ and 
$P_\nu h=P_{\nu,\epsilon}h$ and hence,
\begin{equation*}
P_{\lambda,\epsilon}P_{\nu,\epsilon}h=P_{\lambda,\epsilon}P_\nu h=P_\lambda P_\nu h=
P_\nu P_\lambda h=P_{\nu,\epsilon}P_\lambda h
=P_{\nu,\epsilon}P_{\lambda,\epsilon} h,
\end{equation*}
where we have taken into account that $P_\lambda h$ and $P_\nu h$ belong to $\cH_\epsilon$.

Conversely, assume that for any $\lambda,\nu\in\Lambda$ and any $\epsilon\in\Lambda$ such that 
$\lambda,\nu\leq\epsilon$ \eqref{e:pale} holds. Fix $\lambda,\nu\in\Lambda$.
For any $h\in \cH$ there exists $\epsilon\in\Lambda$ such that 
$h\in\cH_\epsilon$. Since $\Lambda$ is directed, from (lhs3) and (lhs4), without loss of generality 
we can assume that $\lambda,\nu\leq\epsilon$. Then both $P_\lambda h$ and 
$P_\nu h$ belong to $\cH_\epsilon$ and hence
\begin{equation*}
P_\lambda P_\nu h=P_\lambda P_{\nu,\epsilon}h=P_{\lambda,\epsilon}P_{\nu,\epsilon}h=
P_{\nu,\epsilon}P_{\lambda,\epsilon}h=P_{\nu,\epsilon}P_\lambda h=P_\nu P_\nu h.
\end{equation*}
Since this holds for any $h\in\cH$, it follows that $P_\lambda P_\nu=P_\nu P_\lambda$, hence $\cH$
is representing.
\end{proof}

\begin{example}\label{e:rlhs} Here we show that 
the original construction of Inoue, in the proof of the generalisation of the 
Gelfand--Naimark's Theorem for locally $C^*$-algebras, cf.\ Theorem~5.1 in \cite{Inoue}, 
leads to a representing locally Hilbert space. Let $\Lambda$ be a directed
set and let $(\cG_\lambda)_{\lambda\in\Lambda}$ be a net of Hilbert spaces. 
For each $\lambda\in\Lambda$ consider the set $\Lambda_\lambda$ defined as in \eqref{e:lal}
 and the Hilbert space $\cH_\lambda$
\begin{equation}\label{e:cal}
\cH_\lambda:=\bigoplus_{\alpha\in\Lambda_\lambda} \cG_\alpha.
\end{equation}

It is easy to see that $(\cH_\lambda)_{\lambda\in\Lambda}$ is a strictly inductive system of Hilbert spaces
and hence it gives rise to a locally Hilbert space $\cH=\varinjlim_{\lambda\in\Lambda}\cH_\lambda$. 
In the following we show that $\cH$ is representing. To this end, we use Lemma~\ref{l:repof}, so
let $\lambda,\nu,\epsilon\in\Lambda$ be
such that $\lambda\leq\epsilon$ and $\nu\leq\epsilon$. Then, with notation as in \eqref{e:lal},
$\Lambda_\lambda\subseteq\Lambda_\epsilon$ and $\Lambda_\nu\subseteq\Lambda_\epsilon$, hence
we have the following decomposition in mutually disjoint subsets
\begin{equation}\label{e:leca}
\Lambda_\epsilon=(\Lambda_\lambda\setminus\Lambda_\nu)\cup (\Lambda_\lambda\cap\Lambda_\nu)
\cup (\Lambda_\nu\setminus\Lambda_\lambda)\cup 
(\Lambda_\epsilon \setminus(\Lambda_\lambda\cup \Lambda_\nu)),
\end{equation}
which yields the following decomposition in terms of mutually orthogonal subspaces
\begin{equation}\label{e:lecah}
\cH_\epsilon=\bigoplus_{\alpha\in\Lambda_\lambda\setminus\Lambda_\nu}\cG_\alpha \oplus
\bigoplus_{\alpha\in\Lambda_\lambda\cap\Lambda_\nu}\cG_\alpha
 \oplus \bigoplus_{\alpha\in\Lambda_\nu\setminus\Lambda_\lambda}\cG_\alpha
\oplus \bigoplus_{\alpha\in\Lambda_\epsilon\setminus(\Lambda_\lambda\cup\Lambda_\nu)}\cG_\alpha.
\end{equation}
We observe that
\begin{equation*}
\cH_\lambda=\bigoplus_{\alpha\in\Lambda_\lambda\setminus\Lambda_\nu}\cG_\alpha \oplus
\bigoplus_{\alpha\in\Lambda_\lambda\cap\Lambda_\nu}\cG_\alpha,
\end{equation*}
that, similarly,
\begin{equation*}
\cH_\nu=\bigoplus_{\alpha\in\Lambda_\nu\setminus\Lambda_\lambda}\cG_\alpha \oplus
\bigoplus_{\alpha\in\Lambda_\lambda\cap\Lambda_\nu}\cG_\alpha,
\end{equation*}
and that
\begin{equation*}
\cH_\lambda\cap\cH_\nu = \bigoplus_{\alpha\in\Lambda_\lambda\cap\Lambda_\nu}\cG_\alpha.
\end{equation*}
Letting $P_{\lambda,\epsilon}\in\cB(\cH_\epsilon)$ denote the orthogonal projection of $\cH_\epsilon$
onto $\cH_\lambda$, from \eqref{e:lecah} we obtain
\begin{equation*}
P_{\lambda,\epsilon}P_{\nu,\epsilon}=P_{\cH_\lambda\cap\cH_\nu}=P_{\nu,\epsilon}P_{\lambda,\epsilon}.
\end{equation*}
Since this holds for any $\epsilon\in\Lambda$ such that $\lambda\leq \epsilon$ and $\nu\leq\epsilon$,
by Lemma~\ref{l:repof}
it follows that $\cH$ is representing.
\end{example}

\subsection{Strictly Inductive Systems of Measurable Spaces.}\label{ss:sisms}
Let us recall that a measure space $(Y;\Sigma;m)$ is called \emph{decomposable}
if $Y$ has a measurable partition $\{Y_j\}_{j\in\cJ}$ with $m(Y_j)<\infty$ for all $j\in\cJ$ and such that.
\begin{itemize}
\item[(dm1)] If $E\subseteq Y$ has the property that $E\cap Y_j\in\Sigma$ for all $j\in\cJ$ then $E\in\Sigma$.
\item[(dm2)] $m(E)=\sum_{j\in\cJ} m(E\cap Y_j)$, in the sense of summability.
\end{itemize}
Then, $(Y;\Sigma;m)$ is decomposable if and only if $Y$ has a measurable partition 
$\{Y_\alpha\}_{\alpha\in A}$ with $m(Y_\alpha)>0$ for all $\alpha\in A$ and such that:
\begin{itemize}
\item[(dm1)] If $E\subseteq Y$ has the property that $E\cap Y_\alpha\in \Sigma$ for all $\alpha\in A$ then 
$E\in\Sigma$.
\item[(dm2)$^\prime$] If $E\in\Sigma$ and $m(E\cap Y_\alpha)=0$ for all $\alpha\in A$ then $m(E)=0$.
\end{itemize}
Decomposable measure spaces share many properties which $\sigma$-finite measure spaces have, e.g.\
see \cite{Kelley}.
In general, the measure spaces $(X_\lambda,\Omega_\lambda,\mu_\lambda)$ which we will consider in the
following will be at least decomposable.

Given a directed set $(\Lambda;\leq)$, a net $((X_\lambda,\Omega_\lambda))_{\lambda
\in\Lambda}$ is called a \emph{strictly inductive system of measurable spaces} if the following properties hold.
\begin{itemize}
\item[(sim1)] For each $\lambda\in\Lambda$, $(X_\lambda,\Omega_\lambda)$ is a measurable space.
\item[(sim2)] For each $\lambda,\nu\in\Lambda$ with $\lambda\leq\nu$ we have 
$X_\lambda\subseteq X_\nu$ and 
$\Omega_\lambda=\{A\cap X_\lambda\mid A\in\Omega_\nu\}\subseteq\Omega_\nu$.
\end{itemize}
Given a strictly inductive system of measurable spaces 
$((X_\lambda,\Omega_\lambda))_{\lambda
\in\Lambda}$, we denote
\begin{equation}\label{e:lms}
X=\bigcup_{\lambda\in\Lambda} X_\lambda,\quad 
\Omega=\bigcup_{\lambda\in\Lambda}\Omega_\lambda.
\end{equation}
The pair $(X,\Omega)$ is called the \emph{inductive limit} of the strictly inductive system of measurable 
spaces, and we use the notation
\begin{equation}\label{e:ilm}
(X,\Omega)=\varinjlim_{\lambda\in\Lambda}(X_\lambda,\Omega_\lambda).
\end{equation}

In general, the inductive limit $(X,\Omega)$ of a strictly inductive system of measurable spaces is not
a measurable space, since $\Omega$ is not a $\sigma$-algebra. Actually,
$\Omega$ is a \emph{ Boolean ring} of subsets in $X$, that is, 
for any $A,B\in\Omega$ it follows 
$A\setminus B, A\cap B, A\cup B\in\Omega$, but, in general, not even a $\sigma$-ring, that is,
it may not be stable under countable unions or countable intersections. Moreover, $\Omega$ is a 
\emph{locally $\sigma$-ring} in the sense that, if $(A_n)_n$ is a sequence of subsets from $\Omega$ 
such that there exists $\lambda\in\Lambda$ with the property that $A_n\in\Omega_\lambda$ for all
$n\in\NN$, it follows that $\bigcup_{n\in\NN} A_n$ and $\bigcap_{n\in\NN} A_n$ are in $\Omega$.

$\Omega$ has a canonical extension to a $\sigma$-algebra. Letting 
\begin{equation}\label{e:tomega} \widetilde\Omega:=\{A\subseteq X\mid A\cap 
X_\lambda\in\Omega_\lambda\mbox{ for all }\lambda\in\Lambda\},
\end{equation}
then, by Proposition~3.1 in \cite{Gheondea3},
$\widetilde\Omega$ is a $\sigma$-algebra and $\Omega\subseteq\widetilde\Omega$.

\begin{remark} One may think that the pair $(X,\tilde\Omega)$, with definition as in \eqref{e:tomega}, 
should be called the inductive limit of the strictly inductive system of measurable spaces. From the category 
theory point of view, this is true, within the category $\fM$ of measurable spaces, where objects are measurable 
spaces and morphisms are measurable maps. More precisely, we consider 
$(X,\tilde\Omega,(j_\lambda)_{\lambda\in\Lambda})$, 
where $j_\lambda\colon X_\lambda\hookrightarrow X$ is the inclusion map, 
for each $\lambda\in\Lambda$. Also, for each 
$\lambda\leq \nu$, let $j_{\nu,\lambda}\colon X_\lambda\hookrightarrow X_\nu$ be the inclusion map. Then
$(((X_\lambda,\Omega_\lambda))_{\lambda\in\Lambda},(j_{\nu,\lambda})_{\lambda\leq \nu})$ is an inductive 
system and $(X,\tilde\Omega,(j_\lambda)_{\lambda\in\Lambda})$ is the inductive limit. Indeed, the following assertions hold true.
\begin{itemize}
\item[(il1)] $(X,\tilde\Omega)\in \fM$.
\item[(il2)] For any $\nu\geq \lambda$ we have $j_\nu\circ j_{\nu,\lambda}=j_\lambda$. (Compatibility)
\item[(il3)] For any $(Y,\Xi)\in\cM$ and $(\phi_\lambda)_{\lambda\in\Lambda}$, where 
$\phi_\lambda\colon (X_\lambda,\Omega_\lambda)\ra (Y,\Xi)$ is measurable, for each $\lambda\in\Lambda$, 
there exists uniquely a measurable map $\phi\colon (X,\tilde\Omega)\ra (Y,\Xi)$ such that 
$\phi\circ j_\lambda=\phi_\lambda$ for all $\lambda\in \Lambda$. (Universality)
\end{itemize}

However, as we will see later in Subsection~\ref{ss:crlhs}, 
in order to match these with locally Hilbert spaces and 
locally bounded operators, we have to work in a larger category, that of Boolean spaces, where objects are sets 
equipped with Boolean rings. This comes from the fact that locally Hilbert spaces are 
inductive limits in the larger category of locally convex spaces, not in the category of Hilbert spaces, see 
Remark~\ref{r:lch}.
\end{remark}

\subsection{Strictly Inductive Systems of Measure Spaces.}\label{ss:sismes}
Given a directed set $(\Lambda;\leq)$, a net $((X_\lambda,\Omega_\lambda,\mu_\lambda))_{\lambda
\in\Lambda}$ of measure spaces is called a \emph{strictly inductive system of measure spaces} if
$(X_\lambda,\Omega_\lambda)_{\lambda\in\Lambda}$ is a strictly inductive system of measurable spaces 
and, in addition,
\begin{itemize}
\item[(sim3)] For any $\lambda,\nu\in\Lambda$ with $\lambda\leq\nu$ and any $U\in\Omega_\lambda$ we 
have $\mu_\lambda(U)=\mu_\nu(U)$.
\end{itemize}
With notation as in \eqref{e:lms},
let also $\mu\colon\Omega\ra [0,+\infty]$ be defined by
\begin{equation}\label{e:lmsa}
\mu(\Delta)=\mu_\lambda (\Delta),\quad \Delta\in\Omega,
\end{equation}
where $\lambda\in\Lambda$ is such that $\Delta\in\Omega_\lambda$. 

Taking into account the assumptions (sim1)--(sim3), it is easy to see that $\mu$
is correctly defined, that is, its definition does not depend on $\lambda$. In addition, $\mu$ is nondecreasing, 
$\mu(\emptyset)=0$, and \emph{additive} in the sense that whenever $A,B\in\Omega$ are disjoint then 
$\mu(A\cup B)=\mu(A)+\mu(B)$. In general, $\mu$ is not $\sigma$-additive, but it
is \emph{locally $\sigma$-additive}, that is: 
for any mutually disjoint sequence $(A_n)_{n\in\NN}$ such that there exists 
$\lambda\in\Lambda$ with the property that $A_n\in\Omega_\lambda$ for all $n\in\NN$ we 
have
\begin{equation}\label{e:sap}
\mu(\bigcup_{n\in\NN} A_n)=\sum_{n\in\NN}\mu(A_n).
\end{equation}

The triple $(X,\Omega,\mu)$ is called the \emph{inductive limit} of the strictly inductive system of measure
spaces $(X_\lambda,\Omega_\lambda,\mu_\lambda)_{\lambda\in\Lambda}$ and we use the notation
\begin{equation}\label{e:ilsm}
(X,\Omega,\mu)=\varinjlim_{\lambda\in\Lambda} (X_\lambda,\Omega_\lambda,\mu_\lambda).
\end{equation}

\begin{example}\label{ex:locli}
We consider the notation as before. Let $L^\infty_{\mathrm{loc}}(X,\mu)$ denote
the set of all functions $\phi\colon X\ra \CC$, identified $\mu$-a.e., such that, 
for any $\lambda\in\Lambda$ we have $\phi|_{X_\lambda}\in L^\infty(X_\lambda,\mu_\lambda)$. With
natural addition, multiplication, multiplication with scalars, and involution, it is easy to see that
$L^\infty_{\mathrm{loc}}(X,\mu)$ is a commutative $*$-algebra. For each $\lambda\in \Lambda$ we
consider the seminorm
\begin{equation}\label{e:loclisem}
p_\lambda(\phi):= \|\phi|X_\lambda\|_\infty=\esssup_{x\in X_\lambda}|\phi(x)|,\quad 
\phi\in L^\infty_{\mathrm{loc}}(X,\mu).
\end{equation}
With respect to the topology defined by the family of seminorms 
$\{p_\lambda\}_{\lambda\in\Lambda}$, 
$L^\infty_{\mathrm{loc}}(X,\mu)$ is a locally $C^*$-algebra. Actually, we observe that we have 
a projective system of $C^*$-algebras made up by the $C^*$-algebras
$(L^\infty(X_\lambda,\mu_\lambda))_{\lambda\in\Lambda}$ and the projective system of $*$-morphisms
$(\Phi_{\lambda,\nu})_{\lambda\leq \nu}$ where, for $\lambda\leq \nu$ the $*$-morphism $\Phi_{\lambda,\nu}
\colon L^\infty(X_\nu,\mu_\nu)\ra L^\infty(X_\lambda,\mu_\lambda)$ is defined by $\phi\mapsto 
\phi|_{X_\lambda}$.
Then, the projective limit of this projective system of $C^*$-algebras, in the category of locally convex 
$*$-algebras, is canonically identified with $L^\infty_{\mathrm{loc}}(X,\mu)$.
\end{example}

Although an inductive limit of a strictly inductive system of measure spaces is not, in general, a measure 
space, in the following we show that it has a canonical extension to a measure space.

Consider the $\sigma$-algebra $\widetilde\Omega$, defined as in \eqref{e:lms}, and
define $\widetilde\mu\colon\widetilde\Omega\ra [0,+\infty]$ by
\begin{equation}\label{e:wimu}
\widetilde\mu(A):=\sup_{\lambda\in\Lambda} \mu_\lambda(A\cap X_\lambda),\quad 
A\in\widetilde\Omega.
\end{equation}

\begin{remark}\label{r:wimu}
Let $A\in\widetilde\Omega$ be arbitrary. Then, in view of the properties (sim1)--(sim3), the 
net $(\mu_\lambda(A\cap X_\lambda))_{\lambda\in\Lambda}$ is nondecreasing hence
\begin{equation*}
\widetilde\mu(A)=\begin{cases}+\infty, & \mbox{ if }(\mu_\lambda(A\cap X_\lambda))_{\lambda\in\Lambda}\mbox{ is unbounded,} \\
\lim\limits_{\lambda\in\Lambda}\mu_\lambda(A\cap X_\lambda), & \mbox{ if }
(\mu_\lambda(A\cap X_\lambda))_{\lambda\in\Lambda}\mbox{ is bounded}.
\end{cases}
\end{equation*}
\end{remark}

The following result was observed in \cite{KulkarniPamula} as well, 
with a different terminology and a slightly different proof.

\begin{proposition}\label{p:wimu}
$\widetilde\mu$ is a measure which extends $\mu$ defined at \eqref{e:lmsa}.
\end{proposition}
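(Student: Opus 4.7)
The plan is to verify the three ingredients that make $\widetilde\mu$ a measure: that it is $\sigma$-additive on $\widetilde\Omega$, that $\widetilde\mu(\emptyset)=0$, and that it agrees with $\mu$ on $\Omega$. The last two items should be quick; the interchange of $\sup$ and $\sum$ in the countable additivity argument will be the key step and the main obstacle.

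First I would record the monotonicity that was noted in Remark~4.9: for $\lambda\leq \nu$ in $\Lambda$ and any $B\in\widetilde\Omega$, one has $B\cap X_\lambda\in\Omega_\lambda\subseteq\Omega_\nu$ and $B\cap X_\lambda\subseteq B\cap X_\nu$, so by (sim3) and monotonicity of $\mu_\nu$,
\begin{equation*}
\mu_\lambda(B\cap X_\lambda)=\mu_\nu(B\cap X_\lambda)\leq \mu_\nu(B\cap X_\nu).
\end{equation*}
Hence the net $(\mu_\lambda(B\cap X_\lambda))_{\lambda\in\Lambda}$ is nondecreasing along the directed set, and its supremum coincides with its net-limit. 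This yields at once that if $A\in\Omega$, say $A\in\Omega_{\lambda_0}$, then for all $\lambda\geq\lambda_0$ we have $A\cap X_\lambda=A$ and $\mu_\lambda(A)=\mu_{\lambda_0}(A)=\mu(A)$, so $\widetilde\mu(A)=\mu(A)$; taking $A=\emptyset$ also gives $\widetilde\mu(\emptyset)=0$.

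For $\sigma$-additivity, let $(A_n)_{n\in\NN}$ be a disjoint sequence in $\widetilde\Omega$ with union $A$. Fix $\lambda\in\Lambda$. By the definition of $\widetilde\Omega$, each $A_n\cap X_\lambda\in\Omega_\lambda$, they are disjoint, and their union is $A\cap X_\lambda\in\Omega_\lambda$; since $\mu_\lambda$ is a measure,
\begin{equation*}
\mu_\lambda(A\cap X_\lambda)=\sum_{n\in\NN}\mu_\lambda(A_n\cap X_\lambda).
\end{equation*}
Taking the supremum over $\lambda$ gives $\widetilde\mu(A)=\sup_{\lambda}\sum_n\mu_\lambda(A_n\cap X_\lambda)$, while the right-hand side of the desired equality is $\sum_n\sup_{\lambda}\mu_\lambda(A_n\cap X_\lambda)$, so I must interchange $\sup_\lambda$ and $\sum_n$.

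The inequality $\sup_\lambda\sum_n\mu_\lambda(A_n\cap X_\lambda)\leq \sum_n\widetilde\mu(A_n)$ is immediate from termwise comparison. For the reverse, I use nonnegativity, monotonicity of the net, and directedness of $\Lambda$: for any $N\in\NN$ and any $\varepsilon>0$, for each $n=1,\dots,N$ choose $\lambda_n\in\Lambda$ with $\mu_{\lambda_n}(A_n\cap X_{\lambda_n})\geq \widetilde\mu(A_n)-\varepsilon/N$ (if $\widetilde\mu(A_n)=+\infty$, choose it $\geq K$ for any prescribed $K$, treating the case separately). By directedness pick $\lambda^\ast\geq \lambda_1,\dots,\lambda_N$; then by monotonicity $\mu_{\lambda^\ast}(A_n\cap X_{\lambda^\ast})\geq \mu_{\lambda_n}(A_n\cap X_{\lambda_n})$ for each $n\leq N$, so
\begin{equation*}
\sup_{\lambda}\sum_{n\in\NN}\mu_{\lambda}(A_n\cap X_{\lambda})\geq \sum_{n=1}^N\mu_{\lambda^\ast}(A_n\cap X_{\lambda^\ast})\geq \sum_{n=1}^N\widetilde\mu(A_n)-\varepsilon.
\end{equation*}
Letting $\varepsilon\downarrow 0$ and then $N\to\infty$ (with the obvious $+\infty$ modification when some $\widetilde\mu(A_n)$ is infinite or the partial sums diverge) yields $\widetilde\mu(A)\geq \sum_n\widetilde\mu(A_n)$, completing $\sigma$-additivity and the proof.
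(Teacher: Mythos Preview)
Your proof is correct and follows essentially the same route as the paper's: both verify the extension property via the monotonicity of the net $(\mu_\lambda(A\cap X_\lambda))_\lambda$, and both handle $\sigma$-additivity by interchanging $\sup_\lambda$ and $\sum_n$, with the nontrivial inequality obtained by choosing, for finitely many indices $n$, approximating $\lambda_n$'s and then a common upper bound in the directed set. The only cosmetic differences are that the paper uses $\epsilon/2^k$ instead of your $\epsilon/N$, and it disposes of the infinite case by assuming without loss of generality that $\widetilde\mu(\bigcup_n A_n)<\infty$ rather than treating it separately.
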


\begin{proof} Let $A\in\Omega$ be arbitrary. Then there exists $\lambda_0\in \Lambda$ such that
$A\in\Omega_{\lambda_0}$. For any $\lambda\in\Lambda$ there exists $\nu\in\Lambda$ such that
$\lambda_0,\lambda\leq\nu$, hence $\mu_{\lambda_0}(A)=\mu_\nu(A)$ and 
\begin{align*} \mu_\lambda(A\cap X_\lambda) & =\mu_\nu(A\cap X_\lambda) 
\leq \mu_\nu(A\cap X_\nu)=\mu_\nu(A)=\mu_{\lambda_0}(A). 
\end{align*}
This shows that 
\begin{equation*}
\widetilde\mu(A)=\sup_{\lambda\in\Lambda} \mu_\lambda(A)=\mu_{\lambda_0}(A)=\mu(A),
\end{equation*}
hence $\widetilde\mu$ extends $\mu$ defined at \eqref{e:lmsa}.

We now show that $\widetilde\mu$ is a measure. First we observe that, for arbitrary 
$A,B\in\widetilde\Omega$ with $A\subseteq B$, by \eqref{e:wimu} we have
\begin{equation}\label{e:temu}
\widetilde\mu(A)\leq \widetilde\mu(B).
\end{equation}
Let now $(A_n)_{n\in\NN}$ be a sequence of subsets in $\widetilde\Omega$ such that 
$A_n\cap A_k=\emptyset$ whenever $n\neq k$. We prove
\begin{equation}\label{e:temucup}
\widetilde\mu(\bigcup_{n\in\NN} A_n)=\sum_{n=1}^\infty \widetilde\mu(A_n).
\end{equation}
Indeed,
\begin{align*}
\widetilde\mu(\bigcup_{n=1}^\infty A_n) 
& =\sup_{\lambda\in\Lambda} \mu_\lambda(\bigcup_{n=1}^\infty
A_n\cap X_\lambda) = \sup_{\lambda\in\Lambda} \mu_\lambda(\bigcup_{n=1}^\infty
(A_n\cap X_\lambda)) \\
\intertext{and taking into account that $\mu_\lambda$ is $\sigma$-additive we get}
& = \sup_{\lambda\in\Lambda} \sum_{n=1}^\infty \mu_\lambda(A_n\cap X_\lambda) \\
& \leq \sum_{n=1}^\infty \sup_{\lambda\in\Lambda} \mu_\lambda(A_n\cap X_\lambda) =
\sum_{n=1}^\infty \widetilde\mu(A_n),
\end{align*}
hence one of the inequalities in \eqref{e:temucup} is proven.

In order to prove the second inequality in \eqref{e:temucup}, 
let us observe that, without any loss of generality, we can
assume $\widetilde\mu(\bigcup_{n=1}^\infty A_n)<\infty$ and hence, by \eqref{e:temu}, it follows
that $\widetilde\mu(A_n)<\infty$ for all $n\in\NN$. Then, for any $\epsilon>0$ and any $k\in\NN$
there exists $\lambda_k\in\Lambda$ such that
\begin{equation}\label{e:emu}
\widetilde\mu(A_k)-\frac{\epsilon}{2^k}<\mu_{\lambda_k}(A_k\cap X_{\lambda_k})\leq \widetilde
\mu(A_k).
\end{equation}
Fix $n\in\NN$. Then there exists $\lambda\in\Lambda$ such that $\lambda_1,\ldots,\lambda_n\leq
\lambda$. As a consequence of the properties (i)--(iii), it follows that
\begin{equation}\label{e:mulak}
\mu_{\lambda_k}(A_k\cap X_{\lambda_k})\leq \mu_\lambda(A_k\cap X_\lambda),\quad 
k=1,\ldots,n.
\end{equation}
Then, taking into account \eqref{e:emu} and twice of \eqref{e:mulak}, we have
\begin{align*}
\sum_{k=1}^n \widetilde\mu(A_k)-\epsilon & < \sum_{k=1}^n 
\bigl(\widetilde\mu(A_k)-\frac{\epsilon}{2^k}\bigr) \\
& < \sum_{k=1}^n \mu_{\lambda_k}(A_k\cap X_{\lambda_k}) \\
& \leq \sum_{k=1}^n \mu_\lambda(A_k\cap X_\lambda) \\
\intertext{which, taking into account the mutual disjointness of the sets $A_k$, $k=1,\ldots,n$,}
& = \mu_\lambda(\bigcup_{k=1}^n A_k\cap X_\lambda) \leq \widetilde\mu(\bigcup_{k=1}^n A_k)
\leq \widetilde\mu(\bigcup_{k=1}^\infty A_k).
\end{align*}
Consequently, we have
\begin{equation*}
\sum_{k=1}^n \widetilde\mu(A_k)< \widetilde\mu(\bigcup_{k=1}^\infty A_k) +\epsilon, \quad n\in\NN,
\end{equation*}
hence, letting $n$ approach $+\infty$ we get
\begin{equation*}
\sum_{k=1}^\infty \widetilde\mu(A_k)\leq \widetilde\mu(\bigcup_{k=1}^\infty A_k) +\epsilon,
\end{equation*}
and then, since $\epsilon>0$ is arbitrary, it follows that
\begin{equation*}
\sum_{k=1}^\infty \widetilde\mu(A_k)\leq\widetilde\mu(\bigcup_{k=1}^\infty A_k).
\end{equation*}
This finishes the proof of \eqref{e:temucup}.
\end{proof}

\subsection{Concrete Representing Locally Hilbert Spaces}\label{ss:crlhs}
With notation and assumptions as in the previous subsection, 
for each $\lambda\in\Lambda$ we consider the Hilbert space $L^2(X_\lambda,\mu_\lambda)$ viewed as a space 
of functions on $X$ in the following way.
\begin{align}
L^2(X_\lambda,\mu_\lambda) & :=\{f\colon X\ra \CC\mid 
\supp(f)\subseteq X_\lambda,\ f|_{X_\lambda}\mbox{ is } \Omega_\lambda\mbox{-measurable, }\nonumber \\
& \phantom{:=\ \ \ \ }
\int_{X_\lambda} |f(x)|^2\de\mu_\lambda(x)<\infty,\mbox{ identified } \mu_\lambda\mbox{-a.e.}\},
\label{e:leltwolambda}
\end{align}
endowed with the quadratic norm 
\begin{equation*} \|f\|_{L^2(X_\lambda,\mu_\lambda)}^2:= \int_{X_\lambda} |f(x)|^2\de\mu_\lambda(x),
\quad f\in L^2(X_\lambda,\mu_\lambda).
\end{equation*}
It is clear from this definition that $f\mapsto f|_{X_\lambda}$ is an isometric identification with the usual 
$L^2(X_\lambda,\mu_\lambda)$ space. This notation is useful for the definition of a concrete locally Hilbert 
space.

\begin{lemma}\label{l:letexem}
$(L^2(X_\lambda,\mu_\lambda))_{\lambda\in\Lambda}$, when these Hilbert spaces are viewed as spaces of 
functions defined on the whole set $X$ as before, is a representing strictly inductive system
of Hilbert spaces. \end{lemma}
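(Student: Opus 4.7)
The plan is to verify the two separate assertions: first that $(L^2(X_\lambda,\mu_\lambda))_{\lambda\in\Lambda}$ is a strictly inductive system of Hilbert spaces (conditions (lhs1)--(lhs4)), and then that it is representing in the sense of Definition~\ref{d:rep}, for which I will use the characterisation provided by Lemma~\ref{l:repof}.

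For the strictly inductive system part, (lhs1) is given and (lhs2) is the familiar fact that every $L^2$ space of a (decomposable) measure space is a Hilbert space. For (lhs3) and (lhs4), fix $\lambda\leq\nu$. Since $X_\lambda\in\Omega_\lambda\subseteq\Omega_\nu$ by (sim2), every function $f\in L^2(X_\lambda,\mu_\lambda)$ extends by zero to a $\Omega_\nu$-measurable function $\tilde f$ on $X_\nu$. By property (sim3) the measure $\mu_\nu$ restricted to $\Omega_\lambda$ agrees with $\mu_\lambda$, so a standard simple-function approximation argument gives
\begin{equation*}
\int_{X_\nu}|\tilde f|^2\,\de\mu_\nu = \int_{X_\lambda}|f|^2\,\de\mu_\lambda,
\end{equation*}
which simultaneously establishes the containment $L^2(X_\lambda,\mu_\lambda)\subseteq L^2(X_\nu,\mu_\nu)$ (via the extension-by-zero identification) and the fact that the inclusion is isometric, yielding (lhs4). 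The transitivity for $\lambda\leq\nu\leq\eta$ is immediate since extending by zero from $X_\lambda$ to $X_\nu$ and then to $X_\eta$ gives the same function as extending directly.

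To establish the representing property, I would apply Lemma~\ref{l:repof}: given $\lambda,\nu\in\Lambda$ and $\epsilon\in\Lambda$ with $\lambda,\nu\leq\epsilon$, I need to show that the orthogonal projections $P_{\lambda,\epsilon}$ and $P_{\nu,\epsilon}$ in $\cB(L^2(X_\epsilon,\mu_\epsilon))$ onto $L^2(X_\lambda,\mu_\lambda)$ and $L^2(X_\nu,\mu_\nu)$ respectively commute. The key observation is that, under the extension-by-zero identification, $L^2(X_\lambda,\mu_\lambda)$ sits inside $L^2(X_\epsilon,\mu_\epsilon)$ as the subspace of (classes of) functions vanishing $\mu_\epsilon$-a.e.\ on $X_\epsilon\setminus X_\lambda$. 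Since $X_\lambda\in\Omega_\epsilon$, the multiplication operator $M_{\chi_{X_\lambda}}$ is a well-defined bounded Hermitian projection on $L^2(X_\epsilon,\mu_\epsilon)$ with range exactly this subspace, so $P_{\lambda,\epsilon}=M_{\chi_{X_\lambda}}$ and similarly $P_{\nu,\epsilon}=M_{\chi_{X_\nu}}$.

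The conclusion is then automatic: multiplication operators by bounded measurable functions commute pointwise, so
\begin{equation*}
P_{\lambda,\epsilon}P_{\nu,\epsilon} = M_{\chi_{X_\lambda}\chi_{X_\nu}} = M_{\chi_{X_\lambda\cap X_\nu}} = M_{\chi_{X_\nu}\chi_{X_\lambda}} = P_{\nu,\epsilon}P_{\lambda,\epsilon}.
\end{equation*}
By Lemma~\ref{l:repof} the system is representing. The only step that requires care is the identification of $P_{\lambda,\epsilon}$ with $M_{\chi_{X_\lambda}}$; this is not a genuine obstacle but does hinge on correctly using (sim2) to ensure $X_\lambda\in\Omega_\epsilon$ and (sim3) to match the $L^2$-norms, which is why the strictly inductive hypothesis cannot be weakened here.
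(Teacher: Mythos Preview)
Your proof is correct and follows essentially the same route as the paper: both verify (lhs3)--(lhs4) via extension by zero together with (sim2)--(sim3), and both establish the representing property through Lemma~\ref{l:repof}. The only cosmetic difference is that the paper writes out the four-piece disjoint decomposition $X_\epsilon=(X_\lambda\setminus X_\nu)\cup(X_\lambda\cap X_\nu)\cup(X_\nu\setminus X_\lambda)\cup(X_\epsilon\setminus(X_\lambda\cup X_\nu))$ and the corresponding orthogonal splitting of $L^2(X_\epsilon,\mu_\epsilon)$ to see the commutation, whereas you identify $P_{\lambda,\epsilon}=M_{\chi_{X_\lambda}}$ directly and appeal to commutativity of multiplication operators; these are two phrasings of the same observation.
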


\begin{proof} Indeed, let $\lambda,\nu\in\Lambda$ be such that $\lambda\leq \nu$. For any 
$\phi\in L^2(X_\lambda,\mu_\lambda)$ we have $\supp(\phi)\subseteq X_\lambda\subseteq X_\nu$. Also,
let us denote $\phi_\lambda=\phi|_{X_\lambda}$ and $\phi_\nu=\phi|_{X_\nu}$. We show that $\phi_\nu$ is 
$\Omega_\nu$-measurable.
For any Borel subset of $\CC$, in view of the properties by (sim1)--(sim2), we have
\begin{align*}\phi_\nu^{-1}(S) & =\phi_\nu^{-1}((S\setminus\{0\})\cup \{0\}) \\
& =\phi_\nu^{-1}(S\setminus\{0\})\cup \phi_\nu^{-1}(\{0\}) \\
& =\phi_\lambda^{-1}(S\setminus\{0\}) \cup \phi^{-1}_\lambda(\{0\})\cup (X_\nu\setminus X_\lambda)\\
& = \phi_\lambda^{-1}(S)\cup (X_\nu\setminus X_\lambda)\in \Omega_\nu. 
\end{align*}
 
Further, by (sim3) it follows that, when considering the equivalence class of $\phi_\nu$ with respect to
$\mu_\nu$, it follows that
$\phi_\nu\in L^2(X_\nu,\mu_\nu)$ extends $\phi_\lambda$ and
\begin{equation*}
\int_{X_\nu} |\phi_\nu(x)|^2\de\mu_\nu(x)=\int_{X_\lambda}|\phi(x)|^2\de\mu_\lambda(x).
\end{equation*}
In this way, we see that $L^2(X_\lambda,\mu_\lambda)$ is isometrically included in 
$L^2(X_\nu,\mu_\nu)$.

We now prove that the strictly inductive system of Hilbert spaces 
$(L^2(X_\lambda,\mu_\lambda))_{\lambda\in\Lambda}$ is representing. To this end, let 
$\lambda,\nu,\epsilon\in\Lambda$ be such that $\lambda,\nu\leq \epsilon$. Then we have 
$X_\lambda,X_\nu\subseteq X_\epsilon$ and all these are elements in $\Omega_\epsilon$. We have the 
decomposition of $X_\epsilon$ with mutually disjoint measurable sets
\begin{equation*} X_\epsilon = (X_\lambda\setminus (X_\lambda\cap X_\nu))\cup (X_\nu \setminus 
(X_\lambda\cap X_\nu))\cup (X_\lambda\cap X_\nu)\cup (X_\epsilon \setminus (X_\lambda\cup X_\nu)).
\end{equation*}
From here, with the identification of $L^2(X_\lambda,\mu_\lambda)$ with $L^2(X_\lambda,\mu_\lambda)$ 
explained before, we have the following decomposition
\begin{align*}
L^2(X\epsilon,\mu_\epsilon) & = L^2(X_\lambda\setminus (X_\lambda\cap X_\nu),\mu_\lambda)\oplus 
L^2(X_\nu \setminus (X_\lambda\cap X_\nu),\mu_\nu)\oplus \\
& \phantom{=}\,\, \oplus L^2(X_\lambda\cap X_\nu,\mu_\lambda)\oplus L^2(X_\epsilon \setminus (X_\lambda\cup X_\nu),\mu_\epsilon).
\end{align*}
From here, it follows that the orthogonal projection of $L^2(X_\epsilon,\mu_\epsilon)$ onto 
$L^2(X_\lambda,\mu_\lambda)$ and the orthogonal projection of $L^2(X_\epsilon,\mu_\epsilon)$ onto 
$L^2(X_\nu,\mu_\nu)$ can be explicitly written and proven that they commute. In view of Lemma~\ref{l:repof},
we conclude that the strictly inductive system of Hilbert spaces 
$(L^2(X_\lambda,\mu_\lambda))_{\lambda\in\Lambda}$ is representing.
\end{proof}

As a consequence of Lemma~\ref{l:letexem}, it makes perfectly sense that, when the Hilbert spaces 
$L^2(X_\lambda,\mu_\lambda)$ are viewed as spaces of functions defined on the whole set $X$, to define 
\begin{equation}\label{e:eltwoloc}
L^2_{\mathrm{loc}}(X,\mu):=\varinjlim_{\lambda\in\Lambda}L^2(X_\lambda,\mu_\lambda),
\end{equation}
the inductive limit of the strictly inductive system of Hilbert spaces
$(L^2(X_\lambda,\mu_\lambda))_{\lambda\in\Lambda}$, which is a representing locally Hilbert space. 
The representing locally Hilbert space $L^2_{\mathrm{loc}}(X,\mu)$ induced by the
inductive limit of a strictly inductive system of measure spaces 
$(X_\lambda,\Omega_\lambda,\mu_\lambda)_{\lambda\in\Lambda}$
is called \emph{concrete}.
 
 \begin{remark}\label{r:eltwoloc}
We identify the locally
Hilbert space $L^2_{\mathrm{loc}}(X,\mu)$ with the collection of $\mu$-a.e.\ equivalence classes of functions
$\phi\colon X\ra \CC$ such that for some $\lambda\in\Lambda$ we have 
$\phi|_{X_\lambda}\in L^2(X_\lambda,\mu_\lambda)$ and $\phi|_{(X\setminus X_\lambda)}=0$, $\mu$-a.e.
\end{remark}

As pointed out in Subsection~\ref{ss:gp}, each locally Hilbert space $\cH$ is isometrically and densely 
embedded in its completion to a Hilbert denoted by $\widetilde \cH$, where uniqueness is modulo a certain 
unitary operator, which 
is the inductive limit of the underlying strictly inductive system of Hilbert spaces within the category of Hilbert 
spaces with isometric morphisms, see Remark~\ref{r:lch}.
In the following we show that the above abstract completion can be made concrete for the case
of the locally Hilbert space $L^2_{\mathrm{loc}}(X,\mu)$: its completion 
can be realised as the Hilbert space $L^2(X,\widetilde\mu)$, where $\widetilde\mu$ is defined
as in \eqref{e:wimu}.

\begin{proposition}\label{p:eltwo}
The locally Hilbert 
space $L^2_{\mathrm{loc}}(X,\mu)$ is dense in the Hilbert space $L^2(X,\widetilde\mu)$.
\end{proposition}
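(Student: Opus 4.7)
The plan is to use that simple $\widetilde\mu$-integrable functions are dense in $L^2(X,\widetilde\mu)$ (a standard fact from measure theory, which holds for any measure space), and to approximate each such simple function in $L^2(X,\widetilde\mu)$-norm by functions belonging to some $L^2(X_\lambda,\mu_\lambda)$, extended by zero outside $X_\lambda$, which by Remark~\ref{r:eltwoloc} exactly describes the elements of $L^2_{\mathrm{loc}}(X,\mu)$ viewed as a subset of $L^2(X,\widetilde\mu)$. Before starting, I would note that $X_\lambda\in\widetilde\Omega$ for every $\lambda\in\Lambda$: given $\nu\in\Lambda$, pick $\epsilon\in\Lambda$ with $\lambda,\nu\leq\epsilon$; then $X_\lambda\in\Omega_\epsilon$, hence $X_\lambda\cap X_\nu\in\Omega_\nu$ by (sim2). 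Consequently, indicators of the form $\chi_{A\cap X_\lambda}$ with $A\in\widetilde\Omega$ are genuine elements of $L^2(X_\lambda,\mu_\lambda)$.

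The first real step is the following single-indicator approximation. Fix $A\in\widetilde\Omega$ with $\widetilde\mu(A)<\infty$. For each $\lambda\in\Lambda$ we have $A\cap X_\lambda\in\Omega_\lambda$, and since $\widetilde\mu$ extends $\mu$ by Proposition~\ref{p:wimu},
\begin{equation*}
\|\chi_A-\chi_{A\cap X_\lambda}\|_{L^2(X,\widetilde\mu)}^2
=\widetilde\mu(A\setminus X_\lambda)
=\widetilde\mu(A)-\mu_\lambda(A\cap X_\lambda).
\end{equation*}
By Remark~\ref{r:wimu} the net $(\mu_\lambda(A\cap X_\lambda))_{\lambda\in\Lambda}$ is nondecreasing with supremum $\widetilde\mu(A)$, so for any $\epsilon>0$ there exists $\lambda_0\in\Lambda$ such that $\|\chi_A-\chi_{A\cap X_\lambda}\|_{L^2(X,\widetilde\mu)}<\epsilon$ for all $\lambda\geq\lambda_0$. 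Since $\mu_\lambda(A\cap X_\lambda)\leq\widetilde\mu(A)<\infty$, the function $\chi_{A\cap X_\lambda}$ lies in $L^2(X_\lambda,\mu_\lambda)$, hence, after extension by zero to $X$, in $L^2_{\mathrm{loc}}(X,\mu)$ by Remark~\ref{r:eltwoloc}.

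The second step extends this to an arbitrary simple $\widetilde\mu$-integrable function $f=\sum_{i=1}^n c_i\chi_{A_i}$, where $A_i\in\widetilde\Omega$, $\widetilde\mu(A_i)<\infty$, and the $A_i$ are pairwise disjoint. Given $\epsilon>0$, apply the first step to each $A_i$ to obtain $\lambda_i\in\Lambda$ so that $\|\chi_{A_i}-\chi_{A_i\cap X_\lambda}\|_{L^2(X,\widetilde\mu)}<\epsilon/(n(|c_i|+1))$ for all $\lambda\geq\lambda_i$; then, using that $\Lambda$ is directed, select a common upper bound $\lambda^*\geq\lambda_1,\ldots,\lambda_n$. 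The function $f_{\lambda^*}:=\sum_{i=1}^n c_i\chi_{A_i\cap X_{\lambda^*}}$ then belongs to $L^2(X_{\lambda^*},\mu_{\lambda^*})\subseteq L^2_{\mathrm{loc}}(X,\mu)$ and satisfies $\|f-f_{\lambda^*}\|_{L^2(X,\widetilde\mu)}<\epsilon$ by the triangle inequality.

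The third step combines the two approximations: given $g\in L^2(X,\widetilde\mu)$ and $\epsilon>0$, pick a simple integrable $f$ with $\|g-f\|_{L^2(X,\widetilde\mu)}<\epsilon/2$, then pick $f_{\lambda^*}\in L^2_{\mathrm{loc}}(X,\mu)$ as above with $\|f-f_{\lambda^*}\|_{L^2(X,\widetilde\mu)}<\epsilon/2$, whence $\|g-f_{\lambda^*}\|_{L^2(X,\widetilde\mu)}<\epsilon$. The main conceptual obstacle is Step 1: verifying that the net convergence of $\mu_\lambda(A\cap X_\lambda)$ to $\widetilde\mu(A)$ (which is only a supremum over a directed set, not a countable limit) is genuinely a convergence in the directed-set sense; this is exactly what the monotonicity in Remark~\ref{r:wimu} supplies, and finiteness of $\widetilde\mu(A)$ is essential so that the subtraction $\widetilde\mu(A)-\mu_\lambda(A\cap X_\lambda)=\widetilde\mu(A\setminus X_\lambda)$ is meaningful.
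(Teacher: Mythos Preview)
Your proof is correct, but it takes a different route from the paper's. The paper does not pass through simple functions: for an arbitrary $\phi\in L^2(X,\widetilde\mu)$ it simply sets $\widetilde\phi_\lambda=\phi\cdot\chi_{X_\lambda}$, checks that $\widetilde\phi_\lambda\in L^2_{\mathrm{loc}}(X,\mu)$, and then computes
\[
\|\phi-\widetilde\phi_\lambda\|_2^2=\int_X|\phi|^2\chi_{X\setminus X_\lambda}\,\de\widetilde\mu\xrightarrow[\lambda\in\Lambda]{}0
\]
by dominated (really, monotone) convergence, using that $\chi_{X\setminus X_\lambda}\to 0$ pointwise along the net. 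Your argument instead reduces to indicators $\chi_A$ with $\widetilde\mu(A)<\infty$ and exploits directly the defining formula $\widetilde\mu(A)=\sup_\lambda\mu_\lambda(A\cap X_\lambda)$ together with Remark~\ref{r:wimu}. The paper's approach is shorter and handles all $L^2$ functions in one stroke; yours is slightly longer but entirely elementary, sidestepping any appeal to a net version of the Dominated Convergence Theorem.
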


\begin{proof} Let $\phi\in L^2(X,\widetilde\mu)$ be arbitrary. For any 
$\lambda\in\Lambda$ let $\phi_\lambda=\phi|_{X_\lambda}$. We show that $\phi_\lambda$ is
$\Omega_\lambda$-measurable. Indeed, for any Borel subset $B$ of the complex plain we have
\begin{equation*}
\phi_\lambda^{-1}(B)=\{x\in X_\lambda\mid \phi_\lambda(x)\in B\}
= \{x\in X_\lambda\mid \phi(x)\in B\}=\phi^{-1}(B)\cap X_\lambda,
\end{equation*}
hence, taking into account that $\phi$ is $\widetilde\Omega$-measurable and the definition of 
$\widetilde\Omega$, see \eqref{e:tomega}, it follows that $\phi_\lambda^{-1}(B)\in\Omega_\lambda$.

Let $\widetilde\phi_\lambda\colon X\ra\CC$ be the canonical extension of $\phi_\lambda$ defined by
$\widetilde\phi_\lambda|_{X_\lambda}=\phi_\lambda$ and 
$\widetilde\phi_\lambda|_{(X\setminus X_\lambda)}=0$. Then $\widetilde\phi_\lambda$ is 
$\Omega$-measurable. Since $\widetilde\Omega\supseteq \Omega$ it follows that $\widetilde\phi_\lambda$
is $\widetilde\Omega$-measurable as well.
Then,
\begin{equation}\label{e:intefi}
\int_X |\widetilde\phi_\lambda(x)|^2\de\widetilde\mu(x) = \int_X |\phi(x)|^2 \chi_{X_\lambda}(x)\de
\widetilde\mu(x)\leq \int_X |\phi(x)|^2\de\widetilde\mu(x)<+\infty.
\end{equation}
In view of Remark~\ref{r:eltwoloc}, it
follows that $\widetilde\phi_\lambda\in L^2_{\mathrm{loc}}(X,\mu)$, for all $\lambda\in\Lambda$.

On the other hand, since the net of functions 
$(\chi_{X\setminus X_\lambda})_{\lambda\in\Lambda}$ converges pointwise to $0$ and $\phi$ is
finite $\widetilde\mu$-a.e., in view
of \eqref{e:intefi} and the Dominated Convergence Theorem, it follows that
\begin{equation*}
\|\phi-\widetilde\phi_\lambda\|_2^2 = \int_X |\phi(x)-\widetilde\phi_\lambda(x)|^2\de\widetilde\mu(x)
= \int_X |\phi(x)|^2 \chi_{X\setminus X_\lambda}(x)\de\widetilde\mu(x)
\xrightarrow[\lambda\in\Lambda]{ } 0.
\end{equation*}
Thus, we have proven that the space $L^2_{\mathrm{loc}}(X,\mu)$ is dense in the Hilbert space 
$L^2(X,\widetilde\mu)$.
\end{proof}

\begin{remark} By Proposition~\ref{p:eltwo}, one can view
$(L^2(X,\widetilde\mu),(L^2(X_\lambda,\mu_\lambda))_{\lambda\in\Lambda},L^2_{\mathrm{loc}}(X,\mu))$ as a 
commutative quantised domain in the sense of \cite{Dosi2}. Also, this allows us to view Lemma~\ref{l:letexem}
in a simpler fashion, namely to view the Hilbert spaces $L^2(X_\lambda,\mu_\lambda)$ as subspaces of the 
Hilbert space $L^2(X,\widetilde \mu)$, a fact which was specified in the paragraph before Lemma~\ref{l:letexem} 
in a rather more involved fashion.
\end{remark}

We need a result slightly more general than Lemma~\ref{l:letexem}. The proof follows the same lines and we 
omit it.

\begin{proposition}\label{p:tensorprod} Let $\cG$ be a Hilbert space, 
$(X_\lambda,\Omega_\lambda,\mu_\lambda)_{\lambda\in\Lambda}$ be a strictly inductive system of measure 
spaces and let $(X,\Omega,\mu)$ be its inductive limit. If $\cG$ is a Hilbert space, for each 
$\lambda\in\Lambda$ we denote by $L^2_\cG(X_\lambda,\mu_\lambda)$ the collection of all functions 
$f\colon X\ra \cG$ such that $\supp(f)\subseteq X_\lambda$, $f|_{X_\lambda}$ is measurable with respect to
$(X_\lambda,\Omega_\lambda)$, $\int_{X_\lambda} \|f(x)\|_\cG^2\de\mu_\lambda(x)<\infty$, and identified 
$\mu_\lambda$-a.e.

Then, the net $(L^2_\cG(X_\lambda,\mu_\lambda))_{\lambda\in\Lambda}$ is a representing 
strictly inductive system of Hilbert spaces and, consequently, its inductive limit
\begin{equation}
L^2_{\cG,\mathrm{loc}}(X,\mu):=\varinjlim_{\lambda\in\Lambda} L_\cG^2(X_\lambda,\mu_\lambda),
\end{equation}
exists and it is a representing locally Hilbert space.
\end{proposition}

\begin{remark}\label{r:tensorprod}  With notation as in Proposition~\ref{p:tensorprod}, for each 
$\lambda\in\Lambda$ there is a natural identification of the Hilbert spaces $L^2_\cG(X_\lambda,\mu_\lambda)$ 
and $\cG\otimes L^2(X_\lambda,\mu_\lambda)$. On elementary tensors this identification acts by 
$\cG\otimes L^2(X_\lambda,\mu_\lambda)\ni g\otimes f\mapsto f g\in L^2_\cG(X_\lambda,\mu_\lambda)$.
\end{remark}

\subsection{An Example: The Hata Tree-Like self-similar Set}\label{ss:hata}

If $(M;d)$ is a metric space, a map $f\colon M\ra M$ is called a \emph{contraction} if there exists 
$\alpha \in [0,1)$ such that $d(f(x_1),f(x_2))\leq \alpha\, d(x_1,x_2)$ for all
$x_1,x_2\in M$. The least $\alpha\in [0,1)$ for which this is true is called the \emph{Lipschitz constant} of $f$.
Also, $f$ is called a \emph{similitude} if $d(f(x_1),f(x_2))= \alpha\, d(x_1,x_2)$ for some $\alpha> 0$ and 
all $x_1,x_2\in M$. Note that similitudes are injective functions.
Given $N\in\NN$, a set $\{f_j\mid j=1,\ldots,N\}$ of contractions on the metric space $(M;d)$ 
is called an \emph{iterated function system} on $M$.  
A subset $X\subseteq M$ is called a \emph{self-similar set} with 
respect to the iterated function system $\{f_j\mid j=1,\ldots,N\}$ if
\begin{equation}\label{e:self-similar}
X=\bigcup_{j=1}^N f_j(X).
\end{equation}

\begin{theorem}[J.E.~Hutchinson \cite{Hutchinson}]\label{t:hutchinson}
 Let $(M;d)$ be a complete metric space and let 
$\{f_j\mid j=1,\ldots,N\}$ be an iterated function 
system on $M$. Then there exists a unique nonempty compact subset $X\subseteq M$ 
which is self-similar with respect to this iterated function system.
\end{theorem}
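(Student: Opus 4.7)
The plan is to prove this by Banach's fixed point theorem applied to the Hutchinson operator on the hyperspace of nonempty compact subsets of $M$, equipped with the Hausdorff metric.

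First I would set the stage by introducing the hyperspace $\mathcal{K}(M)$ of all nonempty compact subsets of $M$, endowed with the Hausdorff metric
\begin{equation*}
d_H(A,B)=\max\bigl\{\sup_{a\in A}\inf_{b\in B} d(a,b),\ \sup_{b\in B}\inf_{a\in A} d(a,b)\bigr\},\quad A,B\in\mathcal{K}(M).
\end{equation*}
The standard fact here, which I would cite, is that $(\mathcal{K}(M),d_H)$ is a complete metric space whenever $(M,d)$ is complete. This is the classical completeness theorem for the Hausdorff metric, usually proved by taking a Cauchy sequence $(A_n)$ in $\mathcal{K}(M)$ and exhibiting its limit as the set of all limits of convergent sequences $(x_n)$ with $x_n\in A_n$; the hard estimates there are routine.

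Next, I would define the Hutchinson operator $F\colon \mathcal{K}(M)\ra \mathcal{K}(M)$ by
\begin{equation*}
F(A):=\bigcup_{j=1}^N f_j(A),\quad A\in\mathcal{K}(M).
\end{equation*}
Since each $f_j$ is a contraction, hence continuous, each $f_j(A)$ is compact, and a finite union of compact sets is compact, so $F$ is well defined. The main step is to show that $F$ is a contraction on $(\mathcal{K}(M),d_H)$ with Lipschitz constant $\alpha:=\max_{1\leq j\leq N}\alpha_j<1$, where $\alpha_j\in[0,1)$ is the Lipschitz constant of $f_j$. The key computational lemma is twofold: first, for any single contraction $f$ with Lipschitz constant $\alpha_f$ one has $d_H(f(A),f(B))\leq \alpha_f\, d_H(A,B)$, which follows immediately from the definition of $d_H$; second, the union behaves well, namely
\begin{equation*}
d_H\bigl(\bigcup_{j=1}^N A_j,\bigcup_{j=1}^N B_j\bigr)\leq \max_{1\leq j\leq N} d_H(A_j,B_j),
\end{equation*}
which is also a straightforward unpacking of the supremum/infimum definitions. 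Combining these two estimates yields $d_H(F(A),F(B))\leq \alpha\, d_H(A,B)$.

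Finally, I would invoke the Banach fixed point theorem: $F$ is a contraction on the complete metric space $(\mathcal{K}(M),d_H)$, so it admits a unique fixed point $X\in\mathcal{K}(M)$. By construction, $X$ is nonempty and compact, and the fixed point equation $F(X)=X$ is precisely \eqref{e:selfsimilar}. Uniqueness of $X$ as a compact selfsimilar set follows because any such set is a fixed point of $F$, and Banach's theorem guarantees uniqueness of the fixed point. I expect no serious obstacle here; the only part requiring genuine care is the hyperspace completeness lemma, which is classical, and the contraction estimate, which is a short exercise on suprema.
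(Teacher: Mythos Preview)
Your proposal is correct and follows essentially the same route as the paper's own sketch: introduce the hyperspace of nonempty compact subsets with the Pompeiu--Hausdorff metric, cite its completeness, define the Hutchinson operator $F$, show $F$ is a contraction with constant $\max_j \alpha_j$, and apply Banach's fixed point theorem. The only superficial differences are notational (the paper writes $\cC(M)$ and $d_{\mathrm{PH}}$ and uses the ball-covering form of the Hausdorff distance), and the paper additionally remarks that the proof of the Contraction Theorem yields the approximating sequence $(F^n(X_0))_n$ from any seed $X_0$.
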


It is useful to recall the idea of the proof, which is constructive to a large extent. Let $\cC(M)$ be the collection
of all nonempty compact subsets of $M$ and endow it with the \emph{Pompeiu-Hausdorff metric}
\begin{equation*}
d_{\mathrm{PH}}(X,Y):=\inf \{r>0\mid X\subseteq \bigcup_{x\in Y} \overline{B}_r(x),\  
Y\subseteq \bigcup_{x\in X} \overline{B}_r(x)\},
\end{equation*}
where $\overline{B}_r(x):=\{y\in M\mid d(x,y)\leq r\}$ is the \emph{closed ball} of center $x$ and radius $r$. It is
proven that, with respect to the metric $d_{\mathrm{PH}}$ the space $\cC(M)$ is complete. Then, since any 
contraction map is uniformly continuous, hence continuous, one can define 
$F\colon \cC(M)\ra \cC(M)$ by
\begin{equation}\label{e:fex}
F(X):=\bigcup_{j=1}^N f_j(X).
\end{equation}
One can prove that $F$ is a contraction map as well, more precisely, if $\alpha_j$ is the Lipschitz constant 
of $f_j$, $j=1,\ldots, N$, then
\begin{equation*}
d_{\mathrm{PH}} (F(X),F(Y))\leq \max\{\alpha_j\mid j=1,\ldots,N\} d_{\mathrm{PH}}(X,Y),\quad X,Y\in\cC(M).
\end{equation*}
Finally, by the Contraction Theorem, $F$ has a unique fixed point $X$, and this is the self-similar set
with respect to the iterated function system $\{f_j\mid j=1,\ldots,N\}$. By the proof of the 
Contraction Theorem, one can start with any nonempty 
compact subset $X_0\subseteq M$, called the \emph{seed}, and then the sequence of compact 
sets $(F^n(X_0))_n$, where $F^n=F\circ \cdots \circ F$, the composition of $n$ copies of $F$, converges to $X$
with respect to the metric $d_{\mathrm{PH}}$.

Consider the functions $f_1,f_2\colon \CC\ra \CC$ defined by
\begin{equation}\label{e:fef}
f_1(z)=c\bar{z},\quad f_2(z)=(1-|c|^2)\bar z+|c|^2,\quad z\in \CC,
\end{equation}
where $c\in \CC$ is given. Note that, for any $z,\zeta\in\CC$ we have
\begin{equation*}
|f_1(z)-f_1(\zeta)|=|c| |z-\zeta|,\quad |f_2(z)-f_2(\zeta)|=|1-|c|^2| |z-\zeta|.
\end{equation*}
This shows that, both $f_1$ and $f_2$ are similitudes and, assuming that $0<|c|,|1-c|<1$, then both $f_1$ and 
$f_2$ are contractions, with 
Lipschitz constants $|c|$ and $|1-|c|^2|$, respectively. In order to avoid 
trivial cases, we assume that $c\not\in\RR$. We will assume these throughout this subsection.  By 
Theorem~\ref{t:hutchinson}, let $X$ denote the self-similar set
with respect to the iterated function system $\{f_1,f_2\}$, with definitions as in \eqref{e:fef}, and which is
called the \emph{Hata tree-like self-similar set}, \cite{Hata}. See Figure~\ref{f:hata14}, in order to have a first idea
of what this self-similar set looks like (the algorithm used to generate it is described below). Let us observe that 
the latter condition, $0<|1-c|<1$, is not necessary in order to guarantee the fact that both $f_1$ and $f_2$ are 
contractions but, without it, the geometry of the self-similar set $X$ would be slightly different.

\hspace{-10pt}\begin{figure}[h] 
\includegraphics[trim= 70 230 70 230,clip]{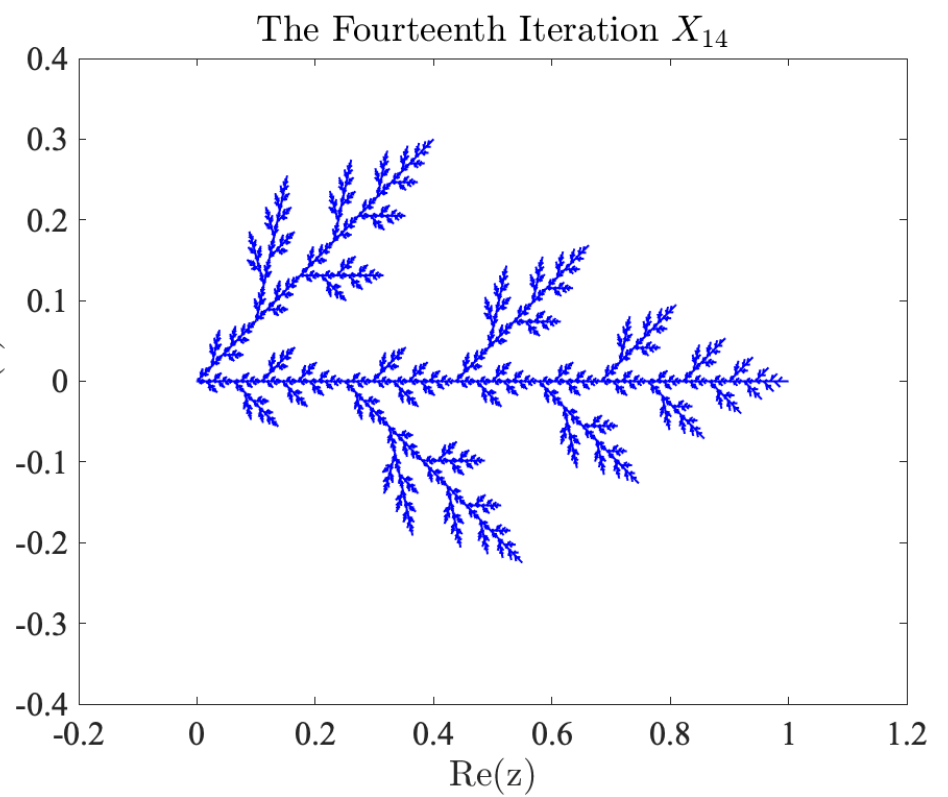}
\caption{The $14$th approximation in generating the Hata tree-like self-similar set, for $c=0.3+0.4\iac$
and the seed $X_0=[0,1]\cup c[0,1]$, provides 
an idea of the tree form of this self-similar set. The picture is obtained with a MATLAB code based on 
Algorithm 1.}
\label{f:hata14}
\end{figure}

Also, in this special case, the function 
$F\colon \cC(\CC)\ra \cC(\CC)$ from \eqref{e:fex} becomes 
\begin{equation}\label{e:fexa}
F(A)=f_1(A)\cup f_2(A),\quad A\in \cC(\CC).
\end{equation}
In the following we briefly describe the generation of the Hata tree-like self-similar set by approximation with an 
ascending sequence of compact sets. As a by-product, this 
will eventually help us to produce images that provide an idea of the set and why 
it is named like this.

Let $X_0=[0,1]\cup c[0,1]$. If $z,\zeta\in\CC$, we denote 
\begin{equation}\label{e:zeze}
[z,\zeta]=\{(1-t)z+t\zeta\mid t\in [0,1]\},\end{equation}
the segment of straight line in the complex plane with endpoints $z$ and $\zeta$. This segment inherits 
the orientation from $[0,1]$, so $z$ is the beginning and $\zeta$ is the end.
We observe that
\begin{equation}\label{e:exone}
f_1(X_0)=c[0,1]\cup [0,|c|^2]\mbox{ and } f_2(X_0)=[|c|^2,1]\cup [|c|^2,(1-|c|^2)\bar c+|c|^2], 
\end{equation}
which shows that, with notation as in \eqref{e:fexa}, we have
\begin{equation}\label{e:xez}
X_0\subseteq [0,1]\cup c[0,1]\cup [|c|^2,(1-|c|^2)\bar c+|c|^2]=
 f_1(X_0)\cup f_2(X_0)=F(X_0)=:X_1.
\end{equation}
The sequence of approximation of the Hata tree-like set $X$, as in the proof of Theorem~\ref{t:hutchinson}, is 
defined recursively by $X_{n+1}:=F(X_n)$, $n\geq 0$, hence, from \eqref{e:xez} we can prove by induction that
\begin{equation}\label{e:xena}
X_n\subseteq F(X_n)=X_{n+1},\quad n\geq 0,
\end{equation}
and then, for each fixed $n\in\NN$, we have
\begin{equation}\label{e:xenasub}
X_n\subseteq X_m,\quad m\geq n.
\end{equation}
For any $\epsilon>0$, since $(X_m)_m$ converges to $X$,
by the definition of the Pompeiu-Hausdorff metric, it follows that there exists $m\geq n$ and 
sufficiently large such that
\begin{equation*}
X_n\subseteq X_m \subseteq \bigcup_{x\in X}\overline{B}_\epsilon(x).
\end{equation*}
Since $\epsilon>0$ is arbitrary and $X$ is compact, this shows that
\begin{equation}\label{e:xenes}
X_n\subseteq X,\quad n\geq 0.
\end{equation}

From here, one can conclude that
\begin{equation}\label{e:oveb}
\overline{\bigcup_{n\geq 0} X_n}=X,
\end{equation}
where we denote by $\overline{E}$ the closure of the subset $E$ in $\CC$. 

We need some terminology from symbolic dynamics, following \cite{Kigami}. 
To keep things to a minimum, we confine our 
considerations to the special case related to the Hata tree-like self-similar set, so we consider the 
\emph{set of two symbols} $S=\{1,2\}$. For each $m\in\NN$ we denote by $W_m$ the set of all 
words of \emph{length} $m$ with symbols in $S$,
\begin{equation*}
W_m:=\{w_1\ldots w_m\mid w_1,\ldots, w_m\in S\},
\end{equation*}
and let $W_0=\{\emptyset\}$, where $\emptyset$ is the \emph{empty word}. Then $W$ is the 
\emph{set of all finite words}
\begin{equation*}
W:=\bigcup_{m\geq 0} W_m.
\end{equation*}
If $w\in W$ then we denote by $|w|$ its \emph{length}. By definition, the empty word has length $0$.

For any finite word $w=w_1w_2\ldots w_m$ we denote 
\begin{equation}\label{e:fewe}
f_w:=f_{w_1}\circ f_{w_2}\circ \cdots \circ f_{w_m},
\end{equation}  
where $f_1$ and $f_2$ are defined at \eqref{e:fef}. With this notation, let us observe that, for each $m\in \NN$ 
we have
\begin{equation}\label{e:xeme}
X_m=\bigcup_{w\in W_m} f_w(X_0),
\end{equation}
and then, from \eqref{e:oveb}, we get
\begin{equation*}
\overline{\bigcup_{w\in W}f_w(X_0)}=X.
\end{equation*}
Also, note that $W_m$ has exactly $2^m$ words and that the size of $W_m$ is doubled at each step.

For further reference, we calculate some of the first compositions of the functions $f_1$ and $f_2$ defined at
\eqref{e:fef}. First, $f_w$ for $|w|=2$.
\begin{align}
f_{11}(z) & = |c|^2 z, & f_{22}(z) & = (1-|c|^2)^2z+|c|^2(2-|c|^2),\label{e:few2}\\
f_{12}(z) & = c(1-|c|^2)z+c|c|^2, & f_{21}(z) & = (1-|c|^2)\bar c z+|c|^2,\nonumber
\end{align}
Secondly, $f_w$ for $|w|=3$.
\begin{align}
f_{111}(z) & = c|c|^2\bar z, & f_{122}(z) & = c(1-|c|^2)^2\bar z+c|c|^2(2-|c|^2),\label{e:few3}\\
f_{112}(z) & = c^2(1-|c|^2)\bar z +c^2|c|^2, & f_{121}(z) & = |c|^2(1-|c|^2)\bar z+c|c|^2,\nonumber\\
f_{222}(z) & = (1-|c|^2)^3\bar z +|c|^2(3-3|c|^2+|c|^4), & f_{211}(z) & = |c|^2(1-|c|^2)\bar z+|c|^2,\nonumber\\
f_{212}(z) & = \bar c(1-|c|^2)^2\bar z+(1-|c|^2)\bar c|c|^2+|c|^2, & f_{221}(z) & = (1-|c|^2)^2c\bar z+|c|^2(2-|c|^2).
\nonumber
\end{align}

Based on the discussion we had so far, we made an algorithm
described in Algorithm~\ref{a:hata}, which we can use in order to generate 
the sequence of approximations $(X_m)_{m\geq 0}$ and plot as many elements of the sequence as we want.
We briefly describe the setting of the algorithm.
Given arbitrary $c\in\CC\setminus\RR$, 
subject to the conditions $0<|c|,|1-c|<1$, let $N$ denote the maximal number of 
iterations, and let $s$ denote the number of samplings on each straight segment. We provide the functions 
$f_1$ and $f_2$ defined at \eqref{e:fef}. The seed $X_0=[0,1]\cup c[0,1]$ is made up by two complex 
vectors, $X01$ containing the sampling of $c[0,1]$ and $X02$ containing the sampling of $[0,1]$. Also, 
$m_1,\ldots,m_k\leq N$ denote  indices of the approximation subsets $X_{m_1},\ldots,X_{m_k}$ which we want 
to be plotted. The algorithm will construct the subsets of each approximating subset $X_m$ as rows of a matrix 
$X$, more precisely, $X_m$ is made by plots of
the rows $X(2^{m+1}-1,:)$ through $X(2^{m+2}-2,:)$. This is formally described in Algorithm~\ref{a:hata}.

\begin{algorithm}\label{a:alg1}
\caption{Generating the approximating sequence of subsets of the Hata tree-like self-similar set and plotting
some of these.}
\begin{algorithmic}[1]
\REQUIRE $c$, $N$, $s$, the functions $f_1$ and $f_2$ defined at \eqref{e:fef}, the seed $X_0$ made up by two complex vectors, $X01$  and $X02$, and 
$m_1,\ldots,m_k\leq N$.
\STATE Initiate by $0$ the matrix $X$ with $2^{N+3}$ rows and $s$ columns. \STATE Initiate the first row $X(1,:)\leftarrow X01$ and the second row $X(2,:)\leftarrow X02$.
\FOR{$m=1$ to $N$}
\FOR{$l=1$ to $2^m$}
\FOR{$k=1$ to $2$}
\IF{$k=1$} 
\STATE $X(l+2^{m+1}-2,:)\leftarrow  f_1(X(l+2^m-2,:))$
\ELSIF{$k=2$} 
\STATE $X(l+2^{m}+2^{m+1}-2,:) \leftarrow  f_2(X(l+2^m-2,:))$
\ENDIF
\ENDFOR
\ENDFOR
\ENDFOR
\RETURN Plot the subsets $X_{m_1},\ldots,X_{m_k}$.
\end{algorithmic}
\label{a:hata}
\end{algorithm}

To have a first idea on how the Hata tree-like self-similar set "grows" by the approximating sequence described 
above, we used this algorithm to 
make a MATLAB code by which we plot the seed $X_0$ and the approximation sets $X_1$ through $X_5$, see 
Figure~\ref{f:hata}. However, we can get a better idea on how the Hata tree-like self-similar set looks like by 
visualising the $14$th approximation set, see Figure \ref{f:hata14}.

Now a few theoretical facts. The following lemma is known but we include it for the reader's convenience.

\begin{lemma}\label{l:hatacon} With notation as before, for each $m\geq 0$ the set $X_m$ is connected and the 
Hata tree-like set $X$ is connected.
\end{lemma}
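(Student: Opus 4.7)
My plan is to prove connectedness by induction on $m$, then transfer it to $X$ by the representation $X=\overline{\bigcup_{n\geq 0}X_n}$ from \eqref{e:oveb}.

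For the base case, $X_0=[0,1]\cup c[0,1]$ is the union of two line segments that meet at $0$ (since $0\in[0,1]$ and $0=c\cdot 0\in c[0,1]$). Each segment is connected, and they have nonempty intersection, so $X_0$ is connected.

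For the inductive step, assume $X_m$ is connected. Since $f_1$ and $f_2$ are similitudes and hence continuous, the images $f_1(X_m)$ and $f_2(X_m)$ are each connected. The key observation is that both of these sets contain the point $|c|^2$. Indeed, on one hand $c\in c[0,1]\subseteq X_0\subseteq X_m$ (taking the endpoint $1\in[0,1]$ and noting $X_m\supseteq X_0$ by \eqref{e:xena}), and $f_1(c)=c\bar c=|c|^2$. On the other hand, $0\in X_0\subseteq X_m$ and $f_2(0)=(1-|c|^2)\cdot 0+|c|^2=|c|^2$. Therefore
\begin{equation*}
|c|^2\in f_1(X_m)\cap f_2(X_m),
\end{equation*}
so $X_{m+1}=F(X_m)=f_1(X_m)\cup f_2(X_m)$ is the union of two connected sets with nonempty intersection, hence is connected.

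Finally, for connectedness of $X$ itself: by \eqref{e:xenasub} the sequence $(X_n)_{n\geq 0}$ is increasing, so $\bigcup_{n\geq 0}X_n$ is an increasing union of connected sets (equivalently, each pair intersects in the nonempty set $X_0$), and is therefore connected. Taking the closure preserves connectedness, so by \eqref{e:oveb} the set $X=\overline{\bigcup_{n\geq 0}X_n}$ is connected.

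I do not anticipate a serious obstacle: the only moderately non-obvious ingredient is the identification of a common point of $f_1(X_m)$ and $f_2(X_m)$, which is supplied by the coincidence $f_1(c)=f_2(0)=|c|^2$ together with the fact that $0,c\in X_0$, and this is witnessed concretely in the explicit formulas \eqref{e:exone}, where $[0,|c|^2]\subseteq f_1(X_0)$ and $[|c|^2,1]\subseteq f_2(X_0)$ meet at $|c|^2$.
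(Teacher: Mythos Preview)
Your proof is correct and follows essentially the same route as the paper's: induction on $m$ using that $f_1(X_m)$ and $f_2(X_m)$ are connected continuous images sharing the point $|c|^2$, then passing to the closure of the increasing union for $X$. The only cosmetic difference is that you compute $f_1(c)=f_2(0)=|c|^2$ directly, whereas the paper cites the containment $f_1(X_m)\cap f_2(X_m)\supseteq f_1(X_0)\cap f_2(X_0)=\{|c|^2\}$.
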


\begin{proof} Clearly the set $X_0$ is connected. We prove by induction that $X_m$ is connected for all 
$m\geq 0$. For arbitrary $m\in\NN_0$, assume that we know that $X_m$ is connected. Then
\begin{equation*}
X_{m+1}=F(X_m)=f_1(X_m)\cup f_2(X_m).
\end{equation*}
Since $X_m$ is connected 
and both $f_1$ and $f_2$ are continuous, it follows that both $f_1(X_m)$ and $f_2(X_m)$ are 
connected. In order to conclude that $X_{m+1}$ is connected we only have to prove that 
$f_1(X_m)\cap f_2(X_m)\neq \emptyset$. Indeed, using \eqref{e:xenes} and \eqref{e:exone}, we have
\begin{equation*}
f_1(X_m)\cap f_2(X_m)\supseteq f_1(X_0)\cap f_2(X_0)=\{|c|^2\}\neq\emptyset.
\end{equation*}
From here we conclude that $\bigcup_{n\geq 0}X_n$ is a connected set and then $X$ is connected due to
\eqref{e:oveb}.
\end{proof}

Actually, more is true. The Hata tree-like self-similar set is arcwise connected, but this comes from a more 
general result in \cite{Hata}: if $K$ is any self-similar set then $K$ is connected if and only if it is arcwise 
connected, see also Theorem~1.6.2 in \cite{Kigami}.

There are at least two more facts that we have to explain about the Hata tree-like self-similar set. The first fact 
explains the tree structure of the Hata self-similar set. Graphically this can be seen if 
we plot the subsets $X_0$ through $X_n$ for $n$ sufficiently large (see
Figure~\ref{f:hata} which is obtained by an implementation of Algorithm~\ref{a:hata}), but, of course, we need an 
analytic proof of this fact. The second fact that we need is a 
parametrisation of the set $X$ that can be used to describe the topological structure and the measure space 
structure of $X$. These two facts are somehow related, more precisely, the first one will help us getting the latter 
one. In order to prove analytically the tree structure of the Hata set, we first need some definitions.

 Let $S$ be a connected set in $\CC$. A \emph{branch} 
of $S$ is a segment (of a straight line) contained in $S$ such that it is maximal, with respect to set inclusion, 
among all segments contained in $S$. Also, assume that the set $S$ of $\CC$ is obtained as a 
$2$-dimensional directed graph, more precisely, it is obtained from a finite set of points, the nodes, 
and between some pairs of them there are oriented edges that are segments. 
$S$ is called a \emph{binary tree} if: there is one node, the starting node, having no incoming edge and exactly 
two outgoing edges, some nodes, called the ending nodes, that have only one incoming edge and no outgoing 
edges, and all the other nodes have only one incoming edge and exactly two outgoing edges.

\hspace{-10pt}\begin{figure}[h]
\includegraphics[trim= 70 230 70 230,clip]{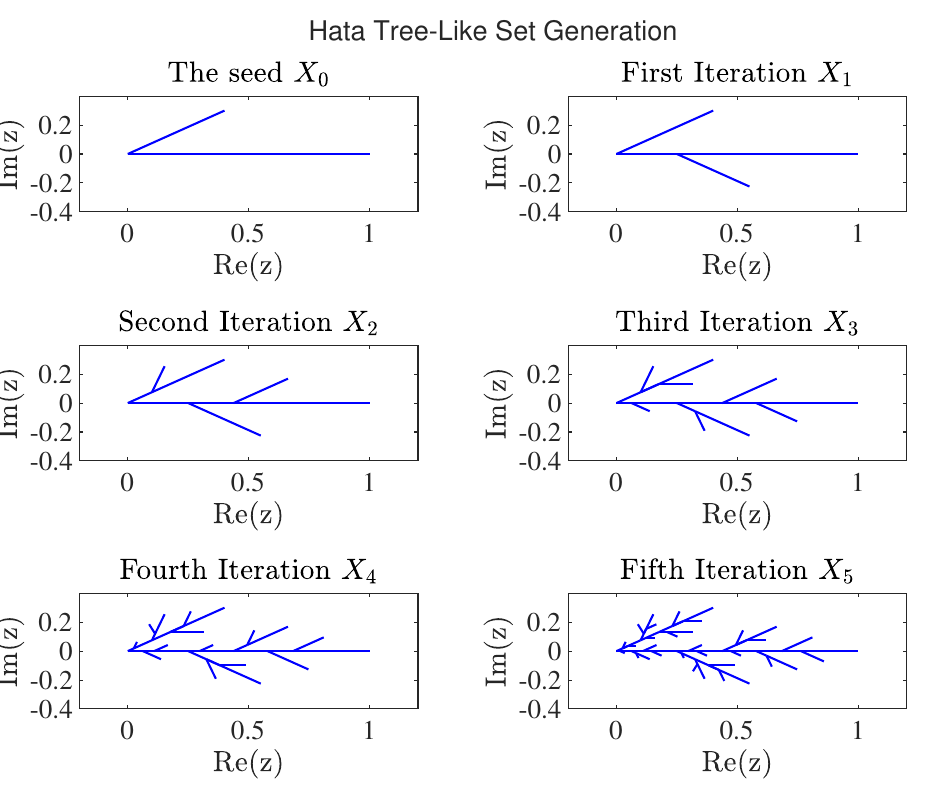}
\caption{The seed $X_0=[0,1]\cup c[0,1]$ 
and the first five approximation steps in generating the Hata tree-like self-similar set,
 for $c=0.3+0.4\iac$. $X_0$ has $2=2^0+1$ branches, $X_1$ has $3=2^1+1$ branches, $X_2$ has $5=2^2+1$ 
 branches, $X_3$ has $9=2^3+1$ branches, $X_4$ has $17=2^4+1$ branches, and $X_5$ has $33=2^5+1$ 
 branches. The pictures are obtained with a MATLAB code based on Algorithm~1.}
\label{f:hata}
\end{figure}

In the next proposition, for arbitrary $n\in\NN_0$, the approximating set $X_n$ is 
considered as an oriented graph, with orientation 
provided by \eqref{e:zeze} and the nodes provided by the intersections of pairs of segments, see 
Figure~\ref{f:hata}.

\begin{proposition}\label{p:hata}
 For each $n\in\NN_0$, the approximating set $X_n$ is a binary tree with the starting node at $0$, it has a 
number of exactly $2^n+1$ ending nodes and, consequently, when 
considered simply a subset of $\CC$, it has a number of exactly $2^n+1$ branches.

In addition, for each $n\in \NN$, the approximating set $X_n$ can be 
parameterised as the union of all its $2^n+1$ branches as follows
\begin{equation}\label{e:xenaz}
X_n=[0,1]\cup c(0,1]\cup \bigcup_{k=0}^{n-1} \bigcup_{w\in W_k} (f_{w2}(0),f_{w2}(c)].
\end{equation}
\end{proposition}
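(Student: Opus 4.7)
The plan is to establish the parametrisation \eqref{e:xenaz} by induction on $n$, and then read off from it both the branch count and the binary tree structure.

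For the induction, the base case $n=0$ is immediate once one notices that the indexed union in \eqref{e:xenaz} is empty and $0\in[0,1]$, so $[0,1]\cup c(0,1]=X_0$. For the inductive step, I would use $X_{n+1}=f_1(X_n)\cup f_2(X_n)$ and exploit two facts: (a) both $f_1$ and $f_2$ are $\RR$-affine maps of $\CC$ (the antiholomorphic part being complex conjugation), hence send each straight segment $[a,b]$ onto $[f_j(a),f_j(b)]$, giving in particular $f_j\bigl((f_{w2}(0),f_{w2}(c)]\bigr)=(f_{jw2}(0),f_{jw2}(c)]$; and (b) the concatenation map $\{1,2\}\times W_k\to W_{k+1}$, $(j,w)\mapsto jw$, is a bijection. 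Using the explicit evaluations $f_1([0,1])=[0,c]$, $f_1(c(0,1])=(0,|c|^2]$, $f_2([0,1])=[|c|^2,1]$, $f_2(c(0,1])=(|c|^2,f_2(c)]$, the union of the seed contributions simplifies as
\begin{equation*}
[0,c]\cup(0,|c|^2]\cup[|c|^2,1]\cup(|c|^2,f_2(c)]\;=\;[0,1]\cup c(0,1]\cup(f_2(0),f_2(c)],
\end{equation*}
while by (b) the recursive pieces reassemble into $\bigcup_{k=1}^{n}\bigcup_{w\in W_k}(f_{w2}(0),f_{w2}(c)]$; absorbing the new segment $(f_2(0),f_2(c)]$ as the $k=0$ contribution yields \eqref{e:xenaz} with $n+1$ in place of $n$.

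Once \eqref{e:xenaz} is available, counting the listed segments gives $2+\sum_{k=0}^{n-1}|W_k|=2+(2^n-1)=2^n+1$, matching the claimed number of branches. Taking $0$ as the starting node, I would identify the ending nodes as $1$, $c$, and $f_{w2}(c)$ for $w\in W_k$, $0\le k\le n-1$, yielding $2^n+1$ leaves, and the $2^n-1$ branching nodes as $f_{w2}(0)=f_w(|c|^2)$. The binary tree structure itself is then verified recursively: the point $|c|^2$ lies in the open segment $(0,1)\subset[0,1]$ and serves as the attachment point of the new branch $(|c|^2,f_2(c)]$ to $X_0$ within $X_1$; pushing this local picture forward by $f_w$ produces the analogous branching at each $f_w(|c|^2)$ in $X_n$. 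The half-open conventions in \eqref{e:xenaz} ensure that each branching node is counted in exactly one listed segment, consistently with the combinatorial identity for a binary tree, namely (number of leaves) $=$ (number of branching nodes) $+\,2$.

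The main obstacle is not the algebraic identity \eqref{e:xenaz}, which falls out of the inductive computation above, but the geometric statement that the listed half-open segments are pairwise non-collinear, so that each of them is really a \emph{maximal} line segment, that is, a branch in the sense of the proposition, inside $X_n$. Unfolding the compositions, the direction vector of $(f_{w2}(0),f_{w2}(c)]$ is a monomial in $c$ and $\bar c$, whose alternating pattern of conjugations is dictated by the conjugations built into $f_1$ and $f_2$, multiplied by $(1-|c|^2)\bar c$ (and possibly followed by an overall conjugation, according to the parity of $|w|$). The hypothesis $c\notin\RR$ together with $0<|c|,|1-c|<1$ provides the rigidity needed to rule out accidental coincidences of the underlying lines, but a careful case analysis of these monomial directions and their base points $f_w(|c|^2)$ is what completes the proof.
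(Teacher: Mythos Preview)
Your inductive derivation of \eqref{e:xenaz} is correct and is in fact cleaner than the paper's argument. The paper proceeds by working out the cases $n=0,1,2,3$ in full detail, then establishes the recursive identity $X_{n+1}=X_n\cup\bigcup_{w\in W_n}f_{w2}(X_{01})$ (its equation \eqref{e:xenaplus}) by tracking which of the four families $f_{w1}(X_{00})$, $f_{w1}(X_{01})$, $f_{w2}(X_{00})$, $f_{w2}(X_{01})$ reassemble into $X_n$ and which produce genuinely new branches, and only then telescopes to obtain \eqref{e:xenaz}. You bypass all of this by applying $f_1$ and $f_2$ directly to the parametrised form and using the bijection $\{1,2\}\times W_k\to W_{k+1}$; this is a more economical route to the same formula.

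On the geometric side --- that the listed segments are pairwise non-collinear and hence maximal --- you are right to flag this as the real content, and you are also more explicit about it than the paper, which handles it by the accumulated evidence of the low-case computations together with the observation that each new segment $f_{w2}(X_{01})$ is attached at a single interior point $f_w(|c|^2)$ of an existing branch. Your sketch via the monomial direction vectors in $c,\bar c$ is the right shape for a self-contained argument; neither you nor the paper carries this out in full, so your level of rigour here matches the original.
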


\begin{proof} We prove the statement by induction. We split the set $X_0$ into its two branches $X_{00}=[0,1]$
and $X_{01}=c[0,1]$. For $n=0$ it is clear from the definition that $X_0$ is a binary tree and 
consists of $2=2^0+1$ branches, the segment $[0,1]$ and the segment $c[0,1]$, both of them having in common 
the point $0$. We will examine two more steps in order to get the general idea of proof.

 For $n=1$ we have
\begin{align*}
f_1(X_{00}) & =[0,c], & f_1(X_{01}) & =[0,|c|^2],\\
f_2(X_{00}) & =[|c|^2,1], & f_2(X_{01}) & = [|c|^2,(1-|c|^2)\bar c+|c|^2],
\end{align*} 
and then we see from here that $X_1$ is the union of four segments, three of them make $X_0$, more 
precisely, $f_1(X_{00})=X_{01}$ and $f_1(X_{01})\cup f_2(X_{00})=X_{00}$, while $f_2(X_{01})$ is the new 
branch added to $X_1$ which is not in $X_0$. In addition, the new branch $f_2(X_{01})$ is a subbranch of
the main branch $X_{00}$ which starts at the point $|c|^2$.
So,  $X_1$ has $3=2^1+1$ branches and it is a binary tree. 

For $n=2$, with notation as in \eqref{e:fewe}, from \eqref{e:few2} we have the following segments.
\begin{align}
f_{11}(X_{00}) & = |c|^2[0,1], & f_{11}(X_{01}) & = |c|^2c[0,1],\label{e:femulte2} \\
f_{21}(X_{00}) & = [|c|^2,(1-|c|^2\bar c+|c|^2], &  f_{21}(X_{01}) & =[|c|^2,2|c|^2-|c|^4],\nonumber\\
f_{22}(X_{00}) & = [2|c|^2-|c|^4,1],  &  f_{22}(X_{01}) & = [2|c|^2-|c|^4,(1-|c|^2)^2c +2|c|^2-|c|^4],\nonumber\\
f_{12}(X_{00}) & = c[|c|^2,1], & f_{12}(X_{01}) & = [c|c|^2,(1-|c|^2)c^2+c|c|^2].\nonumber
\end{align}
From here we see that $X_2$ is made of the three branches of $X_1$ as follows: the main branch 
$X_{00}=[0,1]=f_{11}(X_{00})\cup f_{21}(X_{01})\cup f_{12}(X_{00})$, the  second branch 
$X_{01}=c[0,1]=f_{11}(X_{01})\cup f_{12}(X_{00})$, and the third branch $f_2(X_{01})=f_{21}(X_{00})$. 
There are two more branches in $X_2$ which do not belong to $X_1$ and these are: $f_{22}(X_{01})$, which
is a subbranch of the main branch $X_{00}$ and starts at $2|c|^2-|c|^4$, and $f_{12}(X_{01})$ which is a 
subbranch of the branch $X_{01}$ and starts at $c|c|^2$. So, $X_2$ has $5=2^2+1$ branches and 
is a  binary tree.

For $n=3$, with notation as in \eqref{e:fewe}, from \eqref{e:few3} we have the following segments.
\begin{align}
f_{111}(X_{00}) & = [0, c|c|^2],\label{e:femulte30}\\
f_{122}(X_{00}) & = [c|c|^2(2-|c|^2),c(1-|c|^2)^2+c|c|^2(2-|c|^2)],\nonumber\\
f_{112}(X_{00}) & = [c^2|c|^2,c^2(1-|c|^2) +c^2|c|^2],\nonumber\\
f_{121}(X_{00}) & = [c|c|^2,|c|^2(1-|c|^2)+c|c|^2],\nonumber\\
f_{222}(X_{00}) & = [|c|^2(3-3|c|^2+|c|^4),(1-|c|^2)^3 +|c|^2(3-3|c|^2+|c|^4)],\nonumber\\
f_{211}(X_{00}) & = [|c|^2,|c|^2(1-|c|^2)+|c|^2],\nonumber\\
f_{212}(X_{00}) & = [(1-|c|^2)\bar c|c|^2+|c|^2,\bar c(1-|c|^2)^2+(1-|c|^2)\bar c|c|^2+|c|^2],\nonumber\\ 
f_{221}(X_{00}) & = [|c|^2(2-|c|^2),(1-|c|^2)^2c+|c|^2(2-|c|^2)].
\nonumber
\end{align}
and
\begin{align}
f_{111}(X_{01}) & = [0, |c|^4],\label{e:femulte31}\\
f_{122}(X_{01}) & = [c|c|^2(2-|c|^2),|c|^2(1-|c|^2)^2+c|c|^2(2-|c|^2)],\nonumber\\
f_{112}(X_{01}) & = [c^2|c|^2,c|c|^2(1-|c|^2) +c^2|c|^2],\nonumber\\
f_{121}(X_{01}) & = [c|c|^2,|c|^2(1-|c|^2)\bar c+c|c|^2],\nonumber\\
f_{222}(X_{01}) & = [|c|^2(3-3|c|^2+|c|^4),(1-|c|^2)^3\bar c +|c|^2(3-3|c|^2+|c|^4)],\nonumber\\
f_{211}(X_{01}) & = [|c|^2,|c|^2(1-|c|^2)\bar c+|c|^2],\nonumber\\
f_{212}(X_{01}) & = [(1-|c|^2)\bar c|c|^2+|c|^2,(\bar c)^2(1-|c|^2)^2+(1-|c|^2)(\bar c)^2|c|^2+|c|^2],\nonumber\\ 
f_{221}(X_{01}) & = [|c|^2(2-|c|^2),(1-|c|^2)^2|c|^2+|c|^2(2-|c|^2)].
\nonumber
\end{align}
A careful examination of \eqref{e:femulte30} and \eqref{e:femulte31} shows that only the segments 
$f_{w2}(X_{01})$ bring new branches in $X_3$ and these new branches start at points on the branches of 
$X_2$. So, $X_3$ has $9=2^3+1$ branches and is a binary tree.

For the general step of induction, assume that the statement holds for $X_0$ through $X_n$. First, let us 
observe that both $f_1$ and $f_2$ are affine functions in $\bar z$ and, consequently, for any $w\in W$, the 
function $f_w$ is an affine function in either $z$, if $|w|$ is even, or $\bar z$, if $|w|$ is odd.
Since $X_0$ is a connected union of two segments, $X_{00}$ and $X_{01}$, 
for any $w\in W$ the set  $f_w(X_0)$ is a connected 
finite union of at most $2^{|w|+1}$ segments.
Then, 
$X_{n+1}=F(X_n)=f_1(X_n)\cup f_2(X_n)$ is a connected set, by Lemma~\ref{l:hatacon}, a union of at most 
$2^{n+2}$ segments. The set 
$X_{n+1}$ contains $X_n$, see \eqref{e:xena}, which has $2^n+1$ branches. We represent 
$2^n+1=(2^{n-1}+1)+2^{n-1}$ in order to point out that $X_{n-1}$ has $2^{n-1}+1$ branches while 
$X_n$ has $2^{n-1}$ more branches which are not in $X_{n-1}$. Since 
\begin{equation*}X_n\subseteq X_{n+1}=F(X_n)=f_1(X_n)\cup f_2(X_n),\end{equation*} we observe that 
only the $2^{n-1}$ branches which belong to $X_n$ but not to $X_{n-1}$ yield new branches for $X_{n+1}$ and 
these are exactly $2^{n-1}+2^{n-1}=2\cdot 2^{n-1}=2^n$, hence $X_{n+1}$ has exactly $2^n+1+2^n=2^{n+1}+1$ 
branches.

It remains to prove that for each $n\in\NN_0$ the approximating set $X_{n+1}$ is a binary tree, for 
any $n\geq 1$. For 
this, we claim that, within the reasoning from the previous paragraphs, we have
\begin{equation}\label{e:xenab}
X_n=\bigcup_{w\in W_n}f_{w1}(X_{00})\cup \bigcup_{w\in W_n}f_{w1}(X_{01})\cup 
\bigcup_{w\in W_n}f_{w2}(X_{00}),
\end{equation}
and
\begin{equation}\label{e:xenaplus}
X_{n+1}=X_n \cup \bigcup_{w\in W_n}f_{w2}(X_{01}),
\end{equation}
where the latter term on the right hand side of \eqref{e:xenaplus} 
consists of branches of $X_{n+1}$ which do not belong to $X_n$ and which are mutually 
disjoint, modulo the starting point of one of them.
Indeed, for $n=1$ this follows from \eqref{e:femulte2}. For $n\geq 2$, 
this follows from the argument in the previous 
paragraph, with the observation that the set $\{f_{w2}(X_{01})\mid w\in W_n\}$ consists in mutually disjoint 
segments, each one representing a new branch of $X_{n+1}$ and which starts on a branch of $X_n$. This 
concludes the proof that $X_{n+1}$ is a binary tree.

We use \eqref{e:xenaplus} and get that, for any $k=1,\ldots,n$, modulo a finite number of points, the starting 
points of the new branches, we have
\begin{equation}\label{e:xeka}
X_k\setminus X_{k-1} = \bigcup_{w\in W_{k-1}} f_{w2}(X_{01})
\end{equation}
then, letting $k=1,\ldots,n,$ we get
\begin{align*}
X_n& =X_{0}\cup \bigcup_{k=0}^n (X_k\setminus X_{k-1}) =X_0\cup \bigcup_{k=0}^n \bigcup_{w\in W_{k-1}} 
f_{w2}(X_{01})\\
\intertext{and then, taking into account that $f_{w}$ is an affine function in either $z$ or $\bar z$, 
that $X_{01}$ is a segment,  and that the set $\{f_{w2}(X_{01})\mid w \in W_1\cup \cdots \cup W_n\}$ is made up 
by segments which have only the left end common with another segment, this 
equals}
& =[0,1]\cup c(0,1]\cup \bigcup_{k=0}^{n-1} \bigcup_{w\in W_k} (f_{w2}(0),f_{w2}(c)].\qedhere
\end{align*}
\end{proof}

Finally, we can tackle some variants in which we can organise the Hata tree-like self-similar set as a strictly 
inductive system of measure spaces. 

\begin{example}\label{ex:hata1}
First, the parametrisation \eqref{e:xenaz} is very helpful for this purpose 
since it provides a computable tool for the topologies and the $\sigma$-algebras on the sets $X_n$.
For each $m\in\NN_0$, let $\Omega_m$ be the Borel $\sigma$-algebra generated by the induced topology of 
the complex plane $\CC$ on $X_m$. Since each $X_m$ is a connected subset of $\CC$ which is a  tree having 
$2^m+1$ branches, we easily see that, for each $m\in\NN_0$, we 
have $\Omega_m=\{A\cap X_m\mid A\in\Omega_{m+1}\}\subseteq \Omega_{m+1}$, hence 
$(X_m,\Omega_m)_{m\in\NN_0}$ is a strictly inductive system of measurable spaces. Letting, for each 
$m\in\NN_0$, $\mu_m$ denote by Borel measure on $X_m$, we see that this is actually generated by the length 
of subsets made up as a finite union of intervals, and then $(X_m,\Omega_m,\mu_m)_{m\in\NN_0}$ becomes
a strictly inductive system of measure spaces.
\end{example}

Starting with the explicit description of the set $X_n$ as in Proposition~\ref{p:hata}, other variants of organising 
the Hata tree-like self-similar set as an iductive system of measure spaces can be obtained in such a way that the 
index set is not linear. This can be seen graphically in Figure~\ref{f:hata} but it can be made in at least two 
more variants.

\begin{example}\label{ex:hata2} Starting from the previous example, we consider the sequence 
$(X_n)_{n\geq 0}$, which is explicitly described in \eqref{e:xenaz}. We can define a different strictly inductive 
system of measure spaces which, in view of a spectral theory on the Hata tree-like self-similar set, has the 
advantage of containing information about each branch. We define
\begin{equation*}
\Lambda=\{(n,k)\mid n,k\in \NN_0,\ 0\leq k\leq 2^n\}, 
\end{equation*}
on which we define the following preorder: if $(n,k)$ and $(m,l)$ are in $\Lambda$ then $(n,k)\leq (m,l)$ if either
$n< m$ or $n=m$ and $k=0$.

In order to define the system of sets, for each $n\in\NN$ we consider the set $W_n$ of words of length $n$ and 
make an indexation of its elements $w_1^n,w_2^n,w_3^n,\ldots,w_{2^n}^n$. Then, in view of the parametrisation 
\eqref{e:xeka}, define the sets $X_{(n,k)}$ by
\begin{equation}
X_{(n,k)}:=\begin{cases}X_n\cup f_{w_k^n2}(X_{01}), & k\neq 0,\\
X_n, & k=0.
\end{cases}
\end{equation}
The topology on each $X_{(n,k)}$ is the usual topology induced from $\CC$ 
and the measure is the Borel measure, which is uniquely defined by the length of each segment component.
It is easy to see that $(X_{(n,k)})_{(n,k)\in\Lambda}$, with the underlying measure space structure, is a strictly 
inductive system of measure spaces. 

The first 
observation in the previous example is that the order is not linear. 
One of the advantages of this strictly inductive system of measure spaces is that it singles out each branch of the 
Hata tree-like self-similar set and hence, in view of a tentative spectral theory, it catches more accurately its 
topology.
\end{example}

\begin{remark} With notation as in the previous example,
let $(\epsilon_m)_{m\in \NN}$ be the 
sequence in $\Lambda$ defined by $\epsilon_m=(m,0)$. Then, observe that the following hold.
\begin{itemize}
\item[(c1)] For each $m\in\NN$ we have $\epsilon_m\leq \epsilon_{m+1}$.
\item[(c2)] For each $\lambda\in\Lambda$ there exists $m\in\NN$ such that $\lambda\leq \epsilon_m$.
\item[(c3)] For each $m\in\NN$ the set of $\lambda\in\Lambda$ such that $\lambda\leq \epsilon_m$ is finite.
\end{itemize}
\end{remark}

\begin{example}\label{ex:hata3} Let $\Lambda$ denote the set of all subsets $\lambda$ of the Hata tree-like set 
subject to the following conditions.
\begin{itemize}
\item[(x1)] $\lambda$ is connected.
\item[(x2)] $\lambda$ is a finite union of branches of $X_n$, for some $n\in\NN_0$.
\end{itemize}
With respect to order defined by set inclusion, clearly $\Lambda$ is a directed set. 

We consider the system $(X_\lambda)_{\lambda\in\Lambda}$ defined by 
$X_\lambda=\lambda$, for all $\lambda\in\Lambda$. The topology on each $X_\lambda$ is induced from the 
topology of $\CC$ and the measure on $X_\lambda$ is the Borel measure defined by the length function of 
segments on each of its 
branches. It is easy to see that $(X_\lambda)_{\lambda\in\Lambda}$ is a strictly inductive system of measure 
spaces. In addition, let the sequence $(X_n)_{n\in\NN_0}$ be the sequence defined recursively 
by $X_{n+1}:=F(X_n)$, $n\geq 0$, where $X_0=[0,1]\cup c[0,1]$, $F$ is defined at \eqref{e:fexa} and the 
functions $f_1$ and $f_2$ are defined at \eqref{e:fef}. Then, defining $\epsilon_m=X_m$, for all $m\in\NN$,
it easy to see that the strictly inductive system of measure spaces $(X_\lambda)_{\lambda\in\Lambda}$ 
satisfies the conditions (c1) through (c3) from Example~\ref{ex:hata2}.

This strictly inductive system of measure spaces is reacher than the one in Example~\ref{ex:hata2} because
it catches with a finer precision the structure of the Hata tree-like self-similar set. However, in order to use it for a 
spectral analysis, one should find a parametrisation of its elements, similar to what we get in \eqref{e:xenaz}.
\end{example}

\begin{remark} The parametrisation of a self-similar set is important in view of a spectral analysis on fractal sets. 
The approach used in \cite{Kigami} makes use of Dirichlet forms on networks that can be 
expressed as limits of networks of finite sets. But the whole picture is much more powerful since, 
to each Dirichlet form $(\cE,\dom(\cE))$ 
defined on $L^2(X,\mu)$, for a locally compact space $X$ endowed with a regular Borel measure $\mu$, there corresponds a 
strongly continuous semigroup on $L^2(X,\mu)$ with the Markov property, see Appendix B.3 in \cite{Kigami}. 
Although regularity of Borel measures on locally compact sets is not guaranteed, this is guaranteed on a 
compact metric space $X$, see e.g.\ Theorem~5.2.1 in H.~Geiss, S.~Geiss \cite{Geiss}. For the case of the 
Hata tree-like self-similar set, we should define a net of Dirichlet forms on each 
$L^2(X_\lambda,\mu_\lambda)$, since we have
a Borel space $(X_\lambda,\mu_\lambda)$ 
on the compact metric space $X_\lambda$, in which the Borel measure $\mu_\lambda$ is automatically regular, 
and then investigate the limit to the whole set $X$, that is, on the locally Hilbert space 
$\varinjlim_{\lambda\in\Lambda} L^2(X_\lambda,\mu_\lambda)$. This approach requires a further study of strongly continuous 
semigroups on locally Hilbert spaces and their correspondence with Dirichlet forms on these spaces.
\end{remark}

\section{Spectral Theorems}\label{s:st}

\subsection{Resolvent and Spectrum}\label{ss:ras}
With notation as in Subsection~\ref{ss:lcsab}, if $T\in\Lloc(\cH)$, its \emph{resolvent set} is
\begin{equation}\label{e:res} \rho(T)=\{z\in\mathbb{C}\mid zI-T\mbox{ is invertible in }\Lloc(\cH)\},
\end{equation}
and its \emph{spectrum} is
\begin{equation}\label{e:spec} \sigma(T)=\mathbb{C}\setminus\rho(T)=
\{z\in \mathbb{C}\mid zI-T\mbox{ is not invertible in }\Lloc(\cH)\}.
\end{equation}
For the proof of the next result see Lemma~2.1 in \cite{Gheondea3}.

\begin{lemma}\label{l:inv} Let $T\in\Lloc(\cH)$ and assume that there exists $S\colon \cH\ra\cH$ a 
linear operator such that $TS=ST=I_\cH$. Then $S\in\Lloc(\cH)$.
\end{lemma}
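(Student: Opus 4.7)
The plan is to invoke the block–matrix characterisation of local boundedness, namely Proposition~\ref{p:mat}, for the operator $S$. Fix an arbitrary $\lambda\in\Lambda$, and consider the decomposition $\cH=\cH_\lambda\oplus\cH_\lambda^\perp$ guaranteed by Lemma~\ref{l:perp}. Since $T\in\Lloc(\cH)$, Proposition~\ref{p:mat} tells us that $T$ has a diagonal block representation
\begin{equation*}
T=\begin{bmatrix} T_\lambda & 0 \\ 0 & T_{22}\end{bmatrix},
\end{equation*}
with $T_\lambda\in\cB(\cH_\lambda)$ and $T_{22}\in\Lloc(\cH_\lambda^\perp)$. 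Write the (a priori only linear) operator $S$ in the same decomposition as
\begin{equation*}
S=\begin{bmatrix} S_{11} & S_{12} \\ S_{21} & S_{22}\end{bmatrix}.
\end{equation*}

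Next, I would expand the identities $TS=I_\cH$ and $ST=I_\cH$ block-wise. The top-left corners yield
\begin{equation*}
T_\lambda S_{11}=I_{\cH_\lambda}=S_{11}T_\lambda,
\end{equation*}
so $T_\lambda$ is an algebraic bijection of $\cH_\lambda$ with algebraic inverse $S_{11}$; since $T_\lambda$ is a bounded bijection of the Hilbert space $\cH_\lambda$, the Open Mapping Theorem delivers that $S_{11}=T_\lambda^{-1}\in\cB(\cH_\lambda)$. The off-diagonal equations $T_\lambda S_{12}=0$ (from $TS=I$) and $S_{21}T_\lambda=0$ (from $ST=I$) combined with the injectivity and surjectivity of $T_\lambda$ force $S_{12}=0$ and $S_{21}=0$.

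Thus, for every $\lambda\in\Lambda$, the block-matrix representation of $S$ with respect to $\cH_\lambda\oplus\cH_\lambda^\perp$ is diagonal with bounded upper-left corner. Applying the ``if'' direction of Proposition~\ref{p:mat}, we conclude $S\in\Lloc(\cH)$, which is exactly the claim. I do not anticipate any serious obstacle; the whole argument is purely algebraic block manipulation plus one invocation of the Open Mapping Theorem, with Proposition~\ref{p:mat} doing all the locally-bounded bookkeeping. The only minor point worth flagging is that we never need the bottom-right relations $T_{22}S_{22}=S_{22}T_{22}=I_{\cH_\lambda^\perp}$ in the argument: the diagonal shape and the boundedness of the top-left entry already suffice by Proposition~\ref{p:mat}.
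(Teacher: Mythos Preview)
Your proof is correct. The paper itself does not supply a proof of this lemma, referring instead to Lemma~2.1 in \cite{Gheondea3}, so there is no in-paper argument to compare against; your use of Proposition~\ref{p:mat} is exactly the natural tool, and the block computations (in particular, deducing $S_{12}=0$ from injectivity of $T_\lambda$ and $S_{21}=0$ from its surjectivity) are correct.
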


For the proof of the next result see Proposition~2.2 in \cite{Gheondea3}. The first statement is actually a simple 
consequence of Lemma~\ref{l:inv}.

\begin{proposition}\label{p:spec} Let $T=\varprojlim_{\lambda\in\Lambda} T_\lambda$ be a 
locally bounded operator in $\Lloc(\cH)$. Then
\begin{equation}\label{e:rotez}
\rho(T)=\{z\in\mathbb{C}\mid zI-T\mbox{ is invertible in }\cL(\cH)\},
\end{equation} and
\begin{equation}\label{e:roteb} 
\rho(T)=\bigcap_{\lambda\in\Lambda}\rho(T_\lambda),\quad
\sigma(T)=\bigcup_{\lambda\in\Lambda} \sigma(T_\lambda).
\end{equation}
\end{proposition}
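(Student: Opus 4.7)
The plan is to reduce everything to assembling/disassembling block-diagonal inverses, using Proposition~\ref{p:lbo2}, Proposition~\ref{p:mat} and Lemma~\ref{l:inv}.

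First I would dispose of \eqref{e:rotez}. Clearly $\rho(T)\subseteq \{z\in\CC\mid zI-T\text{ invertible in }\cL(\cH)\}$. For the converse, if $zI-T$ admits a two-sided linear inverse $S\in\cL(\cH)$, Lemma~\ref{l:inv} immediately upgrades $S$ to an element of $\Lloc(\cH)$, so $z\in\rho(T)$.

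Next, for $\rho(T)\subseteq\bigcap_{\lambda\in\Lambda}\rho(T_\lambda)$, suppose $z\in\rho(T)$ and write $(zI-T)^{-1}=\varprojlim_{\lambda\in\Lambda} S_\lambda\in\Lloc(\cH)$. Fix $\lambda\in\Lambda$. Since $T\cH_\lambda\subseteq\cH_\lambda$ and $(zI-T)^{-1}\cH_\lambda\subseteq\cH_\lambda$ by coherence, restricting the identity $(zI-T)(zI-T)^{-1}=(zI-T)^{-1}(zI-T)=I_\cH$ to $\cH_\lambda$ yields $(zI_{\cH_\lambda}-T_\lambda)S_\lambda=S_\lambda(zI_{\cH_\lambda}-T_\lambda)=I_{\cH_\lambda}$, so $z\in\rho(T_\lambda)$ and $S_\lambda=(zI_{\cH_\lambda}-T_\lambda)^{-1}$.

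For the reverse inclusion $\bigcap_{\lambda\in\Lambda}\rho(T_\lambda)\subseteq\rho(T)$, which is the main point, suppose $z\in\rho(T_\lambda)$ for every $\lambda\in\Lambda$, and set $S_\lambda:=(zI_{\cH_\lambda}-T_\lambda)^{-1}\in\cB(\cH_\lambda)$. I would verify the hypotheses of Proposition~\ref{p:lbo2}(2) for $(S_\lambda)_{\lambda\in\Lambda}$: given $\lambda\leq\nu$, Proposition~\ref{p:lbo2}(2) applied to $T$ gives the block-diagonal form
\begin{equation*}
T_\nu=\left[\begin{matrix} T_\lambda & 0 \\ 0 & T_{\lambda,\nu}\end{matrix}\right]
\end{equation*}
with respect to $\cH_\nu=\cH_\lambda\oplus(\cH_\nu\ominus\cH_\lambda)$; consequently
\begin{equation*}
zI_{\cH_\nu}-T_\nu=\left[\begin{matrix} zI_{\cH_\lambda}-T_\lambda & 0 \\ 0 & zI_{\cH_\nu\ominus\cH_\lambda}-T_{\lambda,\nu}\end{matrix}\right].
\end{equation*}
Since this block-diagonal operator is invertible in $\cB(\cH_\nu)$, each diagonal block is invertible (the $(1,1)$ block is $(zI_{\cH_\lambda}-T_\lambda)$, which we know is invertible), and its inverse $S_\nu$ is also block diagonal. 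In particular the upper-left block of $S_\nu$ is $S_\lambda$, and $S_\nu$ commutes with $P_{\lambda,\nu}$. Proposition~\ref{p:lbo2}(2)$\Rightarrow$(3) then produces a unique $S\in\Lloc(\cH)$ with $S=\varprojlim_{\lambda\in\Lambda}S_\lambda$. Using \eqref{e:setep} one checks $(zI-T)S=\varprojlim_{\lambda\in\Lambda}(zI_{\cH_\lambda}-T_\lambda)S_\lambda=\varprojlim_{\lambda\in\Lambda}I_{\cH_\lambda}=I_\cH$, and symmetrically $S(zI-T)=I_\cH$, so $z\in\rho(T)$. The spectrum identity in \eqref{e:roteb} then follows by taking complements in $\CC$.

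The main obstacle is recognising that invertibility of the block-diagonal $zI_{\cH_\nu}-T_\nu$ forces both diagonal blocks to be invertible and its inverse to be block diagonal; everything else is then standard inductive/projective bookkeeping via Proposition~\ref{p:lbo2} and Lemma~\ref{l:inv}.
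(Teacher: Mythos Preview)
Your proof is correct and follows essentially the route the paper indicates: the paper does not reproduce a proof here but cites \cite{Gheondea3}, noting only that \eqref{e:rotez} is a simple consequence of Lemma~\ref{l:inv}, which is exactly how you handle it; your block-diagonal argument via Proposition~\ref{p:lbo2} for \eqref{e:roteb} is the natural one.
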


As a consequence of the previous proposition, we see that the spectrum of a locally 
bounded operator is always nonempty but it may be neither closed nor bounded.
On the other hand, if $\Lambda$ is countable, then the resolvent set of any operator $T\in\Lloc(\cH)$
is a $G_\delta$-set in $\CC$, while its spectrum is always an $F_\sigma$-subset of $\CC$.

\subsection{Locally Normal Operators}\label{ss:lno}
A linear operator $T\colon\cH\ra\cH$ is called
\emph{locally normal} if $T\in\Lloc(\cH)$ and $TT^*=T^*T$, respectively, \emph{locally selfadjoint} if
$T\in\Lloc(\cH)$ and $T=T^*$. In a similar way, $T\in\Lloc(\cH)$ is \emph{locally positive}, also 
denoted by $T\geq 0$, if 
$\langle Th,h\rangle \geq 0$. As usual, this opens the
possibility of introducing an order relation between locally selfadjoint operators: $A\geq B$ if 
$A-B\geq 0$.

Let $T\in\Lloc(\cH)$, $T=\varprojlim_{\lambda\in\Lambda}T_\lambda$.
Then $T$ is locally normal if and only if $T_\lambda$ is normal for all $\lambda\in\Lambda$. 
Similarly, $T$ is locally selfadjoint (locally positive) if and only if $T_\lambda$ 
is selfadjoint (positive) for all $\lambda\in\Lambda$. In particular, 
taking into account Proposition~\ref{p:spec}, it follows that, if $T$ is a locally normal operator
then it is locally selfadjoint (locally positive) if and only if 
$\sigma(T)\subseteq\RR$ ($\sigma(T)\subseteq\RR_+$).

\begin{example}\label{ex:lnop} With notation as in
Example~\ref{ex:locli}, given a function $\phi\in L^\infty_{\mathrm{loc}}(X,\mu)$ we can define the
operator $M_\phi\colon L^2_{\mathrm{loc}}(X,\mu)\ra L^2_{\mathrm{loc}}(X,\mu)$ of multiplication
by $\phi$, 
\begin{equation}\label{e:mop}
M_\phi f:=\phi f,\quad f\in L^2_{\mathrm{loc}}(X,\mu).
\end{equation}
Indeed,
we first observe that, by this definition, 
$M_\phi\in\Lloc(L^2_{\mathrm{loc}}(X,\mu))$. In order to see this, 
for each $\lambda\in\Lambda$ we consider the operator 
$M_{\phi|_{X_\lambda}}\colon L^2(X_\lambda,\mu_\lambda)\ra
L^2(X_\lambda,\mu_\lambda)$ of multiplication with 
$\phi|_{X_\lambda}\in L^\infty(X_\lambda,\mu_\lambda)$. Then 
$M_{\phi|_{X_\lambda}}$ is a bounded linear operator and it is
easy to see that the net $(M_{\phi|_{X_\lambda}})_{\lambda\in\Lambda}$ satisfies the properties
as in \eqref{e:temuha} hence, by Proposition~\ref{p:lbo2}, it follows that there exists a unique 
operator $M_\phi\colon L^2_{\mathrm{loc}}(X,\mu)\ra L^2_{\mathrm{loc}}(X,\mu)$ such that
\begin{equation}\label{e:teloc} 
M_\phi|_{L^2(X_\lambda,\mu_\lambda)}=J_\lambda M_{\phi|_{X_\lambda}},\quad \lambda\in\Lambda,
\end{equation}
where $J_\lambda$ denotes the canonical embedding of $L^2(X_\lambda,\mu_\lambda)$ into
$L^2_\mathrm{loc}(X,\mu)$. So, technically, we can write
\begin{equation}\label{e:mefivar}
M_\phi=\varprojlim_{\lambda\in\Lambda} M_{\phi|_{X_\lambda}}.
\end{equation}
Moreover, $M_\phi$ is a locally normal operator and
\begin{equation}\label{e:alno}
M_\phi^*=M_{\overline \phi}.
\end{equation}
In addition, 
\begin{equation}\label{e:samo}
\sigma(M_\phi)=\essran(\phi):=\bigcup_{\lambda\in\Lambda} \essran(\phi_\lambda),
\end{equation}
where $\essran(f)$ denotes the essential range of a function $f$ from $L^\infty$.
In particular, $M_\phi$ is locally selfadjoint (locally
positive) if and only if $\phi$ has real essential range (nonnegative essential range).

On the other hand, the map 
$L^\infty_{\mathrm{loc}}(X,\mu)\ni \phi \mapsto M_\phi\in\Lloc(L^2_{\mathrm{loc}}(X,\mu))$
is a coherent $*$-representation of the locally $C^*$-algebra $L^\infty_{\mathrm{loc}}(X,\mu)$. 
\end{example}

The following result is a counter-part of the celebrated Fuglede-Putnam Theorem for locally normal operators,
see Theorem~2.10 in \cite{Gheondea3}.

\begin{theorem}\label{t:fp}
Let $N\in\Lloc(\cH)$ and $M\in\Lloc(\cK)$ be locally normal operators, with the locally Hilbert spaces
$\cH$ and $\cK$ modelled on the same directed set $\Lambda$, and let $B\in\Lloc(\cK,\cH)$ be such
that $NB=BM$. Then $N^*B=BM^*$.
\end{theorem}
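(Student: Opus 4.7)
The plan is to reduce to the classical Fuglede--Putnam theorem componentwise and then reassemble via the projective limit structure. Write
\begin{equation*}
N=\varprojlim_{\lambda\in\Lambda}N_\lambda,\quad M=\varprojlim_{\lambda\in\Lambda}M_\lambda,\quad B=\varprojlim_{\lambda\in\Lambda}B_\lambda,
\end{equation*}
with $N_\lambda\in\cB(\cH_\lambda)$, $M_\lambda\in\cB(\cK_\lambda)$, and $B_\lambda\in\cB(\cK_\lambda,\cH_\lambda)$. Since $N$ and $M$ are locally normal, each $N_\lambda$ and each $M_\lambda$ is a normal operator on the respective Hilbert space.

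First, I would translate the hypothesis $NB=BM$ into the language of the underlying nets. By \eqref{e:setep}, one has $(NB)_\lambda=N_\lambda B_\lambda$ and $(BM)_\lambda=B_\lambda M_\lambda$ for every $\lambda\in\Lambda$; since the correspondence between a locally bounded operator and its defining net is one-to-one (see Proposition~\ref{p:lbo2}), the equality $NB=BM$ is equivalent to
\begin{equation*}
N_\lambda B_\lambda=B_\lambda M_\lambda,\quad \lambda\in\Lambda.
\end{equation*}

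Second, for each fixed $\lambda$, the classical Fuglede--Putnam theorem applies to the normal bounded operators $N_\lambda$ and $M_\lambda$ and to the bounded intertwiner $B_\lambda$, yielding
\begin{equation*}
N_\lambda^*B_\lambda=B_\lambda M_\lambda^*,\quad \lambda\in\Lambda.
\end{equation*}

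Finally, I would reassemble: by \eqref{e:temuhadj} we have $(N^*)_\lambda=N_\lambda^*$ and $(M^*)_\lambda=M_\lambda^*$, so applying \eqref{e:setep} again gives $(N^*B)_\lambda=N_\lambda^*B_\lambda$ and $(BM^*)_\lambda=B_\lambda M_\lambda^*$. These nets coincide for every $\lambda$, and, again by the uniqueness clause in Proposition~\ref{p:lbo2}, this forces the identity $N^*B=BM^*$ in $\Lloc(\cK,\cH)$.

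The only real content is the componentwise application of classical Fuglede--Putnam; the rest is bookkeeping within the projective limit formalism developed in Subsection~\ref{ss:lbo}. I do not anticipate a substantive obstacle, since all three operators are locally bounded, so there are no domain issues to navigate (we never need to touch the Hilbert completion $\widetilde\cH$ and its unbounded closures).
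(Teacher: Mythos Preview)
Your proof is correct and is precisely the natural argument: reduce to the classical Fuglede--Putnam theorem at each level $\lambda$ and reassemble via the projective limit. The paper itself does not prove this result in the present article but only cites it from \cite{Gheondea3}; your componentwise reduction is exactly the expected approach and matches the spirit of how all the other ``locally'' results here are obtained from their classical counterparts.
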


With respect to the definition of a locally normal operator we should observe that, if 
$(T_\lambda)_{\lambda\in\Lambda}$ is a net of normal operators, 
$T_\lambda\in\cB(\cH_\lambda)$ for all $\lambda$, then it is sufficient to impose the condition (lbo1), 
see Subsection~\ref{ss:lbo}, and then the condition (lbo2) follows as a consequence of the classical 
Fuglede-Putnam Theorem, in order for the net to define a locally normal operator $T=\varprojlim_{\lambda\in\Lambda}T_\lambda$.

\subsection{Locally Spectral Measures}\label{ss:lsm}
Let $\cH=\varinjlim_{\lambda\in\Lambda}\cH_\lambda$ be a locally Hilbert space.
A linear operator $E\colon\cH\ra\cH$ is called a \emph{local projection} if $E\in\Lloc(\cH)$ 
and $E^2=E=E^*$. Clearly, $E$ is a local projection if and only if $I-E$ is a local projection.
As a consequence of Proposition~\ref{p:lbo2} we have the following
characterisation of local projections.

\begin{lemma}\label{e:loproj} Given $E\in\Lloc(\cH)$, the following assertions are equivalent:
\begin{itemize}
\item[(i)] $E$ is a local projection.
\item[(ii)] $E=\varprojlim_{\lambda\in\Lambda}E_\lambda$ with $E_\lambda\in\cB(\cH_\lambda)$
projections (that is, $E_\lambda^2=E_\lambda^*=E_\lambda$) for all $\lambda\in\Lambda$ and,
for all $\lambda,\nu\in\Lambda$ with $\lambda\leq\nu$ we have
\begin{equation}\label{e:emuhal}
E_\nu|\cH_\lambda=J_{\nu,\lambda}E_\lambda\mbox{ and }E_\nu P_{\lambda,\nu}
=P_{\lambda,\nu}E_\nu,
\end{equation}
where
$P_{\lambda,\nu}=J_{\nu,\lambda}^*$ denotes the projection of $\cH_\nu$ onto $\cH_\lambda$.
\end{itemize}
\end{lemma}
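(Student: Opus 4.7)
The plan is to derive this as a direct corollary of Proposition~\ref{p:lbo2} combined with the compatibility of the projective representation with composition and the adjoint operation, see \eqref{e:setep} and \eqref{e:temuhadj}. The conditions \eqref{e:emuhal} in the statement are in fact precisely the conditions (1) in Proposition~\ref{p:lbo2}, so they encode nothing more than the fact that $E \in \Lloc(\cH)$. Consequently the real content to verify is that $E^2 = E$ and $E^* = E$ are equivalent, under the projective representation $E = \varprojlim_{\lambda\in\Lambda} E_\lambda$, to the pointwise conditions $E_\lambda^2 = E_\lambda$ and $E_\lambda^* = E_\lambda$ for every $\lambda\in\Lambda$.

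For the implication (ii) $\Rightarrow$ (i), I would start from a net $(E_\lambda)_{\lambda\in\Lambda}$ as in (ii). Proposition~\ref{p:lbo2} applied to the conditions \eqref{e:emuhal} yields a unique operator $E = \varprojlim_{\lambda\in\Lambda} E_\lambda \in \Lloc(\cH)$. Using \eqref{e:setep} we then compute
\begin{equation*}
E^2 = \varprojlim_{\lambda\in\Lambda} E_\lambda^2 = \varprojlim_{\lambda\in\Lambda} E_\lambda = E,
\end{equation*}
while using \eqref{e:temuhadj} we compute
\begin{equation*}
E^* = \varprojlim_{\lambda\in\Lambda} E_\lambda^* = \varprojlim_{\lambda\in\Lambda} E_\lambda = E,
\end{equation*}
hence $E$ is a local projection.

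For the converse (i) $\Rightarrow$ (ii), I would invoke Proposition~\ref{p:lbo2} to write any $E \in \Lloc(\cH)$ as $E = \varprojlim_{\lambda\in\Lambda} E_\lambda$ with $E_\lambda \in \cB(\cH_\lambda)$ satisfying \eqref{e:emuhal} (so half the statement of (ii) is free). Then, applying \eqref{e:setep} to $E^2 = E$ and \eqref{e:temuhadj} to $E^* = E$, we obtain $\varprojlim E_\lambda^2 = \varprojlim E_\lambda$ and $\varprojlim E_\lambda^* = \varprojlim E_\lambda$. The uniqueness of the projective representation, guaranteed by Proposition~\ref{p:lbo2}, then forces $E_\lambda^2 = E_\lambda$ and $E_\lambda^* = E_\lambda$ for every $\lambda \in \Lambda$, which is the remaining part of (ii).

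There is no real obstacle here: the argument is purely a bookkeeping exercise translating the projective limit identities between $E$ and its components, and the only nontrivial ingredient, the structural characterisation of $\Lloc(\cH,\cK)$ via coherent nets, has already been packaged in Proposition~\ref{p:lbo2}. If one wishes, one can also remark that the first condition in \eqref{e:emuhal} gives the diagonal block-matrix representation \eqref{e:temule} of $E_\nu$ with respect to the decomposition $\cH_\nu = \cH_\lambda \oplus (\cH_\nu \ominus \cH_\lambda)$, from which the equivalence between $E_\lambda$ being a projection and $E$ being a local projection becomes transparent at the level of each component.
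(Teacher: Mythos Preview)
Your proposal is correct and matches the paper's approach exactly: the paper does not give a separate proof but simply states the lemma as a consequence of Proposition~\ref{p:lbo2}, and your argument is precisely the unwinding of that consequence via \eqref{e:setep}, \eqref{e:temuhadj}, and the uniqueness of the projective representation.
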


Proposition~2.5 in \cite{Gheondea3} provides equivalent characterisations for the ranges of local projections.
Proposition~2.6 in \cite{Gheondea3} shows that the geometry of local projections is close to that of projections 
in Hilbert spaces.
Also, let us recall that for each $\lambda\in\Lambda$ a Hermitian projection $P_\lambda\in\cL(\cH)$ is 
uniquely defined such that $\ran(P_\lambda)=\cH_\lambda$, cf.\ Lemma~\ref{l:perp} and the 
definition thereafter. In Proposition~2.8  in \cite{Gheondea3} 
there are characterised the local projections within the larger class of Hermitian projections.

In the following we consider a 
strictly inductive system of measurable spaces 
$((X_\lambda,\Omega_\lambda))_{\lambda\in\Lambda}$, see Subsection~\ref{ss:sisms}.
We consider a
 \emph{projective system of spectral measures} $(E_\lambda)_{\lambda\in\Lambda}$
with respect to $(X_\lambda,\Omega_\lambda,\cH_\lambda)_{\lambda\in\Lambda}$, that is:
\begin{itemize}
\item[(psm1)] For each $\lambda\in\Lambda$, $E_\lambda$ is a spectral measure with respect to 
$(X_\lambda,\Omega_\lambda,\cH_\lambda)$.
\item[(psm2)] For any $\lambda,\nu\in\Lambda$ with $\lambda\leq\nu$ we have
\begin{equation*}
E_\nu(A)|\cH_\lambda =J_{\nu,\lambda}E_\lambda(A),\quad A\in\Omega_\lambda,
\end{equation*}
and
\begin{equation*}
E_\nu(A) P_{\lambda,\nu}=P_{\lambda,\nu}E_\nu(A),\quad A\in\Omega_\nu.
\end{equation*}
\end{itemize}

In the following we consider the inductive limit $(X,\Omega)$ of the strictly inductive system
of measurable spaces $((X_\lambda,\Omega_\lambda))_{\lambda\in\Lambda}$,
see \eqref{e:lms}, as well as its extension to a measurable space $(X,\widetilde\Omega)$ as in 
\eqref{e:tomega}. For the following result see Lemma~3.2 in \cite{Gheondea3}.

\begin{lemma}\label{l:ext} There exists a unique mapping $E\colon\widetilde\Omega\ra\Lloc(\cH)$ such that
\begin{equation}\label{e:exta}
E(A)|\cH_\lambda =J^\cH_\lambda E_\lambda(A\cap X_\lambda),\quad A\in\widetilde\Omega,\ 
\lambda\in\Lambda.
\end{equation}
In addition, the mapping $E$ has the following properties.
\begin{itemize}
\item[(i)] $E(A)=E(A)^*$ for all 
$A\in\widetilde\Omega$.
\item[(ii)] $E(\emptyset)=0$ and $E(X)=I$.
\item[(iii)] $E(A_1\cap A_2)=E(A_1)E(A_2)$ for all $A_1,A_2\in\widetilde\Omega$.
\item[(iv)] For any sequence $(A_n)_n$ of mutually disjoint sets in $\widetilde\Omega$ we have
\begin{equation*}
E(\bigcup_{n=1}^\infty A_n)=\sum_{n=1}^\infty E(A_n).
\end{equation*}
\end{itemize}
\end{lemma}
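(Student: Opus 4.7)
The plan is to fix $A\in\widetilde\Omega$ and construct $E(A)$ by gluing the net of bounded operators $T^A_\lambda:=E_\lambda(A\cap X_\lambda)\in\cB(\cH_\lambda)$, $\lambda\in\Lambda$, into a locally bounded operator via Proposition~\ref{p:lbo2}. Note that $T^A_\lambda$ is well defined, since $A\cap X_\lambda\in\Omega_\lambda$ by the very definition \eqref{e:tomega} of $\widetilde\Omega$. Once the two compatibility conditions \eqref{e:temuha} are verified for $(T^A_\lambda)_{\lambda\in\Lambda}$, Proposition~\ref{p:lbo2} will supply a unique $E(A)=\varprojlim_{\lambda\in\Lambda}T^A_\lambda\in\Lloc(\cH)$ automatically satisfying \eqref{e:exta}, and uniqueness of the whole map $E$ follows at once from \eqref{e:exta}.

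I expect the main obstacle to be the first compatibility condition, $T^A_\nu|_{\cH_\lambda}=J_{\nu,\lambda}^\cH T^A_\lambda$ for $\lambda\leq\nu$; the second one, $T^A_\nu P_{\lambda,\nu}=P_{\lambda,\nu}T^A_\nu$, is immediate from (psm2) applied to $A\cap X_\nu\in\Omega_\nu$. To handle the first, I would split $A\cap X_\nu$ into the disjoint union $(A\cap X_\lambda)\sqcup\bigl((A\cap X_\nu)\setminus X_\lambda\bigr)$ and use additivity of $E_\nu$ to write $T^A_\nu=E_\nu(A\cap X_\lambda)+E_\nu\bigl((A\cap X_\nu)\setminus X_\lambda\bigr)$. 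The restriction of the first summand to $\cH_\lambda$ equals $J_{\nu,\lambda}^\cH E_\lambda(A\cap X_\lambda)=J_{\nu,\lambda}^\cH T^A_\lambda$ directly from (psm2). For the second summand, I would observe two facts: on the one hand, (psm2) applied with $A=X_\lambda\in\Omega_\lambda$ yields $E_\nu(X_\lambda)|_{\cH_\lambda}=J_{\nu,\lambda}^\cH E_\lambda(X_\lambda)=J_{\nu,\lambda}^\cH$, so $\cH_\lambda\subseteq\ran E_\nu(X_\lambda)$; on the other hand, the disjointness $X_\lambda\cap\bigl((A\cap X_\nu)\setminus X_\lambda\bigr)=\emptyset$ together with multiplicativity of the spectral measure $E_\nu$ gives $E_\nu(X_\lambda)\,E_\nu\bigl((A\cap X_\nu)\setminus X_\lambda\bigr)=0$. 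Since $E_\nu\bigl((A\cap X_\nu)\setminus X_\lambda\bigr)$ commutes with $P_{\lambda,\nu}$ by (psm2), it leaves $\cH_\lambda$ invariant; combined with the previous two facts, its restriction to $\cH_\lambda$ lands in $\ran E_\nu(X_\lambda)\cap\ker E_\nu(X_\lambda)=\{0\}$, which is what is needed.

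For the four properties (i)--(iv), I would transfer the corresponding properties of each spectral measure $E_\lambda$ through the projective limit, using the formula \eqref{e:temuhadj} for the adjoint and \eqref{e:setep} for the composition, together with the uniqueness clause of Proposition~\ref{p:lbo2}. Specifically, (i) reduces to $E_\lambda(A\cap X_\lambda)^*=E_\lambda(A\cap X_\lambda)$, and (ii) to $E_\lambda(\emptyset)=0$ and $E_\lambda(X\cap X_\lambda)=E_\lambda(X_\lambda)=I_{\cH_\lambda}$. For (iii) one computes for each $\lambda$ that $E_\lambda\bigl((A_1\cap A_2)\cap X_\lambda\bigr)=E_\lambda(A_1\cap X_\lambda)E_\lambda(A_2\cap X_\lambda)$, and then invokes \eqref{e:setep} and the uniqueness in Proposition~\ref{p:lbo2}.

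Finally, for the countable additivity (iv), I would argue pointwise: given $h\in\cH$, choose $\lambda\in\Lambda$ with $h\in\cH_\lambda$, and then
\begin{equation*}
E\Bigl(\bigcup_{n=1}^\infty A_n\Bigr)h=E_\lambda\Bigl(\Bigl(\bigcup_{n=1}^\infty A_n\Bigr)\cap X_\lambda\Bigr)h=\sum_{n=1}^\infty E_\lambda(A_n\cap X_\lambda)h=\sum_{n=1}^\infty E(A_n)h,
\end{equation*}
where the first and last equalities are instances of \eqref{e:exta}, and the middle one combines the set-theoretic distributivity $\bigl(\bigcup_n A_n\bigr)\cap X_\lambda=\bigcup_n(A_n\cap X_\lambda)$ with the strong $\sigma$-additivity of the Hilbert space spectral measure $E_\lambda$ on $\cH_\lambda$. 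The only subtle step in the whole argument is thus the verification in the second paragraph; the remaining parts are essentially componentwise restatements of standard properties of spectral measures.
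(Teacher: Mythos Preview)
Your proof is correct and follows essentially the same approach as the paper's outline: defining $E_\lambda(A):=E_\lambda(A\cap X_\lambda)$ for $A\in\widetilde\Omega$, verifying the compatibility conditions \eqref{e:temuha} via the assumption (psm2), and then invoking Proposition~\ref{p:lbo2}. You have supplied more detail than the paper (which defers the full proof to \cite{Gheondea3}), in particular the careful verification that $E_\nu\bigl((A\cap X_\nu)\setminus X_\lambda\bigr)$ vanishes on $\cH_\lambda$; note that this step can be shortened slightly by observing directly that $h\in\cH_\lambda$ implies $h=E_\nu(X_\lambda)h$, whence $E_\nu\bigl((A\cap X_\nu)\setminus X_\lambda\bigr)h=E_\nu\bigl((A\cap X_\nu)\setminus X_\lambda\bigr)E_\nu(X_\lambda)h=0$.
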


Let us recall that 
the mapping $E\colon\widetilde\Omega\ra\Lloc(\cH)$ as in Lemma~\ref{l:ext} is defined as follows:
for each $\lambda\in\Lambda$ we define
\begin{equation}\label{e:ext}
E_\lambda(A):=E_\lambda(A\cap X_\lambda),\quad A\in\widetilde\Omega,
\end{equation}
which makes perfectly sense since $A\cap X_\lambda\in\Omega_\lambda$. We observe that, for each
$A\in\widetilde\Omega$, the net $(E_\lambda(A))_{\lambda\in\Lambda}$ satisfies 
the conditions \eqref{e:temuha}, which follows by the assumption (psm2). Then, by Proposition~\ref{p:lbo2},
it follows that there exists $E(A)\in\Lloc(\cH)$ such that \eqref{e:exta} holds.

We call the mapping $E\colon\widetilde\Omega\ra\Lloc(\cH)$ obtained in Lemma~\ref{l:ext} a
\emph{locally spectral measure}.
On the other hand, when restricted to the locally $\sigma$-ring 
$\Omega\subseteq\widetilde\Omega$, the mapping $E|\Omega$ has the following properties.
\begin{itemize}
\item[(i)] $E(A)=E(A)^*$ for all 
$A\in\Omega$.
\item[(ii)] $E(\emptyset)=0$.
\item[(iii)] $E(A_1\cap A_2)=E(A_1)E(A_2)$ for all $A_1,A_2\in\Omega$.
\item[(iv)] For any sequence $(A_n)_n$ of mutually disjoint sets in $\Omega$ such that $\bigcup_{n=1}^\infty A_n\in\Omega$, we have
\begin{equation*}
E(\bigcup_{n=1}^\infty A_n)=\sum_{n=1}^\infty E(A_n).
\end{equation*}
\end{itemize}
We call $(E,X,\Omega,\cH)$
the \emph{projective limit} of the projective system of spectral measures $((E_\lambda,X_\lambda,
\Omega_\lambda,\cH_\lambda))_{\lambda\in\Lambda}$ and use the notation
\begin{equation}\label{e:plsm}
(E,X,\Omega,\cH)=\varprojlim_{\lambda\in\Lambda} (E_\lambda,X_\lambda,
\Omega_\lambda,\cH_\lambda).
\end{equation}

A locally spectral measure $E$, as we defined it, cannot be considered as a spectral measure, due to the fact
that its codomain is $\Lloc(\cH)$. However, see Proposition~3.3 in \cite{Gheondea3}, 
it can be lifted to a spectral measure. 
More precisely, taking into account that
any local projection is actually a bounded operator on the pre-Hilbert space $\cH$, the range of
$E$ is actually contained in its bounded part $b\Lloc(\cH)$ which is canonically embedded in 
$\cB(\widetilde\cH)$. Therefore, for each $A\in\widetilde\Omega$, since 
$E(A)\leq I$, it follows that $E(A)$ is bounded hence it has a unique extension to an operator 
$\widetilde E(A)\in\cB(\widetilde\cH)$.

In the following we recall how we can produce locally normal operators by integration of locally bounded
$\widetilde\Omega$-measurable functions with respect to a locally spectral measure. First, one has to 
clarify the meaning of $\widetilde\Omega$-measurability. For the proof of the following result see Lemma~3.4
in \cite{Gheondea3}.

\begin{lemma}\label{l:lmf} Let $\phi\colon X\ra \CC$. The following assertions are equivalent:
\begin{itemize}
\item[(i)] $\phi$ is $\widetilde\Omega$-measurable.
\item[(ii)] For each $\lambda\in\Lambda$ the function $\phi|X_\lambda$ is $\Omega_\lambda$-measurable.
\end{itemize}
\end{lemma}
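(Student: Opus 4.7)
My plan is to prove this by directly unwinding the definition of $\widetilde\Omega$ given in \eqref{e:tomega}. Recall that $\widetilde\Omega = \{A \subseteq X \mid A \cap X_\lambda \in \Omega_\lambda \text{ for all } \lambda \in \Lambda\}$, so membership in $\widetilde\Omega$ is tested by trace on each $X_\lambda$. Since measurability of a complex-valued function is characterised by measurability of preimages of Borel sets in $\CC$, and preimage distributes over intersection, the two measurability conditions should become tautologically equivalent.

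For the implication (i) $\Rightarrow$ (ii), I would fix $\lambda \in \Lambda$ and an arbitrary Borel set $B \subseteq \CC$. From (i), $\phi^{-1}(B) \in \widetilde\Omega$, hence by the definition of $\widetilde\Omega$ we have $\phi^{-1}(B) \cap X_\lambda \in \Omega_\lambda$. Since
\begin{equation*}
(\phi|X_\lambda)^{-1}(B) = \{x \in X_\lambda \mid \phi(x) \in B\} = \phi^{-1}(B) \cap X_\lambda,
\end{equation*}
this is exactly the statement that $\phi|X_\lambda$ is $\Omega_\lambda$-measurable.

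Conversely, for (ii) $\Rightarrow$ (i), let $B \subseteq \CC$ be any Borel set. The hypothesis gives that, for every $\lambda \in \Lambda$,
\begin{equation*}
\phi^{-1}(B) \cap X_\lambda = (\phi|X_\lambda)^{-1}(B) \in \Omega_\lambda.
\end{equation*}
By the very definition of $\widetilde\Omega$ in \eqref{e:tomega} this means $\phi^{-1}(B) \in \widetilde\Omega$, and since $B$ was arbitrary, $\phi$ is $\widetilde\Omega$-measurable. The proof is essentially a definition chase, and I do not anticipate any genuine obstacle; the only care needed is the commutation $(\phi|X_\lambda)^{-1}(B) = \phi^{-1}(B) \cap X_\lambda$, which is immediate from set-theoretic manipulation.
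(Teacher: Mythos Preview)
Your proof is correct; it is exactly the natural definition chase using \eqref{e:tomega} and the identity $(\phi|X_\lambda)^{-1}(B)=\phi^{-1}(B)\cap X_\lambda$. The paper itself does not give a proof but refers to Lemma~3.4 in \cite{Gheondea3}, so there is nothing further to compare against.
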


In the following let us denote
\begin{equation}\label{e:bel}
B_{\mathrm{loc}}(X,\widetilde\Omega):=\{\phi\colon X\ra\CC\mid 
\phi|X_\lambda\mbox{ is bounded and }\Omega_\lambda\mbox{-measurable for all }\lambda\in\Lambda\}.
\end{equation}
It is easy to see that $B_{\mathrm{loc}}(X,\widetilde\Omega)$ is a $*$-algebra of complex functions, with
usual algebraic operations and involution. Letting, for each $\lambda\in\Lambda$,
\begin{equation}\label{e:bes}
p_\lambda(\phi)=\sup_{x\in X_\lambda}|\phi(x)|\quad \phi\in B_{\mathrm{loc}}(X,\widetilde\Omega),
\end{equation}
we obtain a family of $C^*$-seminorms $\{p_\lambda\}_{\lambda\in\Lambda}$
with respect to which 
$B_{\mathrm{loc}}(X,\widetilde\Omega)$ becomes a locally $C^*$-algebra.
More precisely, letting 
$B(X_\lambda,\Omega_\lambda)$ denote the $C^*$-algebra of all bounded
and $\Omega_\lambda$-measurable functions $f\colon X_\lambda\ra\CC$, we have
\begin{equation}\label{e:belal}
B_{\mathrm{loc}}(X,\widetilde\Omega)=\varprojlim_{\lambda\in\Lambda}B(X_\lambda,\Omega_\lambda).
\end{equation}

Let now $\phi\in B_{\mathrm{loc}}(X,\widetilde\Omega)$ be fixed. By Proposition~9.4 in \cite{Conway}, 
for each $\lambda\in\Lambda$ there exists a unique normal operator $N_\lambda\in\cB(\cH_\lambda)$
such that
\begin{equation}\label{e:nel}
N_\lambda(\phi) =\int_{X_\lambda} \phi(x)\de E_\lambda(x),
\end{equation}
in the following sense: for each $\epsilon>0$ and $\{A_1,\ldots,A_n\}$ an 
$\Omega_\lambda$-partition of $X_\lambda$ such that, $\sup\{|\phi(x)-\phi(y)|<\epsilon\mid x,y\in A_k\}$
for all $k=1,\ldots,n$, and for any choice of points $x_k\in A_k$, for all $k=1,\ldots,n$, we have
\begin{equation}\label{e:nelas}
\| N_\lambda(\phi) -\sum_{k=1}^n \phi(x)E_\lambda(A_k)\|<\epsilon,
\end{equation}
where $\|\cdot\|$ denotes the operator norm in $\cB(\cH_\lambda)$. For the proof of the following result
see Proposition~3.5 in \cite{Gheondea3}.

\begin{proposition}\label{p:nes}
For each $\phi\in B_{\mathrm{loc}}(X,\widetilde\Omega)$ there exists a unique locally normal operator $N(\phi)
\in\Lloc(\cH)$ such that $N(\phi)=\varprojlim_{\lambda\in\Lambda}N_\lambda(\phi)$. In addition, 
the mapping $B_{\mathrm{loc}}(X,\widetilde\Omega)\ni \phi\mapsto N(\phi)\in \Lloc(\cH)$ is a coherent 
$*$-representation. 
\end{proposition}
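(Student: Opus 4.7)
The plan is to verify the hypotheses of Proposition~\ref{p:lbo2} for the net $(N_\lambda(\phi))_{\lambda\in\Lambda}$, so that it assembles into a single locally normal operator $N(\phi)\in\Lloc(\cH)$, and then to transfer the classical functional calculus identities from each fibre to the projective limit. First I would fix $\phi\in B_{\mathrm{loc}}(X,\widetilde\Omega)$ and recall that, by \eqref{e:nel}--\eqref{e:nelas}, each $N_\lambda(\phi)\in\cB(\cH_\lambda)$ is the operator norm limit of Riemann sums $\sum_k \phi(x_k^{(\lambda)})E_\lambda(A_k^{(\lambda)})$ over $\Omega_\lambda$-partitions of $X_\lambda$ whose oscillation of $\phi$ tends to zero. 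Since each $E_\lambda(A_k^{(\lambda)})$ is normal (in fact a projection) and the $E_\lambda(A_k^{(\lambda)})$ commute, the partial sums are normal, and operator norm convergence transfers normality to $N_\lambda(\phi)$.

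Next I would check the two coherence conditions in \eqref{e:temuha} for the net $(N_\lambda(\phi))_{\lambda\in\Lambda}$. Fix $\lambda\leq\nu$. Given a partition $\{A_1,\ldots,A_n\}$ of $X_\lambda$ adapted to $\phi|_{X_\lambda}$ within tolerance $\epsilon$, choose also a partition $\{B_1,\ldots,B_m\}$ of $X_\nu\setminus X_\lambda$ adapted to $\phi|_{X_\nu\setminus X_\lambda}$ within tolerance $\epsilon$; their union is a partition of $X_\nu$ adapted to $\phi|_{X_\nu}$. By (psm2) applied to each $A_k$, one has $E_\nu(A_k)|_{\cH_\lambda}=J_{\nu,\lambda}E_\lambda(A_k)$, while $E_\nu(B_j)h=0$ for $h\in\cH_\lambda=\ran E_\nu(X_\lambda)$. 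Adding the corresponding Riemann sums and passing to the limit yields $N_\nu(\phi)|_{\cH_\lambda}=J_{\nu,\lambda}N_\lambda(\phi)$, which is the first half of \eqref{e:temuha}. For the commutation with $P_{\lambda,\nu}$, the relation $E_\nu(A)P_{\lambda,\nu}=P_{\lambda,\nu}E_\nu(A)$ from (psm2) propagates through the Riemann sums and, by operator norm convergence, to $N_\nu(\phi)P_{\lambda,\nu}=P_{\lambda,\nu}N_\nu(\phi)$. Proposition~\ref{p:lbo2} then produces the unique $N(\phi)\in\Lloc(\cH)$ with $N(\phi)=\varprojlim_{\lambda\in\Lambda}N_\lambda(\phi)$, and locally normal follows since each $N_\lambda(\phi)$ is normal, using the characterisation in Subsection~\ref{ss:lno}.

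It remains to verify that $\phi\mapsto N(\phi)$ is a coherent $*$-representation. Linearity, multiplicativity and the $*$-property at the level of $N$ follow fibrewise: by classical functional calculus for the spectral measure $E_\lambda$ (Proposition~9.4 in \cite{Conway}), $\phi\mapsto N_\lambda(\phi)$ is a $*$-homomorphism $B(X_\lambda,\Omega_\lambda)\to\cB(\cH_\lambda)$, and by \eqref{e:temuhadj} and \eqref{e:setep} these fibrewise identities assemble into the corresponding identities for $N$ in $\Lloc(\cH)$. Coherence, in the sense of Subsection~\ref{ss:cmpl}, then amounts to the commutativity, for all $\lambda\leq\nu$, of the squares relating the restriction maps $B(X_\nu,\Omega_\nu)\to B(X_\lambda,\Omega_\lambda)$, $\psi\mapsto \psi|_{X_\lambda}$, with the projections $\pi_{\lambda,\nu}\colon\Lloc(\cH_\nu)\to\Lloc(\cH_\lambda)$ defined by compression, see \eqref{e:plmh}. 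Concretely one needs $\pi_{\lambda,\nu}(N_\nu(\psi))=N_\lambda(\psi|_{X_\lambda})$ for $\psi\in B(X_\nu,\Omega_\nu)$, and this is exactly the first identity in \eqref{e:temuha} read in the form $J_{\nu,\lambda}^* N_\nu(\psi)J_{\nu,\lambda}=N_\lambda(\psi|_{X_\lambda})$, which we have already established.

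The main technical obstacle is the control of the two approximation processes simultaneously on $X_\lambda$ and on $X_\nu$; the key trick is the choice of a joint partition that splits along $X_\lambda$ so that the coherence (psm2) of the projective system of spectral measures can be applied term-by-term before passing to the operator norm limit. Once this compatibility is in hand, the rest of the argument is a routine transfer of classical spectral calculus identities through the projective limit structure \eqref{e:loclhs} of $\Lloc(\cH)$ and \eqref{e:belal} of $B_{\mathrm{loc}}(X,\widetilde\Omega)$.
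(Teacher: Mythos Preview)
Your argument is correct and follows the natural route: verify the coherence conditions \eqref{e:temuha} for the net $(N_\lambda(\phi))_\lambda$ via Riemann sums and (psm2), assemble via Proposition~\ref{p:lbo2}, then lift the fibrewise $*$-homomorphism identities. The paper itself does not give a proof here but defers to Proposition~3.5 in \cite{Gheondea3}; your approach is exactly what one expects that proof to contain, and in any case is self-contained and sound.

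One small quibble: the parenthetical identification $\cH_\lambda=\ran E_\nu(X_\lambda)$ is not something you have established (and need not hold in general without further hypotheses). Fortunately your argument only uses the inclusion $\cH_\lambda\subseteq\ran E_\nu(X_\lambda)$, which does follow from (psm2) since $E_\nu(X_\lambda)|_{\cH_\lambda}=J_{\nu,\lambda}E_\lambda(X_\lambda)=J_{\nu,\lambda}$, and then $E_\nu(B_j)h=E_\nu(B_j)E_\nu(X_\lambda)h=E_\nu(B_j\cap X_\lambda)h=0$ for $h\in\cH_\lambda$. I would simply drop the equality and keep the chain of equalities just written.
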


In view of Proposition~\ref{p:nes}, for any $\phi\in B_{\mathrm{loc}}(X,\widetilde\Omega)$ we denote
\begin{equation}\label{e:isin}
\int_{\sigma(N)} \phi(x)\de E(x):= N(\phi)=\varprojlim_{\lambda\in\Lambda} N_\lambda(\phi)
=\varprojlim_{\lambda\in\Lambda} \int_{\sigma(N_\lambda)} \phi(x)\de E_\lambda (x).
\end{equation}

\subsection{The First Spectral Theorem}\label{ss:fst}
Let $N\in\Lloc(\cH)$ be a locally normal operator, for some locally Hilbert space 
$\cH=\varinjlim_{\lambda\in\Lambda}\cH_\lambda$. By definition, see Subsection~\ref{ss:pllcs}, there exists
uniquely the net $(N_\lambda)_{\lambda\in\Lambda}$ with $N=\varprojlim_{\lambda\in\Lambda}N_\lambda$,
in the sense of the conditions (lbo1) and (lbo2). By Proposition~\ref{p:spec} we have 
$\sigma(N)=\bigcup_{\lambda\in\Lambda}\sigma(N_\lambda)$. For each $\lambda\in\Lambda$ the spectrum
$\sigma(N_\lambda)$ is a compact nonempty subset of $\CC$ and $(\sigma(N_\lambda),\cB_\lambda)$ is
a measurable space, where $\cB_\lambda$ denotes the $\sigma$-algebra of all Borel subsets of 
$\sigma(N_\lambda)$. Then, $((\sigma(N_\lambda),\cB_\lambda))_{\lambda\in\Lambda}$ 
is a strictly inductive system of 
measurable spaces, as in Subsection~\ref{ss:sisms}, and letting $(\sigma(N),\cB)$ be its inductive limit in the 
sense of \eqref{e:lms}, it is easy to see that, with respect to \eqref{e:tomega}, $\widetilde\cB$ is
the $\sigma$-algebra of all Borel subsets of $\sigma(N)$.

Further on, by the Spectral Theorem for normal operators in Hilbert spaces, e.g.\ see Theorem~10.2 
in \cite{Conway}, for each $\lambda\in\Lambda$, let $E_\lambda$ denote the spectral measure of 
$N_\lambda$ with respect to the triple $(\sigma(N_\lambda),\cB_\lambda,\cH_\lambda)$. In particular,
\begin{equation}\label{e:nexus}
N_\lambda=\int_{\sigma(N_\lambda)} z\de E_\lambda(z),\quad I_\lambda=\int_{\sigma(N_\lambda)} 
\de E_\lambda(z),
\end{equation}
where $I_\lambda$ denotes the identity operator on $\cH_\lambda$. By Lemma~3.6 in \cite{Gheondea3},
$((E_\lambda,\sigma(N_\lambda),\cB_\lambda,\cH_\lambda))_{\lambda\in\Lambda}$ is a projective system
of spectral measures. For the proof of the following theorem, that we call The First Spectral Theorem for locally 
normal operators, see Theorem~3.7 in \cite{Gheondea3}.

\begin{theorem}\label{t:st1}
For any locally normal operator $N\in\Lloc(\cH)$ there exists a unique locally spectral measure $E$, with
respect to the Borel measurable space $(\sigma(N),\widetilde\cB)$ 
and the locally Hilbert space $\cH$, with the following properties.
\begin{itemize}
\item[(i)] $N=\int_{\sigma(N)} z\de E(z)$ and $I=\int_{\sigma(N)} \de E(z)$, in the sense of \eqref{e:isin}.
\item[(ii)] For any nonempty relatively open subset $G$ of $\sigma(N)$ we have $E(G)\neq 0$.
\item[(iii)] If $T\in\Lloc(\cH)$ then $TN=NT$ if and only if $TE(A)=E(A)T$ for all Borel subset of $\sigma(A)$.
\end{itemize}
\end{theorem}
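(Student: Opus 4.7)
The plan is to build $E$ by gluing together the classical spectral measures $(E_\lambda)_{\lambda\in\Lambda}$ obtained from the normal operators $N_\lambda\in\cB(\cH_\lambda)$, and then verify that the resulting object has all three required properties.

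First, since $N=\varprojlim_{\lambda\in\Lambda}N_\lambda$ is locally normal, each $N_\lambda$ is normal on $\cH_\lambda$, so the classical spectral theorem (e.g.\ Theorem~10.2 in \cite{Conway}) supplies the spectral measure $E_\lambda$ on $(\sigma(N_\lambda),\cB_\lambda)$ satisfying \eqref{e:nexus}. The key technical step is checking that $((E_\lambda,\sigma(N_\lambda),\cB_\lambda,\cH_\lambda))_{\lambda\in\Lambda}$ forms a projective system of spectral measures in the sense of (psm1)--(psm2). For the compatibility (psm2), given $\lambda\leq\nu$, I would observe that for every Borel subset $A\subseteq\sigma(N_\lambda)$, both $E_\nu(A)|\cH_\lambda$ and $J_{\nu,\lambda}E_\lambda(A)$ are spectral measures for $N_\nu|\cH_\lambda=J_{\nu,\lambda}N_\lambda$, which is the normal operator $N_\lambda$ viewed inside $\cH_\nu$; uniqueness of the spectral measure then forces equality. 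The commutation $E_\nu(A)P_{\lambda,\nu}=P_{\lambda,\nu}E_\nu(A)$ for $A\in\cB_\nu$ follows from the block-matrix decomposition of $N_\nu$ supplied by Proposition~\ref{p:mat}, which forces any spectral projection of $N_\nu$ to be block-diagonal with respect to $\cH_\nu=\cH_\lambda\oplus(\cH_\nu\ominus\cH_\lambda)$.

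Once the projective system is in hand, I apply Lemma~\ref{l:ext} to produce a unique locally spectral measure $E\colon\widetilde\cB\ra\Lloc(\cH)$ such that $E(A)|\cH_\lambda=J_\lambda^\cH E_\lambda(A\cap\sigma(N_\lambda))$. Property~(i) is then an immediate consequence of Proposition~\ref{p:nes}: applying $\int\,\de E$ to the coordinate function $\phi(z)=z$ (which lies in $B_{\mathrm{loc}}(\sigma(N),\widetilde\cB)$ since each $\sigma(N_\lambda)$ is compact) and to the constant $1$ yields, by construction \eqref{e:isin}, the projective limits $\varprojlim_\lambda N_\lambda=N$ and $\varprojlim_\lambda I_\lambda=I$. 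For property~(ii), given a nonempty relatively open $G\subseteq\sigma(N)$, pick $z_0\in G$ and choose $\lambda\in\Lambda$ with $z_0\in\sigma(N_\lambda)$, using \eqref{e:roteb}. Then $G\cap\sigma(N_\lambda)$ is nonempty and relatively open in $\sigma(N_\lambda)$, so the classical result gives $E_\lambda(G\cap\sigma(N_\lambda))\neq 0$; since $E(G)|\cH_\lambda=J_\lambda^\cH E_\lambda(G\cap\sigma(N_\lambda))$, we get $E(G)\neq 0$.

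Property~(iii) is the step I expect to require the most care. For the reverse implication, if $T$ commutes with every $E(A)$ then $T$ commutes with every locally bounded operator obtained as an integral against $E$, hence $TN=NT$ by (i). For the forward implication, suppose $T=\varprojlim_\lambda T_\lambda\in\Lloc(\cH)$ satisfies $TN=NT$. Reading off components using Proposition~\ref{p:lbo2}, one obtains $T_\lambda N_\lambda=N_\lambda T_\lambda$ on each $\cH_\lambda$; here the Fuglede--Putnam theorem applied in $\cB(\cH_\lambda)$ (alternatively Theorem~\ref{t:fp}) then gives $T_\lambda N_\lambda^*=N_\lambda^*T_\lambda$, so $T_\lambda$ lies in the commutant of the von Neumann algebra generated by $N_\lambda$, which contains all spectral projections $E_\lambda(A\cap\sigma(N_\lambda))$. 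Assembling these commutation relations via Proposition~\ref{p:lbo2} yields $TE(A)=E(A)T$ for every $A\in\widetilde\cB$. Finally, uniqueness of $E$ follows because any locally spectral measure $E'$ satisfying (i) must restrict on each $\cH_\lambda$ to a spectral measure for $N_\lambda$, which by classical uniqueness coincides with $E_\lambda$; then $E'=E$ by Lemma~\ref{l:ext}.
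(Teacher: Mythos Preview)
Your proposal is correct and follows essentially the same approach that the paper outlines before stating the theorem (and that is carried out in detail in the cited reference \cite{Gheondea3}): assemble the classical spectral measures $E_\lambda$ of the $N_\lambda$ into a projective system via the block-diagonal structure from Proposition~\ref{p:lbo2}, invoke Lemma~\ref{l:ext} to obtain $E$, and read off (i)--(iii) and uniqueness componentwise. One small expository point: your verification of (psm2) implicitly uses the commutation $E_\nu(A)P_{\lambda,\nu}=P_{\lambda,\nu}E_\nu(A)$ \emph{before} the restriction identity (so that $E_\nu(A)|\cH_\lambda$ really lands in $\cH_\lambda$), so it would be cleaner to present the block-diagonal argument first.
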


As a consequence of Theorem~\ref{t:st1} we can obtain the functional calculus with locally bounded Borel
functions, see Theorem~3.8 in \cite{Gheondea3}.

\begin{theorem}\label{t:bfc} Let $N\in\Lloc(\cH)$ be a locally normal operator and $E$ its locally spectral 
measure. Then the mapping
\begin{equation}\label{e:bfc}
B_{\mathrm{loc}}(\sigma(N))\ni \phi\mapsto \phi(N)=\int_{\sigma(N)} \phi(z)\de E(z)\in \Lloc(\cH)
\end{equation}
is a coherent $*$-representation of the locally $C^*$-algebra $B_{\mathrm{loc}}(\sigma(N))$, 
and it is the unique coherent $*$-morphism of locally $C^*$-algebras such that:
\begin{itemize}
\item[(i)] $\zeta(N)=N$ and $1(N)=I$, where $\zeta(z)=z$ and $1(z)=1$ for all $z\in\CC$.
\item[(ii)] For any net $(\phi_j)_{j\in\cJ}$ such that $\phi_j\ra 0$ pointwise and 
$\sup_{j\in\cJ}\sup_{x\in \sigma(N_\lambda)}|\phi_j(x)|<+\infty$ for each $\lambda\in\Lambda$, it follows that 
$\phi_j(N)\ra 0$ strongly operatorial.
\end{itemize}
\end{theorem}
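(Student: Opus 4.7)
The plan is to reduce everything to the classical Borel functional calculus for the component normal operators $N_\lambda$ and then transport the conclusions through the projective limit structure, exploiting Proposition~\ref{p:nes} and Theorem~\ref{t:st1}. With the setup of Theorem~\ref{t:st1}, take $(X,\widetilde\Omega)=(\sigma(N),\widetilde\cB)$, so that $B_{\mathrm{loc}}(\sigma(N))=B_{\mathrm{loc}}(X,\widetilde\Omega)=\varprojlim_\lambda B(\sigma(N_\lambda),\cB_\lambda)$ by \eqref{e:belal}. For $\phi\in B_{\mathrm{loc}}(\sigma(N))$, Lemma~\ref{l:lmf} says $\phi|_{\sigma(N_\lambda)}\in B(\sigma(N_\lambda),\cB_\lambda)$, and the classical Borel functional calculus provides a bounded normal operator $\phi(N_\lambda)=\int_{\sigma(N_\lambda)}\phi(z)\de E_\lambda(z)$. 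Then Proposition~\ref{p:nes} assembles the net $(\phi(N_\lambda))_{\lambda\in\Lambda}$ into a locally normal operator $\phi(N)=\varprojlim_\lambda \phi(N_\lambda)\in\Lloc(\cH)$, and already guarantees that $\phi\mapsto \phi(N)$ is a coherent $*$-representation. Continuity with respect to the locally $C^*$-algebra topologies is automatic from $\|\phi(N_\lambda)\|_{\cB(\cH_\lambda)}\leq p_\lambda(\phi)$.

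Next I would verify property (i) componentwise: the classical calculus gives $\zeta(N_\lambda)=N_\lambda$ and $1(N_\lambda)=I_\lambda$ via \eqref{e:nexus}, and passing to the projective limit \eqref{e:tpl} yields $\zeta(N)=N$ and $1(N)=I$. For (ii), fix $h\in\cH$ and choose $\lambda\in\Lambda$ with $h\in\cH_\lambda$; by coherence, $\phi_j(N)h=\phi_j(N_\lambda)h$ and $\|\phi_j(N)h\|_\cH=\|\phi_j(N_\lambda)h\|_{\cH_\lambda}$. The hypothesis on $(\phi_j)_j$ restricts to pointwise convergence to $0$ on $\sigma(N_\lambda)$ with uniform bound, so the dominated convergence theorem for the scalar-valued measure $A\mapsto\langle E_\lambda(A)h,h\rangle$ (equivalently, bounded convergence for spectral integrals in the Hilbert-space setting) yields $\phi_j(N_\lambda)h\to 0$ in $\cH_\lambda$, hence $\phi_j(N)h\to 0$ in $\cH$.

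For uniqueness, suppose $\Psi\colon B_{\mathrm{loc}}(\sigma(N))\to\Lloc(\cH)$ is another coherent $*$-morphism of locally $C^*$-algebras satisfying (i) and (ii). Coherence, combined with \eqref{e:belal} and \eqref{e:loclhs}, gives a family of $*$-morphisms of $C^*$-algebras $\Psi_\lambda\colon B(\sigma(N_\lambda),\cB_\lambda)\to \Lloc(\cH_\lambda)\subseteq\cB(\cH_\lambda)$ with $\Psi=\varprojlim_\lambda\Psi_\lambda$. Property (i) at level $\lambda$ forces $\Psi_\lambda(\zeta)=N_\lambda$ and $\Psi_\lambda(1)=I_\lambda$, and property (ii) transfers (by testing on vectors $h\in\cH_\lambda$, using that coherent liftings of uniformly bounded pointwise-convergent sequences on $\sigma(N_\lambda)$ into $B_{\mathrm{loc}}(\sigma(N))$ exist via extension by zero outside $\sigma(N_\lambda)$, which is automatically $\widetilde\cB$-measurable) into the bounded convergence hypothesis of the classical Borel functional calculus. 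The uniqueness clause of the classical Borel calculus (Theorem~IX.2.3 in \cite{Conway}) then identifies $\Psi_\lambda$ with the map $\phi_\lambda\mapsto \int\phi_\lambda\,\de E_\lambda$, and taking projective limits yields $\Psi(\phi)=\phi(N)$.

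The main obstacle is the uniqueness step, specifically the careful bookkeeping to extract from a single coherent morphism $\Psi$ on the locally $C^*$-algebra $B_{\mathrm{loc}}(\sigma(N))$ a componentwise data that genuinely satisfies the hypotheses of the classical uniqueness theorem; one must construct, for each bounded pointwise-convergent sequence of Borel functions on $\sigma(N_\lambda)$, a corresponding net in $B_{\mathrm{loc}}(\sigma(N))$ to which property (ii) applies, and then confirm that the resulting strong convergence on $\cH$ restricts to strong convergence on $\cH_\lambda$. Everything else reduces to invoking the projective-limit machinery of Section~\ref{s:ipl} together with Proposition~\ref{p:nes}.
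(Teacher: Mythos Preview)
Your proposal is correct and follows the natural componentwise strategy. Note, however, that the present paper does not actually supply a proof of this theorem: it merely states it and refers to Theorem~3.8 in \cite{Gheondea3} for the argument, so there is no in-paper proof to compare against. Your reduction to the classical Borel functional calculus on each $\cB(\cH_\lambda)$ via Proposition~\ref{p:nes}, followed by the uniqueness argument that lifts bounded pointwise-null sequences on $\sigma(N_\lambda)$ to $B_{\mathrm{loc}}(\sigma(N))$ by extension by zero, is exactly the expected route and matches the spirit of the projective-limit machinery assembled in Sections~\ref{s:ipl} and~\ref{ss:lsm}.
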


\begin{example}\label{ex:fm} With notation as in
Example~\ref{ex:locli} and Example~\ref{ex:lnop}, for $\phi\in L^\infty_{\mathrm{loc}}(X,\mu)$ consider
the locally normal operator $M_\phi\in\Lloc(L^2_\mathrm{loc}(X,\mu))$ of multiplication with $\phi$ as in
\eqref{e:mop}. We use the same symbol $\phi\colon X\ra \CC$ to denote a representative of the coset
of $\phi$. By Proposition~10.4 in \cite{Conway}, for each $\lambda\in\Lambda$ the spectral measure
$E_\lambda$ of $M_{\phi_\lambda}\in\cB(L^2(X_\lambda,\mu_\lambda))$ is defined by
\begin{equation}\label{e:elam}
E_\lambda(A)=M_{\chi_{\phi_\lambda^{-1}(A)}},\quad A\mbox{ is any Borel subset of }
\sigma(M_{\phi_\lambda})=
\essran(\phi_\lambda),
\end{equation}
where $\phi_\lambda=\phi|X_\lambda$. Then it can be shown that 
$((E_\lambda,\essran(\phi_\lambda),\cH_\lambda))_{\lambda\in\Lambda}$ is a projective system
of spectral measures and letting $E$ denote the corresponding
$\Lloc(\cH)$-valued locally spectral measure, defined on the $\sigma$-algebra of all Borel subsets
of
\begin{equation*}
\sigma(M_\phi)=\essran(\phi)
=\bigcup_{\lambda\in\Lambda}\essran(\phi_\lambda),
\end{equation*} it follows that
\begin{equation}\label{e:smac}
E(A)=M_{\chi_{\phi^{-1}(A)}},\ A\mbox{ is any Borel subset of }\sigma(M_\phi).
\end{equation}
On the other hand, as shown in Example~\ref{ex:lnop},
\begin{equation*}
M_\phi=\varprojlim_{\lambda\in\Lambda} M_{\phi_\lambda},
\end{equation*}
hence, in view of Theorem~\ref{t:st1}, $E$ defined 
at \eqref{e:smac} is the locally spectral measure of $M_\phi$.

In addition, if $\psi\in B_\mathrm{loc}(\sigma(M_\phi))$ then, in view of Theorem~\ref{t:bfc}, we have
$\psi(M_\phi)=M_{\psi\circ\phi}$.
\end{example}

\subsection{The Second Spectral Theorem.}\label{ss:tsst}
We need to perform certain operations with locally Hilbert spaces, such as orthogonal sums. 
Let $\cJ$ be an arbitrary set and consider a family
of Hilbert spaces $\bH=\{\cH_j\}_{j\in \cJ}$. We consider the bundle of Hilbert spaces
\begin{equation*}\cJ\ast \bH := \bigsqcup_{j\in\cJ}\cH_j=\{(j,h)\mid j\in\cJ,\ h\in \cH_j\},
\end{equation*}
and the canonical map $\pi\colon \cJ\ast \bH\ra \cJ$, $\pi(j,h)=j$ for all $(j,h)\in\cJ\ast \bH$. A \emph{section} of
the bundle $\cJ\ast \bH$ is a left inverse of $\pi$, that is, $f\colon \cJ\ra \cJ\ast \bH$ subject to the condition that,
for any $j\in\cJ$ we have $f(j)=(j,h)$, with $h\in\cH_j$,
Let $\cF_{\bH}(\cJ)$ denote the vector space of all sections of the bundle $\cJ\ast\bH$ and let 
$\cH=\bigoplus_{j\in \cJ}\cH_j$ denote the collection of all sections $(h_j)_{j\in \cJ}\in\cF_\bH(\cJ)$ 
subject to the condition that
\begin{equation*}\sum_{j\in \cJ}\|h_j\|_{\cH_j}^2<\infty.
\end{equation*}
A standard argument of Hilbert spaces shows that if $(h_j)_{j\in J}\in\cH$ then $h_j=0$ 
for all less an at most countable set of $j\in \cJ$. Then, if $h=(h_j)_{j\in \cJ}$ and 
$k=(k_j)_{j\in \cJ}$ are arbitrary sections in $\cH$ then, letting
\begin{equation}\label{e:laheka}
\langle h,k\rangle_0=\sum_{j\in J}\langle h_j,k_j\rangle_{\cH_j},
\end{equation}
correctly defines an inner product in $\cH$ with respect to which it becomes a Hilbert 
space and the summation is actually an absolutely convergent series. The Hilbert space 
$\cH=\bigoplus_{j\in \cJ}\cH_j$ defined before is called the \emph{special orthogonal direct sum} of $\bH$. This 
construction can be put in more general perspective by using reproducing kernel Hilbert spaces associated to 
operator valued kernels as in \cite{GheondeaTilki}, with details provided in \cite{GheondeaKulkarniPamula}, but 
for our purposes this is sufficient.
With these definitions, the proof of the next proposition is straightforward.

\begin{proposition}\label{p:dirsum} Let $\Lambda$ be a directed set and let $\cJ$ be an arbitrary index set. 
Assume that for each $j\in\cJ$ there is given a strictly inductive system of Hilbert spaces 
$(\cH_{\lambda,j})_{\lambda\in\Lambda}$. Then, the net of special orthogonal direct sums 
$(\bigoplus_{j\in\cJ} \cH_{\lambda,j})_{\lambda\in\Lambda}$
is a strictly inductive system of Hilbert spaces. In addition, if for any $j\in\cJ$ the strictly inductive system of
Hilbert spaces $(\cH_{\lambda,j})_{\lambda\in\Lambda}$ is representing then 
$(\bigoplus_{j\in\cJ} \cH_{\lambda,j})_{\lambda\in\Lambda}$
is a representing strictly inductive system of Hilbert spaces as well.
\end{proposition}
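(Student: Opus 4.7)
The plan is to check the four axioms (lhs1)--(lhs4) for the system $(\bigoplus_{j\in\cJ}\cH_{\lambda,j})_{\lambda\in\Lambda}$, and then, for the representing part, reduce the commutation of projections to a coordinate-wise check via Lemma~\ref{l:repof}.

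First, axiom (lhs1) is about the directed set $\Lambda$, which is given. For (lhs2), each Hilbert-space direct sum $\cH_\lambda:=\bigoplus_{j\in\cJ}\cH_{\lambda,j}$ is a Hilbert space with the inner product $\langle(h_j),(k_j)\rangle=\sum_{j\in\cJ}\langle h_j,k_j\rangle_{\cH_{\lambda,j}}$, understood in the sense of unconditional summability; this is standard. For (lhs3), fix $\lambda\leq\mu$. An element of $\bigoplus_{j\in\cJ}\cH_{\lambda,j}$ is a family $(h_j)_{j\in\cJ}$ with $h_j\in\cH_{\lambda,j}$ and $\sum_{j}\|h_j\|^2_{\cH_{\lambda,j}}<\infty$. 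Since $\cH_{\lambda,j}\subseteq\cH_{\mu,j}$ isometrically for every $j\in\cJ$ by the assumption that each $(\cH_{\lambda,j})_{\lambda\in\Lambda}$ is strictly inductive, the same family $(h_j)_{j\in\cJ}$ satisfies $h_j\in\cH_{\mu,j}$ and $\sum_{j}\|h_j\|^2_{\cH_{\mu,j}}=\sum_{j}\|h_j\|^2_{\cH_{\lambda,j}}<\infty$, hence lies in $\bigoplus_{j\in\cJ}\cH_{\mu,j}$. This inclusion is precisely the isometry required by (lhs4), since the partial-sum identity just displayed shows that the norm is preserved.

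For the representing part, fix $\lambda,\nu,\epsilon\in\Lambda$ with $\lambda\leq\epsilon$ and $\nu\leq\epsilon$, and, for each $j\in\cJ$, let $P_{\lambda,\epsilon}^{(j)},P_{\nu,\epsilon}^{(j)}\in\cB(\cH_{\epsilon,j})$ denote the orthogonal projections onto $\cH_{\lambda,j}$ and $\cH_{\nu,j}$, respectively. By Lemma~\ref{l:repof} applied to each of the representing systems $(\cH_{\lambda,j})_{\lambda\in\Lambda}$, we have $P_{\lambda,\epsilon}^{(j)}P_{\nu,\epsilon}^{(j)}=P_{\nu,\epsilon}^{(j)}P_{\lambda,\epsilon}^{(j)}$ for all $j\in\cJ$. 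On the direct sum $\cH_\epsilon=\bigoplus_{j\in\cJ}\cH_{\epsilon,j}$, the orthogonal projection $P_{\lambda,\epsilon}$ onto $\cH_\lambda=\bigoplus_{j\in\cJ}\cH_{\lambda,j}$ is precisely the diagonal operator $\bigoplus_{j\in\cJ}P_{\lambda,\epsilon}^{(j)}$ (both sides are Hermitian projections with the same range), and similarly for $P_{\nu,\epsilon}$. Since diagonal operators on a Hilbert space direct sum commute if and only if their coordinate components commute, we obtain $P_{\lambda,\epsilon}P_{\nu,\epsilon}=P_{\nu,\epsilon}P_{\lambda,\epsilon}$, and a second appeal to Lemma~\ref{l:repof} shows that $(\bigoplus_{j\in\cJ}\cH_{\lambda,j})_{\lambda\in\Lambda}$ is representing.

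There is no substantive obstacle here; the only points that require brief care are the verification that a family summable coordinatewise in the $\cH_{\lambda,j}$-norms is automatically summable in the $\cH_{\mu,j}$-norms (which follows immediately from the isometry of each embedding $\cH_{\lambda,j}\hookrightarrow\cH_{\mu,j}$), and the identification of the orthogonal projection onto a direct sum of subspaces with the direct sum of the componentwise orthogonal projections. Both are routine Hilbert-space facts, so the result follows without further difficulty.
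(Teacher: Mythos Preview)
Your proof is correct. The paper omits the proof entirely as ``straightforward,'' and your direct verification of (lhs1)--(lhs4) followed by the coordinatewise reduction via Lemma~\ref{l:repof} is exactly the routine argument one would expect.
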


The next example describes a special representing locally Hilbert space, that is slightly more general than
the concrete representing locally Hilbert space described in Example~\ref{ss:crlhs}, 
which turns out to be suitable for the representation of any locally normal operator on a representing locally 
Hilbert space in such a way that it contains information about the spectral multiplicity as well.

\begin{example}\label{ex:psconc}
For $\Lambda$ a given directed set and each $n\in\NN\cup\{\infty\}$, let 
$((X_{\lambda,n},\Omega_{\lambda,n},\mu_{\lambda,n}))_{\lambda\in\Lambda}$
be a strictly inductive system of measure spaces. In the following, for any natural number 
$n$ we let $\ell^2_n$ denote
the space $\CC^n$ organised as a Hilbert space in the natural way and let $\ell^2_\infty$ denote the Hilbert
space of all square summable complex sequences indexed on $\NN$. Also, with the interpretation of the 
spaces $ L^2(X_{\lambda,n},\mu_{\lambda,n})$ as in Subsection~\ref{ss:crlhs}, let
\begin{equation}\label{e:halen} 
\cH_{\lambda,n}:=\ell^2_n\otimes L^2(X_{\lambda,n},\mu_{\lambda,n}),\quad 1\leq n\leq 
\infty.
\end{equation} 
Since $L^2(X_{\lambda,n},\mu_{\lambda,n})$ is isometrically included in $L^2(X_{\nu,n},\mu_{\nu,n})$
whenever $\lambda,\nu\in \Lambda$ are such that $\lambda\leq \nu$, see Lemma~\ref{l:letexem} and Proposition~\ref{p:tensorprod}, 
it follows that $\cH_{\lambda,n}$
is isometrically embedded in $\cH_{\nu,n}$ and hence $(\cH_{\lambda,n})_{\lambda\in\Lambda}$ is a
strictly inductive system of Hilbert space for each $1\leq n\leq \infty$, see also Remark~\ref{r:tensorprod}.

Consequently, by Proposition~\ref{p:dirsum}, letting
\begin{equation}\label{e:hal}
\cH_\lambda:=\cH_{\lambda,\infty}\oplus\bigoplus_{n\geq 1} \cH_{\lambda,n},
\end{equation}
we get a representing strictly inductive system of Hilbert spaces $(\cH_\lambda)_{\lambda\in\Lambda}$ and 
let
\begin{equation}\label{e:haz}
\cH=\varinjlim_{\lambda\in\Lambda} \cH_\lambda.
\end{equation}

For each $n\in\NN\cup\{\infty\}$, as in Subsection~\ref{ss:sisms} and Subsection~\ref{ss:sismes},
we consider
\begin{equation}\label{e:xenaa}
(X_n,\Omega_n,\mu_n)=\varinjlim_{\lambda\in\Lambda} (X_{\lambda,n},\Omega_{\lambda,n},
\mu_{\lambda,n}), 
\end{equation}
that is,
\begin{equation}\label{e:xenas}
X_n=\bigcup_{\lambda\in\Lambda}X_{\lambda,n},\quad \Omega_n=\bigcup_{\lambda\in\Lambda}\Omega_{\lambda,n},
\end{equation}
and $\mu_n\colon\Omega_n\ra [0,\infty]$, defined by,
\begin{equation}\label{e:menas}
\mu_n(A)=\mu_{\lambda,n}(A),\quad A\in\Omega,
\end{equation}
where $\lambda\in\Lambda$ is any index such that $A\in\Omega_{\lambda,n}$. Also, as in \eqref{e:tomega},
let
\begin{equation}\label{e:tomegas} \widetilde\Omega_n:=\{A\subseteq X_n\mid A\cap 
X_{\lambda,n}\in\Omega_{\lambda,n}\mbox{ for all }\lambda\in\Lambda\}
\end{equation}
and recall that $\widetilde\Omega_n$, by Proposition~3.1 in \cite{Gheondea3},
is a $\sigma$-algebra on $X_n$ which extends $\Omega_n$. Also, as
in \eqref{e:wimu},
define $\widetilde\mu_n\colon\widetilde\Omega_n\ra [0,+\infty]$ by
\begin{equation}\label{e:wimun}
\widetilde\mu_n(A):=\sup_{\lambda\in\Lambda} \mu_{\lambda,n}(A\cap X_{\lambda,n}),\quad 
A\in\widetilde\Omega_n,
\end{equation}
and recall, see Proposition~\ref{p:wimu}, that it is a measure.

Further on, as in Example~\ref{ex:locli}, for each integer $n\in\NN\cup\{\infty\}$, 
let $\phi_n\in L^\infty_\mathrm{loc}(X_n,\mu_n)$, that is, 
$\phi_n\colon X_n\ra [0,\infty]$ is a function, identified $\widetilde\mu_n$-almost everywhere, subject to the
property that $\phi_{\lambda,n}:=\phi_n|_{X_{\lambda,n}}\in L^\infty(X_{\lambda,n},\mu_{\lambda,n})$ 
for all $\lambda\in \Lambda$. In addition, we assume that, for each $\lambda\in\Lambda$ we have
\begin{equation}\label{e:bnd}
\sup_{n\geq 1}\|\phi_{\lambda,n}\|_\infty<\infty.
\end{equation}
Then, for each $n\in\NN\cup\{\infty\}$ we consider the representing locally Hilbert space
\begin{equation}\label{e:eltwolocn}
L^2_{\mathrm{loc}}(X_n,\mu_n)=\varinjlim_{\lambda\in\Lambda}L^2(X_{\lambda,n},\mu_{\lambda,n}),
\end{equation}
the inductive limit of the strictly inductive system of Hilbert spaces
$(L^2(X_{\lambda,n},\mu_{\lambda,n}))_{\lambda\in\Lambda}$ and then,
as in Example~\ref{ex:lnop}, we consider the operator $M_{\phi_n}
\colon L^2_\mathrm{loc}(X_n,\mu_n)\ra L^2_\mathrm{loc}(X_n,\mu_n)$ defined by
\begin{equation}\label{e:mefin}
M_{\phi_n}f:=\phi_n f,\quad f\in L^2_\mathrm{loc}(X_n,\mu_n).
\end{equation}
Recall that
\begin{equation}\label{e:samon}
\sigma(M_{\phi_n})=\essran(\phi_n):=\bigcup_{\lambda\in\Lambda} \essran(\phi_{\lambda,n}),
\end{equation}
and that the operator $M_{\phi_n}$ is locally normal. 

In addition, the mapping 
$L^\infty_\mathrm{loc}(X_n,\mu_n)\ni \phi \mapsto M_\phi \in\cB_\mathrm{loc}(L^2_\mathrm{loc}(X_n,\mu_n))$ 
is an injective $*$-morphism of locally $C^*$-algebras and, because of this, 
in the following, for simplicity, we will denote by 
$L^\infty_\mathrm{loc}(X_n,\mu_n)$ also the range of this injective $*$-morphism. Thus, 
$L^\infty_\mathrm{loc}(X_n,\mu_n)$ is viewed as a 
concrete locally $C^*$-algebra of locally bounded operators
acting on the locally Hilbert space $L^2_\mathrm{loc}(X_n,\mu_n)$.

For simplicity, as before, for each $\lambda\in\Lambda$ and each $n\in\NN\cup\{\infty\}$, we will identify
the $C^*$-algebra $L^\infty(X_{\lambda,n},\mu_{\lambda,n})$ with the range of the faithful $*$-representation
$L^\infty(X_{\lambda,n},\mu_{\lambda,n})\ni \phi\mapsto M_\phi\in\cB(L^2(X_{\lambda,n},\mu_{\lambda,n}))$.
In particular, with this conventional notation, we have
\begin{equation}
L^\infty_\mathrm{loc}(X_n,\mu_n)=\varprojlim_{\lambda\in\Lambda} L^\infty(X_{\lambda,n},\mu_{\lambda,n}).
\end{equation}

Letting $\phi=(\phi_n)_{n\in\NN\cup\{\infty\}}$, we observe that the net
of operators
\begin{equation}
(I_\infty\otimes M_{\phi_{\lambda,\infty}})\oplus 
\bigoplus_{n\geq 1} (I_n\otimes M_{\phi_{\lambda,n}})\in \cB(\cH_\lambda),\quad \lambda\in\Lambda,
\end{equation}
where $\cH_\lambda$ is defined in \eqref{e:hal}, satisfies the conditions in Proposition~\ref{p:lbo2} and,
consequently, this enables us to define the locally bounded operator $M_\phi\in \cB_\mathrm{loc}(\cH)$,
with notation as in \eqref{e:haz}, by
\begin{equation}
M_\phi:=\varprojlim_{\lambda\in\Lambda}\bigl((I_\infty\otimes M_{\phi_{\lambda,\infty}})\oplus 
\bigoplus_{n\geq 1} (I_n\otimes M_{\phi_{\lambda,n}})\bigr).
\end{equation}
Taking into account the assumption \eqref{e:bnd}, 
it is easy to see that $M_\phi$ is a locally normal operator on $\cH$ and
\begin{equation}\label{e:mafi}
\sigma(M_\phi)=\bigcup_{n\in\NN\cup\{\infty\}} \sigma(M_{\phi_n})= \bigcup_{n\in\NN\cup\{\infty\}}
\bigcup_{\lambda\in\Lambda} \essran(\phi_{\lambda,n}).
\end{equation} 

In the following we calculate the spectral measure of the locally normal operator $M_\phi$ and show how the 
representing locally Hilbert space defined at \eqref{e:hal} contains information on the spectral multiplicity of 
$M_\phi$. Let us fix $n\in \NN\cup\{\infty\}$. Taking into account Example~\ref{ex:fm}, the spectral measure
of the operator $M_{\phi_n}$, when viewed as a locally normal operator acting on the locally Hilbert space
$L^2_{\mathrm{loc}}(X_n,\mu_n)$, see \eqref{e:eltwolocn}, is obtained in the following way. For each 
$\lambda\in\Lambda$, as in \eqref{e:elam}, let
\begin{equation}\label{e:elaman}
E_{\lambda,n}(A)=M_{\chi_{\phi_{\lambda,n}^{-1}(A)}},\quad A\mbox{ is any Borel subset of }
\sigma(M_{\phi_{\lambda,n}})=
\essran(\phi_{\lambda,n}),
\end{equation}
where $\phi_{\lambda,n}=\phi_n|X_\lambda$. Then we have that
$((E_{\lambda,n},\essran(\phi_\lambda),\cH_{\lambda,n}))_{\lambda\in\Lambda}$ is a projective system
of spectral measures and letting $E_n$ denote the corresponding
$\Lloc(\cH_n)$-valued locally spectral measure, defined on the $\sigma$-algebra of all Borel subsets
of
\begin{equation*}
\sigma(M_{\phi_n})=\essran(\phi_n)
=\bigcup_{\lambda\in\Lambda}\essran(\phi_{\lambda,n}),
\end{equation*} it follows that
\begin{equation}\label{e:smacan}
E_n(A)=M_{\chi_{\phi_n^{-1}(A)}},\ A\mbox{ is any Borel subset of }\sigma(M_{\phi_n}).
\end{equation}
Also,
\begin{equation*}
M_{\phi_n}=\varprojlim_{\lambda\in\Lambda} M_{\phi_{\lambda_n}},
\end{equation*}
hence, in view of Theorem~\ref{t:st1}, $E_n$ defined 
at \eqref{e:smacan} is the locally spectral measure of $M_{\phi_n}$.

From here, it follows that the locally normal operator $I_n\otimes M_{\phi_n}$ defined as
\begin{equation}\label{e:iano}
I_n\otimes M_{\phi_n}=\varprojlim_{\lambda\in\Lambda} (I_n\otimes M_{\phi_{\lambda,n}})
\end{equation}
and acting on the locally Hilbert space $L^2_{\mathrm{loc}}(X_n,\mu_n)$ defined as in \eqref{e:eltwolocn},
has its spectral measure exactly $I_n\otimes E_n$, with the locally spectral measure $E_n$ defined as
in \eqref{e:smacan}. In particular, this shows that the spectral multiplicity of the locally normal operator 
$I_n\otimes M_{\phi_n}$ is exactly $n$. For example, if $z\in\CC$ is an eigenvalue of 
$I_n\otimes M_{\phi_n}$ then $z$ has 
spectral multiplicity $n$. In view of \eqref{e:mafi}, it follows that $z$ is an eigenvalue of $M_\phi$ of multiplicity 
$n$. Actually, more is true, any eigenvalue of $M_\phi$ of multiplicity $n$ should live in the locally Hilbert space
$\cH_n$ defined as in \eqref{e:halen} and viewed as a locally Hilbert subspace of $\cH$, see \eqref{e:haz}.
\end{example}

A representing strictly inductive system of Hilbert spaces $(\cH_\lambda)_{\lambda\in\Lambda}$ as in 
Example~\ref{ex:psconc} is called \emph{pseudo-concrete} and the corresponding
representing locally Hilbert space $\cH=\varinjlim_{\lambda\in\Lambda}\cH_\lambda$ is called 
\emph{pseudo-concrete} as well.

In the following we will use 
some basic knowledge on Abelian von Neumann algebras which can be seen, for 
example, in the classical texts of J.~Dixmier \cite{Dixmier},  
\c S.~Str\u atil\u a and L.~Zsido \cite{StratilaZsido} or the more modern books of R.V. Kadison and J.R. Ringrose
\cite{KadisonRingrose} or that 
of R. B.~Blackadar \cite{Blackadar}.
Let us recall that, given a Hausdorff topological space $X$, a measure space $(X,\Omega,\mu)$ is called
\emph{locally finite} if any open set of $X$ is measurable and for any $x\in X$ there exists $V$ an open 
neighbourhood of $x$ such that $\mu(V)$ is finite. We will intensively use the following result, see
Corollary III.1.5.18 in \cite{Blackadar}.

\begin{theorem}\label{t:abvn}
Let $\mathcal{Z}$ 
be an Abelian von Neumann algebra on the Hilbert space $\mathcal{H}$. Then, there exists a 
unique decomposition
\begin{equation*}
\mathcal{H}=\mathcal{H}_\infty\oplus\bigoplus_{n\geq 1} \mathcal{H}_n,
\end{equation*}
such that, for each $n\in \mathbb{N}\cup \{\infty\}$, $\mathcal{H}_n$ is invariant under $\mathcal{Z}$ and there 
exists a locally finite measure space $(X_n,\Omega_n,\mu_n)$ such that the 
compression of $\mathcal{Z}$ to $\mathcal{H}_n$, denoted by  $\mathcal{Z}_{\mathcal{H}_n}:=
P_{\cH_n}\mathcal{Z}|_{\cH_n}$, when viewed as a von Neumann algebra in $\cB(\cH_n)$,
is spatially isomorphic to the $n$-fold amplification 
$I_n\otimes L^\infty(X_n,\mu_n)$ acting on $\ell^2_n\otimes L^2(X_n,\mu_n)$, that is, there exists a unitary 
operator 
\begin{equation*}U_n\colon \cH_n\ra \ell^2_n\otimes L^2(X_n,\mu_n),\end{equation*} 
such that the map
\begin{equation*}
\mathcal{Z}_{\mathcal{H}_n}\ni T\mapsto  U_n TU_n^*\in I_n\otimes L^\infty(X_n,\mu_n)
\end{equation*}
is an isomorphism of von Neumann algebras.
\end{theorem}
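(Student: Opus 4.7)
The plan is to follow the standard route via spectral multiplicity theory for type I von Neumann algebras, since the commutant $\cZ'$ of an abelian von Neumann algebra $\cZ$ is automatically of type I. First I would invoke the central decomposition of $\cZ'$ into homogeneous summands: there exist unique, mutually orthogonal central projections $(P_n)_{n\in\NN\cup\{\infty\}}$ in the center $\cZ\cap\cZ''=\cZ$ (the last equality holds because $\cZ$ is abelian and a von Neumann algebra, so it equals its double commutant, and the center of $\cZ'$ is $\cZ''\cap\cZ'=\cZ$) such that $\sum_n P_n=I$ and, on $\cH_n:=P_n\cH$, the reduced algebra $P_n\cZ'$ is homogeneous of type $I_n$. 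These are exactly the standard multiplicity projections, and their uniqueness gives the uniqueness of the claimed decomposition.

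Next, on each $\cH_n$, homogeneity of type $I_n$ of $P_n\cZ'$ provides a spatial identification $\cH_n\cong \ell^2_n\otimes \cK_n$ under which $P_n\cZ'$ goes over to $\cB(\ell^2_n)\otimes \cZ_n$, where $\cZ_n$ is a maximal abelian von Neumann algebra on $\cK_n$. Taking commutants in this tensor decomposition forces the compression $\cZ_{\cH_n}=P_n\cZ|_{\cH_n}$ to be identified with $I_n\otimes \cZ_n$. The key point here is that maximality of $\cZ_n$, together with the fact that a maximal abelian von Neumann algebra always admits a cyclic and separating vector, places us in the situation where one can apply the classical spectral theorem for abelian von Neumann algebras with a cyclic vector: $\cZ_n$ is spatially isomorphic to $L^\infty(X_n,\mu_n)$ acting by multiplication on $L^2(X_n,\mu_n)$ for some locally finite measure space $(X_n,\Omega_n,\mu_n)$. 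Composing this unitary with $I\otimes(\cdot)$ on $\ell^2_n\otimes\cK_n$ yields the required unitary $U_n\colon \cH_n\ra \ell^2_n\otimes L^2(X_n,\mu_n)$ and the isomorphism $\cZ_{\cH_n}\ni T\mapsto U_nTU_n^*\in I_n\otimes L^\infty(X_n,\mu_n)$.

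The main obstacle, in my view, is the third step, the concrete identification of a maximal abelian von Neumann algebra with an $L^\infty$-algebra on a locally finite measure space. In the separable-Hilbert-space case this is the classical result that goes through the Gelfand spectrum of a weakly dense $C^*$-subalgebra and a cyclic-and-separating vector state, producing a standard finite or $\sigma$-finite measure; in the non-separable case, one must work harder to obtain a \emph{locally finite} (rather than $\sigma$-finite) measure space, decomposing the cyclic-and-separating vector into an orthogonal family of cyclic subspaces for suitable central projections of $\cZ_n$. This is precisely where one appeals to Blackadar's treatment, Corollary~III.1.5.18 in \cite{Blackadar}. Uniqueness of the decomposition and of the multiplicity labels $n$ then follows from the fact that spatial isomorphism preserves type $I_n$ homogeneity, so the $P_n$ are intrinsically determined by $\cZ$.
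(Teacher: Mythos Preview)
The paper does not supply its own proof of this theorem; it simply quotes it as Corollary~III.1.5.18 in \cite{Blackadar} and uses it as a black box in the proofs of Theorems~\ref{t:st2} and~\ref{t:st3}. So there is nothing to compare against on the paper's side.

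Your sketch is the standard route and is essentially what lies behind Blackadar's statement: pass to the commutant $\cZ'$, which is type~I with center $\cZ$, take its canonical decomposition into homogeneous type~$I_n$ summands via central projections, and then identify each maximal abelian piece with a multiplication algebra. One small caution: the sentence ``a maximal abelian von Neumann algebra always admits a cyclic and separating vector'' is only valid in the $\sigma$-finite/separable setting; in general one must instead produce an orthogonal family of cyclic projections summing to the identity, which is exactly the step that forces the measure to be merely locally finite rather than finite or $\sigma$-finite. You already flag this as the delicate point and defer to \cite{Blackadar}, which is precisely what the paper does.
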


The first step in our enterprise is to obtain the spectral theorem for locally normal operators on 
representing locally Hilbert spaces as multiplication operators with functions in $L^\infty_{\mathrm{loc}}$ in a 
form which contains also the spectral multiplicity aspect  encoded 
in the structure of the pseudo-concrete representing locally Hilbert space, as in Example~\ref{ex:psconc}. 
Note that in Theorem~\ref{t:abvn} there is nothing said on the uniqueness of the topological-measure spaces
$(X_n,\Omega_n,\mu_n)$, although a certain uniqueness property holds in a weak sense. 
The uniqueness property is one of the obstructions that prevent us from
obtaining the Second Spectral Theorem for locally normal operators without imposing additional 
assumptions on the directed set $\Lambda$. A second obstruction is related to the fact that we want a functional 
model modulo a locally unitary operator which has to be built in some way from bottom-up and this requires an 
induction process.
For these technical reasons, we will restrict our results to the special case when the directed set $\Lambda$ 
satisfies the following conditions.

\begin{itemize}
\item[(c1)] There exists a sequence $(\epsilon_m)_{m\geq 1}$ of elements of $\Lambda$ such that for 
each $m\geq 1$ we have $\epsilon_m\leq \epsilon_{m+1}$.
\item[(c2)] For each $\lambda\in \Lambda$ there exists $m\geq 1$ such that $\lambda\leq\epsilon_m$.
\item[(c3)]  For each $m\geq 1$ the set of all $\lambda\in\Lambda$ such that $\lambda\leq\epsilon_m$ 
is finite.
\end{itemize}
Because this set of conditions will play an important role in the main theorems, a directed set $\Lambda$ 
satisfying properties (c1) through (c3) is called \emph{sequentially finite}.

In the following, for each $\epsilon\in\Lambda$ we use the notation $\Lambda_\epsilon$ 
as defined in \eqref{e:lal}. It follows by (c2) and (c3) that for every $\lambda_0\in\Lambda$ the set of all 
$\lambda\in\Lambda$ with $\lambda\leq\lambda_0$ is finite.
Consequently, if the directed set $\Lambda$ satisfies conditions (c1) through (c3), it is necessarily countable.
Also, in each of the examples \ref{ex:hata1} through \ref{ex:hata3}, the strictly inductive systems associated to 
the Hata tree-like set satisfy conditions (c1) through (c3).

\begin{remark}\label{r:repas}
The next two theorems, which are the main results of this article, are stated for locally normal operators
acting on representing locally Hilbert spaces. On the one hand, as we will see in the next 
theorem, the assumption on the underlying locally Hilbert space to be representing allows us to obtain $*$-
isomorphisms implemented by locally unitary operators. On the other hand, given an arbitrary locally normal 
operator $N$, such that it acts on a locally Hilbert space that might not be representing, and considering the 
locally $C^*$-algebra $\cA$ generated by it, according to Theorem~5.1 in \cite{Inoue} and Example~\ref{e:rlhs}, 
we can always find a representing locally Hilbert space $\cH$ 
such that $\cA$ is $*$-isomorphic to a locally $C^*$-subalgebra of $\Lloc(\cH)$. Consequently,
if we drop the assumption on the underlying 
locally Hilbert space to be representing, we still can get a functional model in a more general sense.
\end{remark}

\begin{theorem}\label{t:st2} Assume that the directed set $\Lambda$ is sequentially finite.
Let $N\in\cB_\mathrm{loc}(\cH)$ be a locally normal operator on the representing locally Hilbert space
$\cH=\varinjlim_{\lambda\in\Lambda}\cH_\lambda$. Then, for each $n\in\NN\cup\{\infty\}$,
there exists a strictly inductive system of locally finite 
measure spaces $((X_{\lambda,n},\Omega_{\lambda,n},\mu_{\lambda,n})\}_{\lambda\in\Lambda}$ and,
with respect to 
\begin{equation*}
(X_n,\Omega_n,\mu_n)=\varinjlim_{\lambda\in\Lambda}(X_{\lambda,n},\Omega_{\lambda,n},\mu_{\lambda,n}),
\end{equation*}
defined as in \eqref{e:ilsm}, there exists $\phi_n\in L^\infty_\mathrm{loc}(X_n,\mu_n)$, 
see Example~\ref{ex:locli} for definition, such that, for each $\lambda\in\Lambda$,
\begin{equation*}
\sup_{n\geq 1}\|\phi_n|_{X_{\lambda,n}}\|_\infty<\infty,
\end{equation*}
and there exists a locally unitary operator
\begin{equation*}
U\colon \cH\ra \varinjlim_{\lambda\in\Lambda}\biggl(\bigl(\ell^2_\infty\otimes L^2(X_{\lambda,\infty},
\mu_{\lambda,\infty})\bigr)
\oplus \bigoplus_{n\geq 1} \bigl(\ell^2_n\otimes L^2(X_{\lambda,n},\mu_{\lambda,n})\bigr)\biggr),
\end{equation*}
where the spaces $L^2(X_{\lambda,n},\mu_{\lambda,n})$ are viewed as in Subsection~\ref{ss:crlhs},
such that
\begin{equation*}
U^*NU=\varprojlim_{\lambda\in\Lambda}\bigl((I_\infty\otimes M_{\phi_{\infty}|_{X_{\lambda,\infty}}})\oplus
\bigoplus_{n\geq 1} (I_n\otimes M_{\phi_{n}|_{X_{\lambda,n}}})\bigr).
\end{equation*}
\end{theorem}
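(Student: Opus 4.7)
The plan is to combine Theorem~\ref{t:abvn} on the structure of abelian von Neumann algebras with a careful inductive construction along the cofinal sequence $(\epsilon_m)_{m\geq 1}$ furnished by the sequentially finite condition, using the representing hypothesis throughout to obtain the necessary commutation and coherence. For each $\lambda\in\Lambda$, I would work with the abelian von Neumann algebra $\cZ_\lambda\subseteq \cB(\cH_\lambda)$ generated by $N_\lambda$, $N_\lambda^*$, and the finite family of projections $\{P_{\mu,\lambda}\mid \mu\in\Lambda_\lambda\}$ of $\cH_\lambda$ onto its subspaces $\cH_\mu$. That $\cZ_\lambda$ is abelian follows from two ingredients: the projections $P_{\mu,\lambda}$ pairwise commute by Lemma~\ref{l:repof} thanks to the representing property (lch5), and they commute with $N_\lambda$, $N_\lambda^*$ by Proposition~\ref{p:mat} (each $\cH_\mu$ reduces $N_\lambda$ for $\mu\leq\lambda$). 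By (c3), this family is finite, keeping $\cZ_\lambda$ tractable.

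Next I would build the measure spaces, functions, and unitaries by induction on the sequence $(\epsilon_m)$. In the base case $m=1$, Theorem~\ref{t:abvn} applied to $\cZ_{\epsilon_1}$ yields a decomposition $\cH_{\epsilon_1}=\cH_{\epsilon_1,\infty}\oplus\bigoplus_{n\geq 1}\cH_{\epsilon_1,n}$, locally finite measure spaces $(X_{\epsilon_1,n},\Omega_{\epsilon_1,n},\mu_{\epsilon_1,n})$, and unitaries $V_{\epsilon_1,n}\colon\cH_{\epsilon_1,n}\to \ell^2_n\otimes L^2(X_{\epsilon_1,n},\mu_{\epsilon_1,n})$ intertwining $\cZ_{\epsilon_1}$ with the multiplication algebras; the image of $N_{\epsilon_1}$ produces $\phi_{\epsilon_1,n}\in L^\infty(X_{\epsilon_1,n},\mu_{\epsilon_1,n})$. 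For the inductive step, since $P_{\epsilon_m,\epsilon_{m+1}}\in\cZ_{\epsilon_{m+1}}$, the subspace $\cH_{\epsilon_m}$ reduces $\cZ_{\epsilon_{m+1}}$, so the splitting $\cH_{\epsilon_{m+1}}=\cH_{\epsilon_m}\oplus (\cH_{\epsilon_{m+1}}\ominus\cH_{\epsilon_m})$ reduces the algebra. Exploiting the freedom in Theorem~\ref{t:abvn} to synchronize the decomposition on $\cH_{\epsilon_m}$ with the one already constructed at stage $m$, and applying Theorem~\ref{t:abvn} to the compression of $\cZ_{\epsilon_{m+1}}$ to $\cH_{\epsilon_{m+1}}\ominus\cH_{\epsilon_m}$ to obtain fresh measure spaces $(Y_{m+1,n},\Xi_{m+1,n},\nu_{m+1,n})$, I would define $X_{\epsilon_{m+1},n}$ as the disjoint union $X_{\epsilon_m,n}\sqcup Y_{m+1,n}$ with the natural combined measurable and measure structures, and $V_{\epsilon_{m+1},n}$ as the direct sum of the extension of $V_{\epsilon_m,n}$ with the new unitary. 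This automatically gives $X_{\epsilon_m,n}\subseteq X_{\epsilon_{m+1},n}$ with $\mu_{\epsilon_{m+1},n}$ extending $\mu_{\epsilon_m,n}$.

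For an arbitrary $\lambda\in\Lambda$, pick via (c2) some $\epsilon_m\geq\lambda$; the projection $P_{\lambda,\epsilon_m}\in\cZ_{\epsilon_m}$ corresponds under the component unitaries to multiplication by a characteristic function $\chi_{X_{\lambda,n}}$ of a measurable subset $X_{\lambda,n}\subseteq X_{\epsilon_m,n}$. Set $\Omega_{\lambda,n}:=\{A\cap X_{\lambda,n}\mid A\in\Omega_{\epsilon_m,n}\}$, let $\mu_{\lambda,n}$ be the restriction of $\mu_{\epsilon_m,n}$, and let $V_{\lambda,n}$ be the restriction of $V_{\epsilon_m,n}$; independence of the choice of $m$ is built into the inductive coherence. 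Defining $\phi_n$ on $X_n=\bigcup_\lambda X_{\lambda,n}$ by $\phi_n|_{X_{\lambda,n}}=\phi_{\lambda,n}$ gives a well-defined element of $L^\infty_\mathrm{loc}(X_n,\mu_n)$, and the uniform bound $\sup_n\|\phi_n|_{X_{\lambda,n}}\|_\infty\leq \|N_\lambda\|_{\cB(\cH_\lambda)}<\infty$ holds because $N_\lambda$ is unitarily equivalent to the orthogonal sum $\bigoplus_n (I_n\otimes M_{\phi_n|_{X_{\lambda,n}}})$ whose operator norm equals this supremum. Finally, Proposition~\ref{p:lbo2} assembles the $V_{\lambda,n}$ into the locally unitary $U$ with the stated intertwining property.

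The main obstacle lies in the inductive step. Theorem~\ref{t:abvn} delivers its decomposition only up to spatial equivalence, so synchronizing the decomposition of $\cH_{\epsilon_m}$ as a compression of $\cZ_{\epsilon_{m+1}}$ with the one already chosen at stage $m$ requires using the weak uniqueness part of Theorem~\ref{t:abvn} and carefully prescribing the new measure spaces as disjoint extensions of the old. This is precisely what forces the sequentially finite condition on $\Lambda$: ordinary mathematical induction along the countable cofinal sequence replaces a transfinite coherent-choice argument, and (c3) keeps each $\Lambda_{\epsilon_m}$ finite so that at every stage $\cZ_{\epsilon_m}$ has finitely many generators and the corresponding multiplicity/measurable bookkeeping remains manageable.
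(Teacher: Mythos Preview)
Your proposal follows essentially the same route as the paper: you work with the very same commutative von Neumann algebras (the paper calls them $\cM_\lambda$), you run induction along the cofinal sequence $(\epsilon_m)_m$, you invoke Theorem~\ref{t:abvn}, and you assemble the locally unitary $U$ via Proposition~\ref{p:lbo2}, identifying correctly the synchronisation at the inductive step as the crux. The only organisational difference is bookkeeping: the paper first decomposes $\cH_{\epsilon_m}$ into the finitely many atoms of the Boolean algebra generated by $\{P_{\mu,\epsilon_m}:\mu\in\Lambda_{\epsilon_m}\}$, applies Theorem~\ref{t:abvn} atom by atom, and then reassembles the measure space $X_{\lambda,n}$ for each $\lambda\leq\epsilon_m$ as the disjoint union of the atom-level measure spaces corresponding to the atoms contained in $\cH_\lambda$; you instead apply Theorem~\ref{t:abvn} once to $\cZ_{\epsilon_m}$ on all of $\cH_{\epsilon_m}$ and read off $X_{\lambda,n}$ as the measurable set for which $V_{\epsilon_m,n}P_{\lambda,\epsilon_m}V_{\epsilon_m,n}^*=I_n\otimes M_{\chi_{X_{\lambda,n}}}$. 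Since the atom projections lie in $\cZ_{\epsilon_m}$ and any projection in $\cZ_{\epsilon_m}$ restricted to a multiplicity-$n$ piece is carried to $I_n\otimes M_{\chi_A}$, these two descriptions coincide; the paper's atom language simply makes the disjoint-union structure of $X_{\lambda,n}$ explicit, which is convenient when gluing at the next stage.
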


\begin{proof} 
We start with a construction that will be used several times during the proof. 
For any $\lambda\in\Lambda_\epsilon$ we consider the orthogonal projection 
$P_{\lambda,\epsilon}\in\cB(\cH_\epsilon)$ with $\ran(P_{\lambda,\epsilon})=\cH_\lambda$. 
Let $\cM_\epsilon$ be the von Neumann algebra in $\cB(\cH_\epsilon)$ generated by $N_\epsilon$ and
$P_{\lambda,\epsilon}$, for all $\lambda\in\Lambda_\epsilon$. Formally, 
\begin{equation}\label{e:wstare}
\cM_\epsilon:=W^*(\{N_\epsilon\}\cup\{ P_{\lambda,\epsilon}\mid \lambda\in\Lambda_\epsilon\}).
\end{equation} 
The von Neumann algebra $\cM_\epsilon$ is commutative. To see this, by 
Proposition~\ref{p:lbo2}, we have the commutation of $N_\epsilon$ and $N_\epsilon^*$ 
with all orthogonal projections $P_{\lambda,\epsilon}$, $\lambda\in\Lambda_\epsilon$. 
Then, by Lemma~\ref{l:repof} we have the mutual commutation 
of the family of orthogonal projections $\{P_{\lambda,\epsilon}\}_{\lambda\leq\epsilon}$ because the locally 
Hilbert space $\cH$ is representing. Finally, $N_\epsilon$ commutes with $N_\epsilon^*$ since $N$ is locally 
normal.

We construct inductively, following $m\in \NN$, where $(\epsilon_m)_{m\geq 1}$ 
is the sequence of elements in $\Lambda$ subject to the conditions (c1) through (c3),
the strictly inductive systems of locally finite measure spaces 
$((X_{\lambda,n},\Omega_{\lambda,n},\mu_{\lambda,n})\}_{\lambda\in\Lambda}$, for all $n\in\NN\cup\{\infty\}$, 
that satisfy the following. \medskip

\textbf{Statement S.} \emph{For each $m\in \NN$ and for each $\lambda\in \Lambda_{\epsilon_m}$,
there exists a unique decomposition 
\begin{equation}\label{e:hepsi}
\cH_\lambda=\cH_{\lambda,\infty}\oplus\bigoplus_{n\geq 1}\cH_{\lambda,n}
\end{equation}  
such that, for each $n\in\NN\cup\{\infty\}$,
\begin{itemize}
\item[(i)] $\cH_{\lambda,n}$ is invariant under $\cM_\lambda$ and, consequently, 
the compression of $\cM_\lambda$ to $\cH_{\lambda,n}$,
\begin{equation*}
\cM_{\lambda,\cH_n}:=\{P_{\cH_{\lambda,n}}T|_{\cH_{\lambda,n}}\mid T\in\cM_\lambda\}\subseteq \cB(\cH_{\lambda,n}),
\end{equation*}
is a commutative von Neumann algebra;
\item[(ii)] there exists a unitary operator $U_{\lambda,n}\colon \cH_{\lambda,n}\ra \ell^2_n\otimes 
L^2(X_{\lambda,n},\mu_{\lambda,n})$ that implements the spatial isomorphism of von Neumann algebras
\begin{equation}\label{e:phie}
I_n\otimes L^\infty(X_{\lambda,n},\mu_{\lambda,n})\ni(I_n\otimes \phi)\mapsto
U_{\lambda,n}^* (I_n\otimes M_\phi)U_{\lambda,n}\in \cM_{\lambda,\cH_n}.
\end{equation}
\end{itemize}
}

In order to avoid trivial cases, without any loss of generality, 
we assume that for each $m\in\NN$ we have $\epsilon_m\neq \epsilon_{m+1}$ and that $\cH_{\epsilon_m}$ is a 
proper subspace of $\cH_{\epsilon_{m+1}}$. 

For $m=1$, with notation as in \eqref{e:lal}, we consider the finite set 
$\Lambda_{\epsilon_1}=\{\lambda_{0,1},\ldots,\lambda_{0,k_0}\}$ and 
note that, with notation as in \eqref{e:wstare}, the commutative von Neumann algebra $\cM_{\epsilon_1}$ is 
generated by $N_{\epsilon_1},P_{\lambda_{0,1},\epsilon_1},\ldots, P_{\lambda_{0,k_0},\epsilon_1}$. Recall that 
all these operators act in the Hilbert space $\cH_{\epsilon_1}$. For example, $P_{\lambda_{0,1},\epsilon_1}$ is 
the orthogonal projection of $\cH_{\epsilon_1}$ onto $\cH_{\lambda_{0,1}}$ and so on.
Due to the mutual commutation of the projections 
$P_{\lambda_{0,1},\epsilon_1},\ldots, P_{\lambda_{0,k_0},\epsilon_1}$, by employing ranges of all nonzero 
projections which are finite products of 
projections of type $P_{\lambda_{0,i},\epsilon_1}$ and $I_{\cH_{\epsilon_1}}-P_{\lambda_{0,j},\epsilon_1}$, for
$i,j\in\{1,\ldots,k_0\}$,
the Hilbert space $\cH_{\epsilon_1}$ 
can be split into an orthogonal sum of subspaces $\cH_{\epsilon_1,1},\ldots,\cH_{\epsilon_1,l_1}$
\begin{equation}
\cH_{\epsilon_1}=\cH_{\epsilon_1,1}\oplus\cdots\oplus\cH_{\epsilon_1,l_1},
\end{equation} 
all of them
invariant under $\cM_{\epsilon_1}$ and such that, for each $j\in \{1,\ldots,k_0\}$, the Hilbert space 
$\cH_{\lambda_{0,j}}$ is an orthogonal sum of some of these subspaces, 
$\cH_{\epsilon_1,p_1},\ldots,\cH_{\epsilon_1,p_{s_j}}$, that is
\begin{equation}\label{e:haloj}
\cH_{\lambda_{0,j}}= \cH_{\epsilon_1,p_1}\oplus \cdots \oplus \cH_{\epsilon_1,p_{s_j}}.
\end{equation}

For each $l=1,\ldots,l_1$, we apply Theorem~\ref{t:abvn} to the commutative reduced von Neumann algebra 
${\cM_{\epsilon_1}}_{\cH_{\epsilon_1,l}}$ and get a unique decomposition
\begin{equation}
\cH_{\epsilon_1,l}= \cH_{\epsilon_1,l,\infty}\oplus \bigoplus_{n\geq 1}\cH_{\epsilon_1,l,n},
\end{equation} 
such that, for each $n\in \NN\cup\{\infty\}$,
\begin{itemize}
\item[(i)] $\cH_{\epsilon_1,l,n}$ is invariant under $\cM_{\epsilon_1}$ and, consequently, 
$\cM_{\epsilon_1,\cH_{\epsilon_1,l,n}}$, the compression of $\cM_{\epsilon_1}$ to $\cH_{\epsilon_1,l,n}$, 
is a commutative von Neumann algebra in $\cB(\cH_{\epsilon_1,l,n})$;
\item[(ii)] there exists a locally finite measure space 
$(X_{\epsilon_1,l,n},\Omega_{\epsilon_1,l},\mu_{\epsilon_1,l,n})$  and a unitary operator $U_{\epsilon_1,l,n}
\colon \cH_{\epsilon_1,l,n}\ra \ell^2_n\otimes L^2(X_{\epsilon_1,l,n},\mu_{\epsilon_1,l,n})$ such that, the mapping
\begin{equation*}
I_n\otimes L^\infty(X_{\epsilon_1,l,n},\mu_{\epsilon_1,l,n})\ni I_n\otimes \phi \mapsto U_{\epsilon_1,l,n}^* 
(I_n\otimes M_\phi) U_{\epsilon_1,l,n}\in \cM_{\epsilon_1,\cH_{\epsilon_1,l,n}}
\end{equation*}
is an isomorphism of von Neumann algebras.
\end{itemize}

For each $j\in \{1,\ldots,k_0\}$, correspondingly to the decomposition \eqref{e:haloj}, and for each 
$n\in\NN\cup\{\infty\}$, we define
\begin{equation}\label{e:xala}
X_{\lambda_{0,j},n}:= \bigsqcup_{s=1}^{s_j} X_{\epsilon_1,s,n},
\end{equation}
$\Omega_{\lambda_{0,j},n}$ is the $\sigma$-algebra generated by $\Omega_{\epsilon_n,s,n}$ for all
$s=1,\ldots,s_j$, and the measure $\mu_{\lambda_{0,j},n}$ is defined by 
\begin{equation*}\mu_{\lambda_{0,j},n}(A):=\sum_{s=1}^{s_j} \mu_{\epsilon_1,s,n}(A\cap X_{\epsilon_1,s,n}),
\quad A\in \Omega_{\lambda_{0,j},n}.\end{equation*}
It is easy to see that, in this fashion, for each $n\in\NN\cup\{\infty\}$, the locally finite measure spaces 
$(X_{\lambda_{0,j},n},\Omega_{\lambda_{0,j},n},\mu_{\lambda_{0,j},n})$, for $j\in \{1,\ldots,k_0\}$, 
make a strictly inductive system. In view of \eqref{e:xala}, we define the unitary operators $U_{\lambda_{0,j},n}$
as the direct sum of the unitary operators $U_{\epsilon_1,s,n}$ for $s=1,\ldots,s_j$, for each $j=1,\ldots,k_0$.

Let us assume that, for a fixed $m\geq 1$ and for all $n\in\NN\cup\{\infty\}$, 
we have a strictly inductive system of measure spaces 
$(X_{\lambda,n},\Omega_{\lambda,n},\mu_{\lambda,n})_{\lambda \in\Lambda_{\epsilon_m}}$ as in Statement S. With 
notation as in \eqref{e:lal}, by 
assumption, $\Lambda_{\epsilon_{m+1}}$ is finite and contains $\Lambda_{\epsilon_m}$. We split
\begin{equation*}
\Lambda_{\epsilon_{m+1}}=\Lambda_{\epsilon_m}\sqcup (\Lambda_{\epsilon_{m+1}}\setminus \Lambda_{\epsilon_m}),
\end{equation*}
hence
\begin{equation*}
\Lambda_{\epsilon_m}=\{\lambda_{m,1},\ldots,\lambda_{m,k_m}\},\quad \Lambda_{\epsilon_{m+1}}\setminus 
\Lambda_{\epsilon_m}=\{\lambda_{m,k_m+1},\ldots,\lambda_{m,k_m+k_{m+1}}\},
\end{equation*}
for some natural numbers $k_m$ and $k_{m+1}$.
Then, the von Neumann algebra $\cM_{\epsilon_{m+1}}$ is generated by $N_{\epsilon_{m+1}}$
and the projections $P_{\lambda_{m,1},\epsilon_{m+1}}$ through 
$P_{\lambda_{m,k_m+k_{m+1}},\epsilon_{m+1}}$, where,
for example, $P_{\lambda_{m,1},\epsilon_{m+1}}$
is the projection of $\cH_{\epsilon_{m+1}}$ onto $\cH_{\lambda_{m,1}}$, and so on. Recall that all these 
operators act in $\cH_{\epsilon_{m+1}}$ and mutually commute, in particular the von Neumann algebra 
$\cM_{\epsilon_{m+1}}$ is commutative. Then, similarly to what we did for the case $m=1$, 
by employing ranges of all nonzero 
projections which are finite products of 
projections $P_{\lambda_{m,i},\epsilon_{m+1}}$ and $I_{\cH_{\epsilon_{m+1}}}-P_{\lambda_{m,j},\epsilon_{m+1}}$, for all 
$i,j\in\{1,\ldots,k_m\}$,
the Hilbert space $\cH_{\epsilon_{m+1}}$ can be split into 
mutually orthogonal subspaces $\cH_{\epsilon_{m+1},1},\ldots,\cH_{\epsilon_{m+1},l_m},
\cH_{\epsilon_{m+1},l_m+1},\ldots,\cH_{\epsilon_{m+1},l_m+l_{m+1}}$, all of them invariant under 
$\cM_{\epsilon_{m+1}}$,
\begin{equation}\label{e:hememu}
\cH_{\epsilon_{m+1}}=\bigoplus_{j=1}^{l_m} \cH_{\epsilon_{m+1},j}\oplus \bigoplus_{j=1}^{l_{m+1}} 
\cH_{\epsilon_{m+1},l_m+j},
\end{equation}
such that,
\begin{equation}\label{e:hemem}
\cH_{\epsilon_m} = \bigoplus_{j=1}^{l_m} \cH_{\epsilon_{m+1},j},
\end{equation}
for each $k=1,\ldots,k_m$, there exist indices $j_1,\ldots,j_{p_k}\in\{1,\ldots,l_m\}$, $p_k\leq l_m$, such that
\begin{equation}\label{e:celam}
\cH_{\lambda_{m,k}}=\bigoplus_{i=1}^{p_k} \cH_{\epsilon_{m+1},j_{p_i}},
\end{equation}
and, for each $k=k_m+1,\ldots,k_m+k_{m+1}$, there exist indices $j_1,\ldots,j_{p_k}\in \{1,\ldots,l_m+l_{m+1}\}$, 
$p_k\leq l_m+l_{m+1}$, such that \eqref{e:celam} holds.

With respect to the components $\cH_{\epsilon_{m+1},j}$, for $j=1,\ldots,l_m$, 
in the first orthogonal sum from the right hand side of
\eqref{e:hememu} and the components $\cH_{\epsilon_{m+1},l_m+j}$, for $j=1,\ldots,l_{m+1}$, in the latter orthogonal sum, we have to treat 
separately each of these cases.

\emph{First case:} $j\in \{1,\ldots,l_m\}$. 
By assumption, Statement S holds for $\lambda=\epsilon_m$ hence, there exists a unique decomposition
\begin{equation}\label{e:hepsiem}
\cH_{\epsilon_m}=\cH_{\epsilon_m,\infty}\oplus\bigoplus_{n\geq 1}\cH_{\epsilon_m,n}
\end{equation}  
and, for each $n\in\NN\cup\{\infty\}$, there exist the locally finite measure space 
$(X_{\epsilon_m,n},\Omega_{\epsilon_m,n},\mu_{\epsilon_m,n})$ and the unitary operator 
$U_{\epsilon_m,n}\colon \cH_{\epsilon_m,n}\ra \ell^2_n\otimes 
L^2(X_{\epsilon_m,n},\mu_{\epsilon_m,n})$ such that \eqref{e:phie} holds for $\lambda=\epsilon_m$.
Then,
in view of \eqref{e:hemem} and the fact that all the spaces there involved are invariant under 
$\cM_{\epsilon_{m}}$, 
by considering the projection of $\cH_{\epsilon_{m+1}}$ onto $\cH_{\epsilon_{m+1},j}$,
for each $n\in\NN\cup\{\infty\}$ we can get a locally finite measure space 
$(X_{\epsilon_{m+1},j,n},\Omega_{\epsilon_{m+1},j,n},\mu_{\epsilon_{m+1},j,n})$ which is obtained by restriction 
from the locally finite measure space $(X_{\epsilon_m,n},\Omega_{\epsilon_m,n},\mu_{\epsilon_m,n})$, and a 
unitary operator $U_{\epsilon_{m+1},j,n}\colon \cH_{\epsilon_{m+1},j,n}\ra \ell^2_n\otimes 
L^2(X_{\epsilon_{m+1},j,n},\mu_{\epsilon_{m+1},j,n})$, where we have the decomposition 
\begin{equation}
\cH_{\epsilon_{m+1},j}=\cH_{\epsilon_{m+1},j,\infty}\oplus\bigoplus_{n=1}^\infty \cH_{\epsilon_{m+1},j,n},
\end{equation}
which is obtained from \eqref{e:hepsiem}, such that the unitary operator $U_{\epsilon_{m+1},j,n}$ implements 
the spatial isomorphism of the reduced von Neumann algebra $\cM_{\epsilon_{m+1},\cH_{\epsilon_{m+1},j,n}}$ 
with the $n$-times amplification von Neumann algebra 
$I_n\otimes L^\infty(X_{\epsilon_{m+1},j,n},\mu_{\epsilon_{m+1},j,n})$.

\emph{Second case:} $j\in \{l_m+1,\ldots,l_{m}+l_{m+1}\}$. In this case we have to play differently and apply 
Theorem~\ref{t:abvn} and get the unique decomposition
\begin{equation}
\cH_{\epsilon_{m+1},j}=\cH_{\epsilon_{m+1},j,\infty}\oplus\bigoplus_{n=1}^\infty \cH_{\epsilon_{m+1},j,n},
\end{equation}
with all these spaces invariant under $\cM_{\epsilon_{m+1}}$, and, for each $n\in\NN\cup\{\infty\}$,
we can get a locally finite measure space 
$(X_{\epsilon_{m+1},j,n},\Omega_{\epsilon_{m+1},j,n},\mu_{\epsilon_{m+1},j,n})$
and a 
unitary operator $U_{\epsilon_{m+1},j,n}\colon \cH_{\epsilon_{m+1},j,n}\ra \ell^2_n\otimes 
L^2(X_{\epsilon_{m+1},j,n},\mu_{\epsilon_{m+1},j,n})$ which implements 
the spatial isomorphism of the reduced von Neumann algebra $\cM_{\epsilon_{m+1},\cH_{\epsilon_{m+1},j,n}}$ 
with the $n$-times amplification von Neumann algebra 
$I_n\otimes L^\infty(X_{\epsilon_{m+1},j,n},\mu_{\epsilon_{m+1},j,n})$.

Further on, let $\lambda\in \Lambda_{\epsilon_{m+1}}\setminus \Lambda_{\epsilon_m}$, hence $\lambda=\lambda_{m,k_m+k}$ 
for some $k\in \{1,\ldots,k_{m+1}\}$. Then there exist indices $j_1,\ldots,j_{p_k}\in \{1,\ldots,l_m+l_{m+1}\}$, 
$p_k\leq l_m+l_{m+1}$, such that \eqref{e:celam} holds. For each $n\in \NN\cup\{\infty\}$ 
we define  
\begin{equation}\label{e:xalam}
X_{\lambda_{m,k_m+k,n}} = \bigsqcup_{i=1}^{p_k} X_{\epsilon_{m+1},p_i,n},
\end{equation}
let $\Omega_{\lambda_{m,k_m+k,n}}$ be the $\sigma$-algebra generated by 
$\Omega_{\epsilon_{m+1},p_i,n}$ for $i=1,\ldots,p_k$, and define the measure 
$\mu_{\epsilon_{m+1},k_m+k,n}\colon\Omega_{\epsilon_{m+1},k_m+k,n}\ra [0,+\infty]$ by
\begin{equation}
\mu_{\epsilon_{m+1},k_m+k,n}(A):= \sum_{i=1}^{p_k} \mu_{\epsilon_{m+1},p_i,n}(A\cap X_{\epsilon_{m+1},p_i,n}),\quad A\in \Omega_{\epsilon_{m+1},k_m+k,n}.
\end{equation}
The unitary operators $U_{\lambda_{m,k_m+k,n}}$ are defined, in view of  \eqref{e:xalam}, as the direct sum of
operators $U_{\epsilon_{m+1},p_i,n}$ for $i=1,\ldots,p_k$.
It is now easy to see that Statement S holds for $\epsilon_{m+1}$ and the general induction step is proven, which concludes the proof of Statement S for all $m\in\NN$.

Once the Statement S is proven, in view of the technical condition (c2), it follows that, for each $n\in\NN\cup\{\infty\}$ we have the strictly inductive system of locally finite measure spaces
$((X_{\lambda,n},\Omega_{\lambda,n},\mu_{\lambda,n})\}_{\lambda\in\Lambda_{\epsilon_m}}$,
such that, for each $\lambda\in \Lambda$, the statements (i) and (ii) as in Statement S hold.
Then, as in Example~\ref{ex:psconc},
we can define the pseudo-concrete representing locally Hilbert space
\begin{equation}
\varinjlim_{\lambda\in\Lambda}\biggl(\bigl(\ell^2_\infty\otimes L^2(X_{\lambda,\infty})\bigr)\oplus
\bigoplus_{n\geq 1}\bigl(\ell^2_n\otimes L^2(X_{\lambda,n})\bigr)\biggl).
\end{equation}

Further on, for any $\lambda\in\Lambda$ we consider the unitary operator
\begin{equation}\label{e:ulam}
U_\lambda:=U_{\lambda,\infty}\oplus\bigoplus_{n\geq 1}U_{\lambda,n}\colon
\cH_\lambda \ra \bigl(\ell^2_\infty\otimes L^2(X_{\lambda,\infty})\bigr)\oplus
\bigoplus_{n\geq 1}\bigl(\ell^2_n\otimes L^2(X_{\lambda,n})\bigr),
\end{equation}
and then, by Proposition~\ref{p:lbo2}, we observe that the net $(U_\lambda)_{\lambda\in\Lambda}$
gives rise to a locally unitary operator 
\begin{equation}\label{e:uvar}
U:=\varprojlim_{\lambda\in\Lambda} U_\lambda\colon \cH=\varinjlim_{\lambda\in\Lambda} \cH_\lambda
\ra \varinjlim_{\lambda\in\Lambda} \bigl(\ell^2_\infty\otimes L^2(X_{\lambda,\infty})\bigr)\oplus
\bigoplus_{n\geq 1}\bigl(\ell^2_n\otimes L^2(X_{\lambda,n})\bigr).
\end{equation}

Finally, $N=\varprojlim_{\lambda\in\Lambda} N_\lambda$ where $N_\lambda\in\cM_\lambda\subset
\cB(\cH_\lambda)$ is normal for each $\lambda \in\Lambda$. In view of \eqref{e:hepsi}, for each 
$\lambda\in\Lambda$ and each $n\in\NN\cup\{\infty\}$, let $Q_{\lambda,n}$ be the projection of 
$\cH_\lambda$ onto $\cH_{\lambda,n}$.
Then, for each $\lambda\in\Lambda$ and
each $n\in\NN\cup\{\infty\}$ the normal operator $N_{\lambda,n}:=Q_{\lambda,n}N_\lambda |_{\cH_{\lambda,n}}
\in \cM_{\lambda,n}\subset \cB(\cH_{\lambda,n})$ hence, there exists uniquely 
$\phi_{\lambda,n}\in L^\infty(X_{\lambda,n},\mu_{\lambda,n})$ such that
\begin{equation}\label{e:nela}
N_{\lambda,n}=U^*_{\lambda,n}(I_n\otimes M_{\phi_{\lambda,n}}) U_{\lambda,n}.
\end{equation}
Then,
\begin{equation}\label{e:fela}
\sup_{n\geq 1}\|\phi_{\lambda,n}\|_\infty=\sup_{n\geq 1}\|N_{\lambda,n}\|
=\sup_{n\geq 1}\|Q_{\lambda,n}N_\lambda |_{\cH_{\lambda,n}}\|\leq \|N_\lambda\|<\infty.
\end{equation}
From \eqref{e:nela} we get
\begin{align}
N_\lambda & = N_{\lambda,\infty}\oplus\bigoplus_{n\geq 1} N_{\lambda,n}\nonumber \\
& = U^*_{\lambda,\infty}(I_\infty\otimes M_{\phi_{\lambda,n}}) U_{\lambda,\infty} \oplus
\bigoplus_{n\geq 1} U^*_{\lambda,n}(I_n\otimes M_{\phi_{\lambda,n}}) U_{\lambda,n}\nonumber \\
& = U^*_{\lambda}\bigl((I_\infty\otimes M_{\phi_{\lambda,\infty}}) \oplus \bigoplus_{n\geq 1}
(I_n\otimes M_{\phi_{\lambda,n}})\bigr) U_{\lambda}\label{e:nulae}
\end{align}
and then, by taking the projective limits on both sides we get
\begin{equation}\label{e:neu}
N= U^*\varprojlim_{\lambda\in\Lambda}\bigl((I_\infty\otimes M_{\phi_{\lambda,\infty}})\oplus
\bigoplus_{n\geq 1}(I_n\otimes M_{\phi_{\lambda,n}})\bigr)U,
\end{equation}
where the locally unitary operator $U$ is defined at \eqref{e:uvar}.
\end{proof}

The previous theorem can be transferred to a spectral theorem for locally normal operators on representing 
locally Hilbert spaces in terms of projective limits of multiplication operators on concrete representing 
locally Hilbert spaces.

\begin{theorem}\label{t:st3}
Let $N\in\cB_\mathrm{loc}(\cH)$ be a locally normal operator on the representing locally Hilbert space
$\cH=\varinjlim_{\lambda\in\Lambda}\cH_\lambda$ and assume that the directed set $\Lambda$ is sequentially 
finite. Then, there exists a strictly inductive system of locally finite 
measure spaces $((X_{\lambda},\Omega_{\lambda},\mu_{\lambda}))_{\lambda\in\Lambda}$ and,
with respect to 
\begin{equation*}
(X,\Omega,\mu)=\varinjlim_{\lambda\in\Lambda}(X_{\lambda},\Omega_{\lambda},\mu_{\lambda}),
\end{equation*}
defined as in \eqref{e:ilsm}, there exists $\phi\in L^\infty_\mathrm{loc}(X,\mu)$, 
see Example~\ref{ex:locli} for definition, and there exists a locally unitary operator
\begin{equation*}
V\colon \cH\ra \varinjlim_{\lambda\in\Lambda} L^2(X_{\lambda},\mu_\lambda),
\end{equation*}
such that, letting $M_\phi=\varprojlim_{\lambda\in\Lambda}M_{\phi|_{X_{\lambda}}}$ as in
 Example~\ref{ex:lnop}, we have
\begin{equation*}
N=V^*M_\phi V.
\end{equation*}
\end{theorem}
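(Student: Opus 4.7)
The plan is to derive Theorem~\ref{t:st3} directly from Theorem~\ref{t:st2} by flattening the pseudo-concrete functional model produced there into a single concrete one. First, apply Theorem~\ref{t:st2} to obtain, for every $n\in\NN\cup\{\infty\}$, a strictly inductive system of locally finite measure spaces $((X_{\lambda,n},\Omega_{\lambda,n},\mu_{\lambda,n}))_{\lambda\in\Lambda}$, functions $\phi_n\in L^\infty_\mathrm{loc}(X_n,\mu_n)$ satisfying the uniform bound $\sup_{n\ge 1}\|\phi_n|_{X_{\lambda,n}}\|_\infty<\infty$ for each $\lambda\in\Lambda$, and a locally unitary operator $U$ implementing the representation \eqref{e:neu}.

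The key identification is the canonical Hilbert space isomorphism
\begin{equation*}
\ell^2_n\otimes L^2(Z,\nu)\cong L^2(Z\times K_n,\nu\otimes\#),
\end{equation*}
where $K_n=\{1,\ldots,n\}$ for $n\in\NN$, $K_\infty=\NN$, and $\#$ denotes counting measure. Realising $Z\times K_n$ as a disjoint union $Y_{\lambda,n}:=\bigsqcup_{k\in K_n} X_{\lambda,n}^{(k)}$ of measurable copies of $X_{\lambda,n}$, equipped with the obvious $\sigma$-algebra $\Xi_{\lambda,n}$ and the sum $\nu_{\lambda,n}$ of the copy-measures, yields a unitary $W_{\lambda,n}\colon \ell^2_n\otimes L^2(X_{\lambda,n},\mu_{\lambda,n})\to L^2(Y_{\lambda,n},\nu_{\lambda,n})$ that identifies the amplification $I_n\otimes M_{\phi_n|_{X_{\lambda,n}}}$ with multiplication by the function $\widetilde\phi_{\lambda,n}$ that equals $\phi_n|_{X_{\lambda,n}}$ on each copy. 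Since $X_{\lambda,n}\subseteq X_{\nu,n}$ compatibly whenever $\lambda\le\nu$, the copies can be chosen so that $Y_{\lambda,n}\subseteq Y_{\nu,n}$, giving a strictly inductive system $((Y_{\lambda,n},\Xi_{\lambda,n},\nu_{\lambda,n}))_{\lambda\in\Lambda}$ of locally finite measure spaces, with $(W_{\lambda,n})_{\lambda\in\Lambda}$ compatible with the inclusions.

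Next I would assemble, for each $\lambda\in\Lambda$, the measure space
\begin{equation*}
X_\lambda:=\bigsqcup_{n\in\NN\cup\{\infty\}} Y_{\lambda,n},
\end{equation*}
with the natural $\sigma$-algebra $\Omega_\lambda$ generated by the $\Xi_{\lambda,n}$ and the measure $\mu_\lambda$ obtained by summing the $\nu_{\lambda,n}$ across $n$. Local finiteness is preserved componentwise: every point of $X_\lambda$ lies in a unique summand $Y_{\lambda,n}$, which is locally finite. For $\lambda\le\nu$, the componentwise inclusions give $X_\lambda\subseteq X_\nu$ with $\Omega_\lambda=\{A\cap X_\lambda\mid A\in\Omega_\nu\}$ and $\mu_\lambda=\mu_\nu|_{\Omega_\lambda}$, so $((X_\lambda,\Omega_\lambda,\mu_\lambda))_{\lambda\in\Lambda}$ is a strictly inductive system of locally finite measure spaces. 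Define $\phi\colon X\to\CC$ on $X=\bigcup_{\lambda\in\Lambda}X_\lambda$ by $\phi|_{Y_{\lambda,n}}=\widetilde\phi_{\lambda,n}$; the uniform bound from Theorem~\ref{t:st2} yields $\|\phi|_{X_\lambda}\|_\infty\le\sup_{n\ge 1}\|\phi_n|_{X_{\lambda,n}}\|_\infty<\infty$, so $\phi\in L^\infty_\mathrm{loc}(X,\mu)$.

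Finally, the operators $W_\lambda:=W_{\lambda,\infty}\oplus\bigoplus_{n\ge 1}W_{\lambda,n}$ combine with the pseudo-concrete unitaries $U_\lambda$ from the proof of Theorem~\ref{t:st2} to give a net of unitaries $V_\lambda:=W_\lambda U_\lambda\colon\cH_\lambda\to L^2(X_\lambda,\mu_\lambda)$. Proposition~\ref{p:lbo2} applied to this net, together with the compatibility of the $W_{\lambda,n}$ with the inductive structure, produces a locally unitary $V=\varprojlim_{\lambda\in\Lambda}V_\lambda$, and the identity $N=V^*M_\phi V$ follows from \eqref{e:neu} by passing to projective limits and using the functional-model description in Example~\ref{ex:lnop}. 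The main obstacle is the bookkeeping needed to verify that all of the identifications genuinely commute with the strictly inductive structure; in particular, the $n=\infty$ case requires checking that the countable disjoint union yields a $\sigma$-algebra and measure on $X_\lambda$ compatible with restriction from $X_\nu$ for $\nu\ge\lambda$, and that the coherence conditions (lbo1) and (lbo2) for the net $(V_\lambda)_{\lambda\in\Lambda}$ follow from the corresponding ones for $(U_\lambda)_{\lambda\in\Lambda}$ under these identifications.
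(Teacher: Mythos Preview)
Your proposal is correct and follows essentially the same approach as the paper: the paper also derives Theorem~\ref{t:st3} from Theorem~\ref{t:st2} by taking, for each $\lambda$ and each $n\in\NN\cup\{\infty\}$, $n$ (respectively countably many) disjoint copies of $(X_{\lambda,n},\Omega_{\lambda,n},\mu_{\lambda,n})$, forming $X_\lambda$ as the total disjoint union, and defining $V_\lambda=T_\lambda U_\lambda$ where $T_\lambda$ is exactly your $W_\lambda$. The only differences are notational, and the paper carries out explicitly the bookkeeping you flag at the end.
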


\begin{proof} With notation as in Theorem~\ref{t:st2}, fix $\lambda\in\Lambda$ arbitrary. For each $n\in\NN$ and 
each $k=1,\ldots,n$, let $(X_{\lambda,n,k},\Omega_{\lambda,n,k},\mu_{\lambda,n,k})$ be a copy of 
$(X_{\lambda,n},\Omega_{\lambda,n},\mu_{\lambda,n})$. For each $k\in\NN$ 
let $(X_{\lambda,\infty,k},\Omega_{\lambda,\infty,k},\mu_{\lambda,\infty,k})$ be a copy of
$(X_{\lambda,\infty},\Omega_{\lambda,\infty},\mu_{\lambda,\infty})$. Then let $X_\lambda$ be defined by disjoint 
union of all these sets,
\begin{equation}\label{e:xela} X_\lambda:= \bigsqcup_{k=1}^\infty X_{\lambda,\infty,k}\sqcup
\bigsqcup_{n=1}^\infty\bigsqcup_{k=1}^n X_{\lambda,n,k}.
\end{equation} 
On $X_\lambda$ we define the $\sigma$-algebra $\Omega_\lambda$ in the following way: an arbitrary set 
$S\subseteq X_\lambda$ belongs to $\Omega_\lambda$ if and only if 
$S\cap X_{\lambda,n,k}\in \Omega_{\lambda,n,k}$, for all $n\in \NN$ and all $k=1,\ldots,n$, and 
$S\cap X_{\lambda,\infty,k}\in \Omega_{\lambda,\infty,k}$ for all $k\in\NN$. On $X_\lambda$ we can also define 
the topology in a similar fashion: an arbitrary set $S\subseteq X$ is open if and only if  
$S\cap X_{\lambda,n,k}$ is open in $X_{\lambda,n,k}$, for all $n\in \NN$ and all $k=1,\ldots,n$, and 
$S\cap X_{\lambda,\infty,k}$ is open in $X_{\lambda,\infty}$ for all $k\in\NN$. Then it can be seen that any open 
subset of $X_\lambda$ is measurable.
The measure $\mu_\lambda$ on $(X_\lambda,\Omega_\lambda)$ is defined in the natural fashion: 
for any $S\in \Omega_\lambda$ let
\begin{equation}\label{e:mela}
\mu_\lambda(S):=\sum_{k=1}^\infty \mu_{\lambda,\infty,k}(S\cap X_{\lambda,\infty,k})
+ \sum_{n=1}^\infty \sum_{k=1}^n \mu_{\lambda,n,k}(S\cap X_{\lambda,n,k}).
\end{equation}
Then we can see that the measure space $(X_\lambda,\Omega_\lambda,\mu_\lambda)$ is locally finite.

It is easy to see that $(X_\lambda,\Omega_\lambda,\mu_\lambda)_{\lambda\in\Lambda}$ is a strictly inductive
system of measure spaces and let $(X,\Omega,\mu)$ be its inductive limit in the sense of 
definitions \eqref{e:lms} and \eqref{e:lmsa}. Then let $L^2_{\mathrm{loc}}(X,\mu)$ be the inductive limit of the
strictly inductive system of Hilbert spaces $(L^2(X_\lambda,\mu_\lambda))_{\lambda\in\Lambda}$ in the sense of
\eqref{e:eltwoloc}.  Also, from the definition of the measure space 
$(X_\lambda,\Omega_\lambda,\mu_\lambda)$,
see \eqref{e:xela} and \eqref{e:mela}, it follows that there is a natural identification (unitary operator)
\begin{equation}
L^2(X_\lambda,\mu_\lambda)\simeq \bigoplus_{k=1}^\infty L^2(X_{\lambda,\infty,k},\mu_{\lambda,\infty,k})\oplus
\bigoplus_{n=1}^\infty\bigoplus_{k=1}^n L^2(X_{\lambda,n,k},\mu_{\lambda,n,k}),
\end{equation}
and then, it is easy to see that there exists a canonical unitary operator
\begin{equation}
T_\lambda\colon  \bigl(\ell^2_\infty\otimes L^2(X_{\lambda,\infty},
\mu_{\lambda,\infty})\bigr)
\oplus \bigoplus_{n\geq 1} \bigl(\ell^2_n\otimes L^2(X_{\lambda,n},\mu_{\lambda,n})\bigr)\ra L^2(X_\lambda,\mu_\lambda).
\end{equation}
In addition, the net $(T_\lambda)_{\lambda\in\Lambda}$ is a projective system and 
hence, by Proposition~\ref{p:lbo2}, its projective limit 
\begin{equation}T=\varprojlim_{\lambda\in\Lambda}T_\lambda\colon 
\varinjlim_{\lambda\in\Lambda} \bigl(\ell^2_\infty\otimes L^2(X_{\lambda,\infty},
\mu_{\lambda,\infty})\bigr)
\oplus \bigoplus_{n\geq 1} \bigl(\ell^2_n\otimes L^2(X_{\lambda,n},\mu_{\lambda,n})\bigr)
\ra L^2_{\mathrm{loc}}(X,\mu)
\end{equation} 
is a locally unitary operator. 

We now define the functions $\phi_\lambda$, for an arbitrary fixed $\lambda\in\Lambda$. For each 
$n\in\NN\cup\{\infty\}$ consider the function $\phi_{\lambda,n}\in L^\infty(X_{\lambda,n},\mu_{\lambda,n})$ 
as in \eqref{e:nela}. In view of  \eqref{e:xela}, for any $n\in\NN$ and $k=1,\ldots,n$, let 
$\phi_\lambda|_{X_{\lambda,n,k}}:=\phi_{\lambda,n}$ and for any $k\in\NN$ let 
$\phi_\lambda|_{X_{\lambda,\infty,k}}:=\phi_{\lambda,\infty}$. In view of \eqref{e:fela}, 
$\phi_\lambda\in L^\infty(X_\lambda,\mu_\lambda)$. In addition, because $(N_\lambda)_{\lambda\in\Lambda}$ 
is a projective system of normal operators, it follows that $(\phi_\lambda)_{\lambda\in\Lambda}$ is a projective 
system of functions and hence there exists uniquely its projective limit
$\phi\in L^\infty_{\mathrm{loc}}(X,\mu)$ as in Example~\ref{ex:locli}. Then, for each $\lambda\in\Lambda$, we
have
\begin{equation}\label{e:mefix}
M_{\phi|_{X_\lambda}} =T_\lambda \bigl((I_\infty\otimes M_{\phi_{\lambda,\infty}}) \oplus \bigoplus_{n\geq 1}
(I_n\otimes M_{\phi_{\lambda,n}})\bigr)
T_\lambda^*.
\end{equation}

Finally, we define $V_\lambda=T_\lambda U_\lambda$, for all $\lambda\in\Lambda$, where the locally unitary 
operator $U_\lambda$ is defined at \eqref{e:ulam}  and we notice that $(V_\lambda)_{\lambda \in\Lambda}$ is a projective system of unitary operators hence, by Proposition~\ref{p:lbo2}, there exists uniquely the locally
unitary operator $V=\varprojlim_{\lambda\in\Lambda} V_\lambda$.
By \eqref{e:nulae}  and \eqref{e:mefix} we have
\begin{align*}
N & =\varprojlim_{\lambda\in\Lambda} N_\lambda  = \varprojlim_{\lambda\in\Lambda}
U^*_{\lambda}\bigl((I_\infty\otimes M_{\phi_{\lambda,\infty}}) \oplus \bigoplus_{n\geq 1}
(I_n\otimes M_{\phi_{\lambda,n}})\bigr) U_{\lambda} \\
& = \varprojlim_{\lambda\in\Lambda} U_\lambda^* T_\lambda^* M_{\phi|_{X_\lambda}} T U_\lambda 
= U^*T^* M_\phi TU= V^* M_\phi V,
\end{align*}
and the proof is finished. 
\end{proof}

\subsection{The Third Spectral Theorem. Direct Integral Representations.}\label{ss:ttst}

As a direct consequence of the fact that Theorem~\ref{t:st2} contains information on the spectral multiplicity, 
we can show that from here one can obtain a variant of the Third Spectral Theorem for locally normal operators, 
that is, in terms of direct integrals of locally Hilbert spaces. The idea is to follow the lines of the proof of 
Theorem~\ref{t:st3} but in such a way that the spectral multiplicity be preserved. In this article, it is not our 
intention to dive into the 
conceptualisation of the theory of direct integrals for locally Hilbert spaces in this 
subsection (an attempt is already made in \cite{KulkarniPamula} for the case of Abelian 
locally von Neumann algebras) and for this reason we keep this subsection to a minimum.

With notations and assumptions as in Theorem~\ref{t:st2} and its proof, assume that all Hilbert 
spaces $\cH_\lambda$ are separable. Since $\Lambda$ is countable, it follows that both the locally Hilbert 
space $\cH$ and its completion $\widetilde \cH$ are separable. For arbitrary but 
fixed $\lambda\in \Lambda$ and any $n\in\NN\cup\{\infty\}$, consider the measure space
$(X_{\lambda,n},\Omega_{\lambda,n},\mu_{\lambda_n})$ constructed during the proof Statement S in the proof 
of Theorem~\ref{t:st2} and note that, due to the separability of 
$\cH_\lambda$, e.g.\ see 
Theorem~4.4.3 in \cite{BratelliRobinson}, it is a \emph{standard measure space}, that is, modulo a subset of null 
measure, $X_{\lambda,n}$ is a Polish space (complete, separable, metric space) and $\Omega_{\lambda,n}$ is 
a Borel $\sigma$-algebra (that is, it contains all open subsets of $X_{\lambda,n}$). 
Moreover, let $(X_\lambda,\Omega_\lambda,\mu_\lambda)$ be the disjoint sum of the family of measure spaces
$\{(X_{\lambda,n},\Omega_{\lambda,n},\mu_{\lambda_n})\}_{n\in\NN\cup\{\infty\}}$, as we explain in the 
following. First, let $X_\lambda$ be defined by the disjoint union of all these sets,
\begin{equation}\label{e:xelas} X_\lambda:= X_{\lambda,\infty}\sqcup
\bigsqcup_{n=1}^\infty X_{\lambda,n}.
\end{equation} 
On $X_\lambda$ we define the $\sigma$-algebra $\Omega_\lambda$ in the following way: an arbitrary set 
$S\subseteq X_\lambda$ belongs to $\Omega_\lambda$ if and only if 
$S\cap X_{\lambda,n}\in \Omega_{\lambda,n}$, for all $n\in \NN\cup\{\infty\}$. 
On $X_\lambda$ we can also define the metric, modulo a subset of null measure, by the Fr\'echet construction. 
Then it can be seen that any open subset of $X_\lambda$ is measurable.
The measure $\mu_\lambda$ on $(X_\lambda,\Omega_\lambda)$ is defined in the natural fashion: 
for any $S\in \Omega_\lambda$ let
\begin{equation}\label{e:melas}
\mu_\lambda(S):= \mu_{\lambda,\infty}(S\cap X_{\lambda,\infty})
+ \sum_{n=1}^\infty \mu_{\lambda,n}(S\cap X_{\lambda,n}).
\end{equation}
Then we can see that the measure space $(X_\lambda,\Omega_\lambda,\mu_\lambda)$ is a standard measure 
space.

In view of Definition~4.4.1A and Theorem~4.4.2 in \cite{BratelliRobinson}, viewing  the spaces 
$L^2(X_{\lambda,n},\mu_{\lambda,n})$ as in Subsection~\ref{ss:crlhs},
letting
\begin{equation}\cG_\lambda(x):= \ell^2_n\otimes L^2(X_{\lambda,n},\mu_{\lambda,n}),\quad x\in X_\lambda,
\end{equation}
where $n\in \NN\cup[\{\infty\}$ is uniquely determined such that $x\in X_{\lambda,n}$,
the Hilbert space 
\begin{equation}\label{e:dirint}
\int_{X_\lambda}^\oplus \cG_\lambda(x)\de\mu_\lambda(x):= \bigl(\ell^2_\infty\otimes L^2(X_{\lambda,\infty},
\mu_{\lambda,\infty})\bigr)
\oplus \bigoplus_{n\geq 1} \bigl(\ell^2_n\otimes L^2(X_{\lambda,n},\mu_{\lambda,n})\bigr)
 \end{equation}
is a direct integral of Hilbert spaces (compare with 
\cite{BratelliRobinson} pp.~ 443). Let us observe that the net 
$(X_\lambda,\Omega_\lambda,\mu_\lambda)_{\lambda\in\Lambda}$ is a strictly inductive system of standard
measure spaces and, consequently, we can define its inductive limit $(X,\Omega,\mu)$ as in 
Subsection~\ref{ss:sismes}.
Also, we observe that, for each $x\in X$, $(\cG_\lambda(x))_{\lambda\in\Lambda}$ is a strictly inductive system of Hilbert spaces and 
hence we can define the locally Hilbert spaces
\begin{equation}
\cG(x):=\varinjlim_{\lambda\in\Lambda} \cG_\lambda(x),\quad x\in X.
\end{equation}

Since the net of the Hilbert spaces from 
the right hand side of \eqref{e:dirint}, indexed by $\lambda\in\Lambda$, is a strictly inductive system of Hilbert spaces, 
it follows that we can define the \emph{direct integral locally Hilbert 
space} induced by this system
\begin{align}
\int_{X}^{\oplus,\mathrm{loc}} \cG(x)\de\mu(x) & := \varinjlim_{\lambda\in\Lambda} 
\int_{X_\lambda}^\oplus \cG_\lambda(x)\de\mu_\lambda(x) \nonumber\\
& = \varinjlim_{\lambda\in\Lambda}
\bigl(\ell^2_\infty\otimes L^2(X_{\lambda,\infty},
\mu_{\lambda,\infty})\bigr)
\oplus \bigoplus_{n\geq 1} \bigl(\ell^2_n\otimes L^2(X_{\lambda,n},\mu_{\lambda,n})\bigr).\label{e:dirintloc}
\end{align}

Now, for each $\lambda\in\Lambda$ and $n\in \NN\cup\{\infty\}$, let 
$\phi_{\lambda,n}\in L^\infty(X_{\lambda,n},\mu_{\lambda,n})$ be defined by \eqref{e:nela}. Then, in view 
of \eqref{e:fela} it follows that, for each $\lambda\in\Lambda$, 
we can get $\phi_\lambda\in L^\infty(X_\lambda,\mu_\lambda)$ such that 
$\phi_\lambda|_{X_{\lambda,n}}=\phi_{\lambda,n}$ for all $n\in\NN\cup\{\infty\}$. This enables us to define the 
scalar operator $S_{\phi_\lambda}(x)$ on $\cG_{\lambda}(x)$, for all $x\in X_\lambda$, of multiplication with
$\phi_\lambda(x)$. Then, we get the net of \emph{diagonalisable} operators on the direct integral
Hilbert space defined at \eqref{e:dirint}
\begin{equation*}
\int_{X_\lambda}^\oplus S_{\phi_\lambda}(x)\de\mu_\lambda(x),\quad \lambda\in\Lambda,
\end{equation*}
which turns out to define, via Proposition~\ref{p:lbo2}, a \emph{diagonalisable} locally bounded operator
on the direct integral locally Hilbert space defined at \eqref{e:dirintloc}
\begin{equation}\label{e:diag}
\int_X^{\oplus,\mathrm{loc}} S_\phi(x)\de\mu(x):= \varprojlim_{\lambda\in\Lambda}
\int_{X_\lambda} ^\oplus S_{\phi_\lambda}(x)\de\mu_\lambda(x).
\end{equation}
Finally, an argument similar to the one 
used in order to obtain \eqref{e:neu} shows that we can get a locally unitary operator 
\begin{equation*}
U\colon \cH\ra \int_{X}^{\oplus,\mathrm{loc}} \cG(x)\de\mu(x)
\end{equation*}
such that
\begin{equation*}
N=U^* \int_X^{\oplus,\mathrm{loc}} S_\phi(x)\de\mu(x)\,U.
\end{equation*}

Summing up the reasoning and the formulae from above we have.

\begin{theorem}\label{t:st4}
Assume that the directed set $\Lambda$ is sequentially finite.
Let $N\in\cB_\mathrm{loc}(\cH)$ be a locally normal operator on the representing locally Hilbert space
$\cH=\varinjlim_{\lambda\in\Lambda}\cH_\lambda$ subject to the condition that all Hilbert spaces $\cH_\lambda$ 
are separable. Then there exists a strictly inductive net 
$(X_\lambda,\Omega_\lambda,\mu_\lambda)_{\lambda\in\Lambda}$ of 
standard measure spaces with respect to which $N$ is locally unitarily equivalent to a diagonalisable locally 
bounded operator described at \eqref{e:dirintloc} on a direct integral locally Hilbert space described at 
\eqref{e:diag}.
\end{theorem}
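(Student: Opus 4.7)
The plan is to build on Theorem~\ref{t:st2} and reinterpret the pseudo-concrete functional model as a direct integral, using separability to pass from merely locally finite measure spaces to standard measure spaces, which is the proper setting for direct integral decompositions.

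First, I would invoke Theorem~\ref{t:st2} to fix, for every $n\in\NN\cup\{\infty\}$, a strictly inductive system of locally finite measure spaces $(X_{\lambda,n},\Omega_{\lambda,n},\mu_{\lambda,n})_{\lambda\in\Lambda}$, a function $\phi_n\in L^\infty_{\mathrm{loc}}(X_n,\mu_n)$ with $\sup_{n\geq 1}\|\phi_n|_{X_{\lambda,n}}\|_\infty<\infty$, and a locally unitary operator $U$ implementing the claimed equivalence. Since each $\cH_\lambda$ is separable and $\Lambda$ is countable (because it is sequentially finite), each Hilbert space that appears in the construction of Statement~S in the proof of Theorem~\ref{t:st2} is separable, and so, by Theorem~4.4.3 in \cite{BratelliRobinson}, each $(X_{\lambda,n},\Omega_{\lambda,n},\mu_{\lambda,n})$ can be taken to be a standard measure space.

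Next, for each $\lambda\in\Lambda$ I would form the disjoint union $(X_\lambda,\Omega_\lambda,\mu_\lambda)$ exactly as at \eqref{e:xelas}--\eqref{e:melas}, which remains standard because the family is countable and a countable disjoint union of Polish spaces is Polish (via the Fr\'echet metric construction). One must then verify that $(X_\lambda,\Omega_\lambda,\mu_\lambda)_{\lambda\in\Lambda}$ is a strictly inductive system of measure spaces in the sense of Subsection~\ref{ss:sismes}; this reduces to checking that the inclusions $X_{\lambda,n}\subseteq X_{\nu,n}$ for $\lambda\leq\nu$ assemble coherently across the components, which follows from the inductive construction in Theorem~\ref{t:st2}. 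Then I would define the fibers $\cG_\lambda(x):=\ell^2_n\otimes L^2(X_{\lambda,n},\mu_{\lambda,n})$ for $x\in X_{\lambda,n}$ and observe that, because the decomposition on $X_\lambda$ refines (or rather, is compatible with) the one on $X_\nu$ whenever $\lambda\leq\nu$, the spaces $(\cG_\lambda(x))_{\lambda\in\Lambda}$ form a strictly inductive system of Hilbert spaces for each $x\in X:=\bigcup_\lambda X_\lambda$, giving locally Hilbert fibers $\cG(x)=\varinjlim_\lambda \cG_\lambda(x)$.

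I would then organise the direct integral locally Hilbert space at the level of formula \eqref{e:dirintloc} by taking the inductive limit of the direct integral Hilbert spaces $\int_{X_\lambda}^\oplus\cG_\lambda(x)\de\mu_\lambda(x)$, which by \eqref{e:dirint} is canonically identified with the $\lambda$-th term of the strictly inductive system appearing in Theorem~\ref{t:st2}. Similarly, using $\phi_\lambda\in L^\infty(X_\lambda,\mu_\lambda)$ obtained by gluing the $\phi_{\lambda,n}$, I would define the fiberwise scalar operator $S_{\phi_\lambda}(x)$ on $\cG_\lambda(x)$ and assemble the net of diagonalisable operators $\int_{X_\lambda}^\oplus S_{\phi_\lambda}(x)\de\mu_\lambda(x)$. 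Checking that this net satisfies \eqref{e:temuha} of Proposition~\ref{p:lbo2} is immediate from the construction, and so the projective limit yields a diagonalisable locally bounded operator on the direct integral locally Hilbert space as in \eqref{e:diag}. Finally, composing the locally unitary operator of Theorem~\ref{t:st2} with the canonical identifications $T_\lambda$ of each fiber space with the direct integral at level $\lambda$, exactly as in the proof of Theorem~\ref{t:st3}, produces the required locally unitary $U\colon\cH\ra\int_X^{\oplus,\mathrm{loc}}\cG(x)\de\mu(x)$ with $N=U^*\int_X^{\oplus,\mathrm{loc}}S_\phi(x)\de\mu(x)\,U$.

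The main obstacle is not the operator-algebraic content, which is already handled by Theorem~\ref{t:st2}, but the careful bookkeeping needed to justify that the direct integral structure is compatible with the strictly inductive limit: one must verify that the measurable field of fibers $x\mapsto\cG(x)$ is well defined on $X$ (independently of the $\lambda$ used to represent a given point $x$), that the standard-measure-space property passes to $(X_\lambda,\Omega_\lambda,\mu_\lambda)$, and that the inductive limit of direct integrals genuinely deserves to be called a direct integral locally Hilbert space. A fully rigorous conceptualisation of these points is disclaimed in the paper, so in the write-up I would keep the argument at the sketch level, recording the identifications precisely but pointing to \cite{KulkarniPamula} and Subsection~\ref{ss:last} for a more systematic treatment.
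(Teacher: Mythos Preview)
Your proposal is correct and follows essentially the same route as the paper: invoke Theorem~\ref{t:st2}, use separability (via \cite{BratelliRobinson}) to upgrade the measure spaces to standard ones, form the disjoint-union measure spaces \eqref{e:xelas}--\eqref{e:melas}, define the fibers $\cG_\lambda(x)$ and the direct integral locally Hilbert space \eqref{e:dirintloc}, assemble the diagonalisable operator \eqref{e:diag} from the $\phi_{\lambda,n}$, and carry the locally unitary $U$ along. Your honest remarks about the bookkeeping left at sketch level also match the paper's own disclaimer in Subsection~\ref{ss:ttst}.
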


\subsection{Final Comments.}\label{ss:last}
 We have now an almost complete picture of the spectral theory for locally normal operators. The First Spectral 
Theorem, in terms of spectral measures, see Theorem~\ref{t:st1}, 
is the most general, without any assumption on the underlying locally 
Hilbert space or the directed set. The Second Spectral Theorem, in terms of multiplication operators with 
functions which are locally of $L^\infty$ type, see Theorem~\ref{t:st2} and Theorem~\ref{t:st3},
is proven under additional assumptions on the underlying Hilbert space as a representing 
one and the assumption on the directed set $\Lambda$ to be sequentially finite. The first assumption is natural 
and not that restrictive, if we want locally unitarily equivalence, see Remark~\ref{r:repas}. 
But the most disturbing one is the latter assumption on the 
directed set to be locally finite. The obstruction that prevents us from removing this assumption comes from the 
representations of Abelian von Neumann algebras, see Theorem~\ref{t:abvn}. 
To be more precise, a careful examination of the proof of 
Theorem~\ref{t:st2} shows that, by the mathematical induction process following $m\in\NN$ which is
used to prove Statement S, at each step we keep all the locally finite measure spaces 
$(X_{\lambda,n},\Omega_{\lambda,n}, \mu_{\lambda,n})$ built at the previous steps and only construct the new 
ones. If we apply Theorem~\ref{t:abvn} and construct the locally finite measure spaces 
$(X_{\lambda,n},\Omega_{\lambda,n}, \mu_{\lambda,n})$ for arbitrary $\lambda\in\Lambda$ and $n\in\NN$, 
we have at least two problem. First, we have to show that we get a strictly inductive system of locally finite 
measure spaces and second we have to build the locally unitary operator that implements the functional model.
Work remains to be done on the question whether the latter assumption can be weakened or not. 

The Third Spectral Theorem, in terms of direct integrals of locally Hilbert spaces, see Theorem~\ref{t:st4}, 
is obtained under all the previous assumptions plus the separability of the underlying locally Hilbert spaces. The 
assumption on separability is natural, having in mind the restriction required by measure theory. However, the 
assumption on local finiteness of the directed set is imposed in view of the fact that we obtain the 
Third Spectral Theorem as a consequence of the Second Spectral Theorem. In view of the recent attempt to get 
direct integral representations of Abelian locally von Neumann algebras, see \cite{KulkarniPamula}, 
we think that the idea of the proof of 
Theorem~\ref{t:st4} can be used for such a purpose. The advantage of using Theorem~\ref{t:abvn} is that the 
measurability is implicit and what remains to be done is to transfer this construction to measurable bundles of 
Hilbert spaces in a way compatible with the locally space structure. We will make this explicit in a future paper \cite{GheondeaKulkarniPamula}. 

\end{document}